\newtheorem{theorem}{Theorem}[section]
\newtheorem{observation}{Remark}[section]
\newtheorem{proposition}[theorem]{Proposition}
\newtheorem{corolary}[theorem]{Corollary}
\newtheorem{lemma}[theorem]{Lemma}
\newtheorem*{theorem1.2}{Theorem 1.2}
\newtheorem*{theorem1.3}{Theorem 1.3}
\newcommand{\tmop}[1]{\ensuremath{\operatorname{#1}}}
\numberwithin{equation}{section}
\newcommand{\pr}{\mathbb{P}}
\newcommand{\R}{\mathcal{R}}
\newcommand{\s}{\mathcal{S}}
\newcommand{\mathd}{\mathrm{d}}
\newcommand{\ex}{\mathbb{E}}
\begin{document}

\title[Discrepancy bounds for the distribution of the Riemann zeta-function]
{Discrepancy bounds for the distribution of the \\ Riemann zeta-function and applications}

\author[Y. Lamzouri]{Youness Lamzouri}
\address{Department of Mathematics and Statistics, York University,
4700 Keele Street, Toronto,
 ON M3J 1P3, Canada}
\email{lamzouri@mathstat.yorku.ca}

\author[S. Lester]{Stephen Lester}
\address{School of Mathematical Sciences, Tel Aviv University, Tel Aviv 69978, Israel}
\email{slester@post.tau.ac.il}

\author[M. Radziwi\l \l]{Maksym Radziwi\l \l}
\address{School of Mathematics, Institute for Advanced Study, 
1 Einstein Drive, Princeton, NJ 08540 USA}
\email{maksym@ias.edu}

\thanks{The first author is supported in part by an NSERC Discovery grant. The
third author is partially supported by NSF grant DMS-1128155. The research leading to these results has received funding
from the European Research Council under the European Union's
Seventh Framework Programme (FP7/2007-2013) / ERC grant agreement
n$^{\text{o}}$ 320755.}

\date{}

\begin{abstract}
We investigate the distribution of the Riemann zeta-function
on the line $\tmop{Re}(s)=\sigma$.
For $\tfrac12<\sigma\le 1$ we obtain an upper bound on the discrepancy between 
the distribution of $\zeta(s)$
and that of its random model, improving results of Harman and Matsumoto.
Additionally,
we examine the distribution of the extreme values of $\zeta(s)$ inside of the critical
strip, strengthening a previous result of the first author.

As an application of these results we obtain the first effective error term
for the number of solutions to $\zeta(s) = a$ in a strip $\tfrac12 < \sigma_1 <
\sigma_2 < 1$. Previously in the strip $\tfrac 12 < \sigma < 1$ 
only an asymptotic estimate was available due
to a result of Borchsenius and Jessen from 1948 and effective estimates were known only slightly to the left of the half-line, under the Riemann hypothesis (due to Selberg) and to the right of the abscissa of absolute convergence (due to Matsumoto).  
 In general our results are an improvement of the
classical Bohr-Jessen framework and are also applicable to counting the zeros of the
Epstein zeta-function. 
\end{abstract}

\subjclass[2010]{Primary 11M06.}

\keywords{}

\maketitle

\section{Introduction and statement of main results}
Let $\{ X(p) \}_p$ be a sequence of independent random variables
uniformly distributed on the unit circle where 
 $p$ runs over the prime numbers. 
Consider the random Euler product
\[
 \zeta(\sigma, X)
= \prod_p\bigg(1-\frac{X(p)}{p^{\sigma}}\bigg)^{-1},
\] 
which converges almost surely for $\sigma>\tfrac12$.
Due to the unique factorization
of the integers we intuitively 
expect that the functions $p^{-it}$ interact like the independent random variables $X(p)$.
This suggests that $\zeta(\sigma, X)$ should
be a good model for the Riemann zeta-function, and
one may ask: How well does the distribution of $\zeta(\sigma, X)$
approximate that of the Riemann zeta-function?

A theorem of Bohr and Jessen ~\cite{BohrJ}
asserts that $\log \zeta(\sigma+it)$ has a continuous limiting distribution
in the complex plane for $\sigma>\tfrac12$. In fact,
it can be seen from their work that $\log \zeta(\sigma+it)$
converges in distribution to $\log \zeta(\sigma, X)$ for $\sigma>\tfrac12$.
In this article we investigate the discrepancy
between the distributions of the random variable $\log \zeta(\sigma, X)$
and that of $\log \zeta(\sigma+it)$, \textit{i.e.}
\[
D_{\sigma}(T):=\sup_{\mathcal R} \bigg|
\mathbb{P}_{T} \Big (\log \zeta(\sigma + it) \in \mathcal{R} \Big ) - \mathbb P\Big( \log \zeta(\sigma, X) \in \mathcal R \Big) \bigg|,
\]
where the supremum is taken over rectangles $\mathcal R$
with sides parallel to the coordinate axes and where
$$
\mathbb{P}_{T} \Big (f(t) \in \mathcal{R} \Big ) :=
\frac{1}{T} \cdot \text{meas} \big \{ T \leq t \leq 2T: f(t) \in 
\mathcal{R} \big \}. 
$$
This quantity measures the extent to which
the distribution function of the random variable $\log \zeta(\sigma, X)$ 
approximates that of $\log \zeta(\sigma+it)$.
We prove
\begin{theorem} \label{discrepancy thm}
Let $\tfrac12< \sigma<1$ be fixed. Then
\[
D_{\sigma}(T) \ll \frac{1}{(\log T)^{\sigma}}.
\]
Additionally, for $\sigma=1$ we have
\[
D_{1}(T) \ll \frac{\log \log T}{\log T}.
\]
\end{theorem}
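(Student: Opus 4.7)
\medskip

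\noindent\textbf{Proof Proposal.} The plan is to reduce the discrepancy estimate on rectangles to a comparison of characteristic functions via a two--dimensional Esseen / Beurling--Selberg inequality. Writing $\log\zeta(\sigma+it) = U(t) + i V(t)$, for any rectangle $\mathcal R = [a_1,b_1]\times[a_2,b_2]$ one can bound
\[
\bigl| \pr_T(\log\zeta(\sigma+it)\in\mathcal R) - \pr(\log\zeta(\sigma,X)\in\mathcal R)\bigr|
\ll \int_{|u|,|v|\le \Lambda}\!\!\bigl|\Phi_T(u,v)-\Phi_X(u,v)\bigr|\,\frac{\mathd u\,\mathd v}{(1+|u|)(1+|v|)} + \frac{1}{\Lambda},
\]
where $\Phi_T(u,v) = \frac{1}{T}\int_T^{2T}\exp(iu U(t)+ivV(t))\,\mathd t$ and $\Phi_X(u,v)$ is the corresponding characteristic function of the random model. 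I will take $\Lambda \asymp (\log T)^\sigma$ for $\tfrac12<\sigma<1$ and $\Lambda \asymp \log T/\log\log T$ for $\sigma = 1$.

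The random characteristic function factors over primes as an Euler product,
\(
\Phi_X(u,v) = \prod_p \ex\bigl[(1-X(p)p^{-\sigma})^{-(iu+v)/2}(1-\overline{X(p)}p^{-\sigma})^{-(iu-v)/2}\bigr],
\)
which can be expanded as an absolutely convergent Dirichlet series over integers supported on prime powers. The next step is to produce a matching expression on the zeta side. The strategy is to approximate $\log\zeta(\sigma+it)$ by the short sum $P_X(\sigma+it) := \sum_{p^k\le X}\frac{1}{k p^{k(\sigma+it)}}$ for a suitable parameter $X = T^\theta$ with $\theta$ small. Standard techniques (contour shifts using the Hadamard product, or a Selberg--Soundararajan style argument valid without RH in this range) give, for a set of $t\in[T,2T]$ of measure $T(1-o(1))$, an inequality of the form $|\log\zeta(\sigma+it) - P_X(\sigma+it)| \le \eta$, with moment control on the exceptional set.

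Once this approximation is in place, one shows $\Phi_T(u,v) \approx \frac{1}{T}\int_T^{2T}\exp(iu\,\tmop{Re}P_X + iv\,\tmop{Im}P_X)\,\mathd t$. Expanding the exponential as a truncated Taylor series and using the orthogonality relation $\frac{1}{T}\int_T^{2T}(n/m)^{it}\,\mathd t = \delta_{n=m}+O(1/(T\log(n/m)))$ for integers $n,m$ built from prime powers $\le X$, the integral over $t$ matches the expectation over $X(p)$ term by term, with an error coming from off--diagonal contributions of size roughly $X^{A}/T$ for some fixed $A$ depending on the truncation. Choosing $\theta$ sufficiently small ensures this error is negligible.

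The main obstacle is the joint calibration of three parameters: the cutoff $X$ in the prime sum, the Taylor truncation order $K$, and the Esseen cutoff $\Lambda$. The Taylor truncation produces a term of the form $(|u|+|v|)^K\ex|P_X|^K/K!$, which for $|u|,|v|\le \Lambda$ must be small; this forces $K \gg \Lambda^2\log\log T / \log T$, and simultaneously $X^K$ must not exceed a small power of $T$ so that off--diagonal errors vanish. The arithmetic factor $\sum_{p\le X}p^{-2\sigma}\asymp X^{1-2\sigma}/\log X$ (for $\tfrac12<\sigma<1$) yields, after optimization, the bound $\Lambda^{-1}\ll(\log T)^{-\sigma}$, while for $\sigma=1$ the sum is $\log\log X$ and the optimization gives $\Lambda^{-1}\ll \log\log T/\log T$. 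The contribution from the exceptional set (where the Dirichlet polynomial fails to approximate $\log\zeta$) is controlled by a high--moment bound for $\log\zeta(\sigma+it)-P_X(\sigma+it)$, which I expect to be the most technically delicate ingredient; this is where the improvement over Harman and Matsumoto should come from.
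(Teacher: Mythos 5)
Your overall framework — reduce to comparing characteristic functions via a two-dimensional Esseen/Beurling--Selberg inequality with cutoff $\Lambda\asymp(\log T)^{\sigma}$, approximate $\log\zeta(\sigma+it)$ by a short Dirichlet polynomial, then Taylor-expand and use orthogonality of $p^{-it}$ to match the random model term by term — is the correct one and aligns with the paper's approach. But your quantitative plan does not close, for two related reasons. First, the choice $X=T^{\theta}$ with fixed $\theta>0$ is incompatible with your own constraints: the off-diagonal error in the mixed moments is of size $X^{j+\ell}/\sqrt{T}$ (this is Tsang's lemma, Lemma \ref{transition lemma} here), so $X^{K}\ll T^{1/4}$ forces the Taylor truncation order $K\ll 1/\theta$, a bounded constant; yet your own heuristic (and any variant of it) requires $K\to\infty$ with $T$. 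The paper instead takes $Y=(\log T)^{A}$, so that $\log Y\asymp\log\log T$ and one may push $K$ as far as $\log T/\log\log T$ while keeping $Y^{2K}\ll T^{1/4}$. Second, and more fundamentally, you never address how to control the \emph{tail} of the Taylor series on the zeta side, i.e.\ $\sum_{n> K}\frac{|z|^{n}}{n!}\cdot\frac{1}{T}\int_{T}^{2T}|R_{Y}(\sigma+it)|^{n}\,dt$. The $T$-averaged moment bound (Lemma \ref{large sieve}) only holds for $n\ll\log T/\log\log T$, which is exactly where you must truncate; beyond that one has no moment control, and the trivial pointwise bound $\|R_{Y}\|_{\infty}\asymp Y^{1-\sigma}/\log Y$ is far too large to make the tail small when $|z|\asymp(\log T)^{\sigma}$.

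The paper's key innovation — stated explicitly after Proposition \ref{complex} — is to first remove the set where $|R_{Y}(\sigma+it)|>(\log T)^{1-\sigma}/\log\log T$ (a set of measure $T\exp(-c\log T/\log\log T)$, by Lemma \ref{poly large deviation}). On the complementary set $\mathcal{A}(T)$ the Taylor tail is bounded \emph{pointwise}, since there $|z|\,|R_{Y}(\sigma+it)|\ll\log T/\log\log T$, so truncating at $N\asymp\log T/\log\log T$ kills the tail; the error from removing the small exceptional set is controlled separately via Cauchy--Schwarz and the moment bounds in the valid range. Without this device, the paper notes that the admissible range of $|z|$ drops from $(\log T)^{\sigma}$ to $(\log T)^{2\sigma-1}$, and one recovers only the older Harman--Matsumoto-type exponent. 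So while you have the right architecture, the argument as written is missing the one idea that actually produces the exponent $\sigma$, and the parameter choice $X=T^{\theta}$ would need to be replaced by $Y=(\log T)^{A}$ for the off-diagonal terms and the truncation order to be jointly feasible. (A small further slip: for $\tfrac12<\sigma<1$ the sum $\sum_{p\le X}p^{-2\sigma}$ converges, so it is $O(1)$, not $\asymp X^{1-2\sigma}/\log X$; that expression is the \emph{tail} $\sum_{p>X}p^{-2\sigma}$. This does not by itself break the argument but suggests the variance computation feeding your ``$K\gg\Lambda^{2}\log\log T/\log T$'' heuristic is off.)
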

Theorem \ref{discrepancy thm} improves upon a previous discrepancy estimate
due to G. Harman and K. Matsumoto ~\cite{HM}.  For fixed $\tfrac12 < \sigma  \le 1$ they showed that the discrepancy
satisfies the bound
\[
D_{\sigma}(T) \ll \frac{1}{(\log T)^{(4\sigma-2)/(21+8\sigma)-\varepsilon}}
\]
for any $\varepsilon>0$. One new feature of our estimate
is that the power of the logarithm does not decay to zero as
$\sigma \rightarrow \tfrac 12$. We introduce a different technique to study this problem
that relies upon careful analysis of large complex moments
of the Riemann zeta-function inside of the critical strip. 
Some of the tools developed by A. Selberg to study the distribution
 of  $\log \zeta(\tfrac12+it)$ are also used, such as Beurling-Selberg functions.

An important problem in the theory of the Riemann zeta-function
is to understand its maximal
order within the critical strip. The Riemann hypothesis implies that for $\tfrac12 <\sigma<1$ and $t$ large we have $\log|\zeta(\sigma+it)|\ll (\log t)^{2-2\sigma+o(1)}$ (see Theorem 14.5 of \cite{Titchmarsh}). On the other hand, Montgomery \cite{Montgomery} proved that $\log|\zeta(\sigma+it)|= \Omega\left((\log t)^{1-\sigma+o(1)}\right)$, and based on a probabilistic argument, he conjectured that this omega result is in fact optimal, namely that $\log|\zeta(\sigma+it)|\ll (\log t)^{1-\sigma+o(1)}.$
This motivates the study of the extent  to which the
extreme values of $\zeta(\sigma+it)$ can be modeled
by those of the random variable $\zeta(\sigma,X)$. 
For if the distribution of the extreme values of $\zeta(\sigma+it)$ matches 
that of $\zeta(\sigma,X)$ in the viable range then Montgomery's conjecture
 follows.

In \cite{LamzouriLarge}
 the first author obtained an asymptotic estimate for
$
\log \mathbb{P}_{T}(\log |\zeta(\sigma + it)| > \tau)
$
in nearly the full conjectured range of $\tau$. More precisely, he showed that there is a positive constant $A(\sigma)$, such that
uniformly in the range $\tau\ll (\log T)^{1-\sigma+o(1)}$, we have 
\begin{equation}\label{large deviation asymptotic}
\begin{aligned}
\log\mathbb{P}_{T} \Big (\log |\zeta(\sigma + it)| > \tau \Big )
&=(1+o(1))\log\mathbb{P} \Big ( \log |\zeta(\sigma, X)| > \tau \Big)\\
&=\left(-A(\sigma)+o(1)\right)\tau^{\frac{1}{1-\sigma}}(\log \tau)^{\frac{\sigma}{1-\sigma}}.
\end{aligned}
\end{equation}
 We strengthen this result, obtaining an asymptotic formula for
$\mathbb{P}_{T}(\log |\zeta(\sigma + it)| > \tau)$ in the same
range. 
\begin{theorem}  \label{large thm}
Let $\tfrac12 < \sigma<1$ be fixed.
There exists a constant $b(\sigma)>0$
such that for $3 \leq \tau \leq b(\sigma) (\log T)^{1-\sigma}(\log
\log T)^{1-\frac{1}{\sigma}}$ we have
\begin{equation*}
\mathbb{P}_{T} \Big (\log |\zeta(\sigma + it)| > \tau \Big )
= \mathbb{P} \Big ( \log |\zeta(\sigma, X)| > \tau \Big )
\times \bigg (1 + O\bigg ( \frac{(\tau \log \tau)^{\frac{\sigma}{1 - \sigma}}
\cdot (\log\log T)}{(\log T)^{\sigma}} \bigg ) \bigg ).
\end{equation*}
Moreover, the same asymptotic estimate holds when $\log|\zeta(\sigma+it)|$ and $\log|\zeta(\sigma,X)|$ are replaced by $\arg\zeta(\sigma+it)$ and $\arg\zeta(\sigma, X)$ respectively.
\end{theorem}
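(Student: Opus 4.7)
The plan is to combine a sharp saddle-point analysis of the random model with a comparison of complex moment generating functions transferred from the techniques behind Theorem~\ref{discrepancy thm}. It is important to note first that Theorem~\ref{discrepancy thm} alone does \emph{not} suffice: in the regime of interest $\mathbb P(\log|\zeta(\sigma,X)|>\tau)$ can be much smaller than $(\log T)^{-\sigma}$, so the absolute discrepancy $D_\sigma(T)$ swamps the quantity we wish to estimate, and the comparison must be multiplicative in nature.

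First I would derive a sharp asymptotic for the random model's tail. By independence, the moment generating function
$$M(s) := \mathbb E\bigl[|\zeta(\sigma,X)|^s\bigr] = \prod_p \mathbb E\bigl[|1-X(p)p^{-\sigma}|^{-s}\bigr]$$
admits an explicit large-$s$ asymptotic, obtained by analyzing each local factor and summing over $p$. A Laplace inversion with saddle located at some $s^*(\tau)\asymp \tau^{\sigma/(1-\sigma)}(\log\tau)^{1/(1-\sigma)}$ then yields the precise tail behaviour, sharpening the log-asymptotic~\eqref{large deviation asymptotic} to an honest equivalent with explicit lower-order corrections.

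Next the analogous saddle-point evaluation must be transferred to the $t$-average $M_T(s) := T^{-1}\int_T^{2T}|\zeta(\sigma+it)|^s\,dt$ and its continuation to complex $s$. The crucial input is a comparison $M_T(s) = M(s)(1+o(1))$, uniform on the relevant contour of Laplace inversion passing through $s^*(\tau)$, valid precisely when $\tau$ lies below $b(\sigma)(\log T)^{1-\sigma}(\log\log T)^{1-1/\sigma}$. This large complex moment comparison uses the same circle of ideas as Theorem~\ref{discrepancy thm}: approximation of $\log\zeta(\sigma+it)$ by a short Dirichlet polynomial, Weyl-type equidistribution of $\{(\log p)/2\pi\}_{p\le Y}$ modulo $1$, and Beurling--Selberg majorants/minorants to sharpen the distributional matching. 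Carrying the saddle-point computation through $M_T$ then yields the stated asymptotic for $\mathbb P_T$, with the relative error inherited directly from the MGF comparison. The treatment of $\arg\zeta(\sigma+it)$ is parallel: one replaces $|1-X(p)p^{-\sigma}|^{-s}$ by $\exp\bigl(s\arg(1-X(p)p^{-\sigma})^{-1}\bigr)$ in the MGF, the local factors remain bounded and nearly independent, and the same saddle-point scheme applies verbatim.

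The main obstacle is precisely the complex moment comparison $M_T(s) \approx M(s)$ for $|s|$ as large as $s^*(\tau)$ at the top of the admissible range. The restriction $\tau \le b(\sigma)(\log T)^{1-\sigma}(\log\log T)^{1-1/\sigma}$ is dictated by this step: at this threshold the saddle $s^*$ sits at the boundary beyond which the short-Euler-product approximation and the equidistribution estimates can no longer deliver the required relative precision $(\tau\log\tau)^{\sigma/(1-\sigma)}(\log\log T)/(\log T)^{\sigma}$. Making the dependence on $s$ in the moment comparison sharp enough to survive the Laplace inversion -- in particular controlling contributions from $s$ away from the real axis along the contour -- is the delicate technical heart of the argument.
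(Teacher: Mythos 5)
Your high-level architecture is the right one and coincides with the paper's: a sharp saddle-point evaluation of $\mathbb{P}(\log|\zeta(\sigma,X)|>\tau)$ via Laplace inversion of the moment generating function (this is exactly Proposition 7.1), a complex-moment comparison transferred from the same toolbox as Theorem 1.1, and a smooth Perron-type majorant/minorant to convert the moment comparison into a tail comparison. You also correctly observe that a multiplicative error is essential since $D_\sigma(T)$ is far larger than the target probability. However, there is a genuine gap at the step you yourself flag as ``the delicate technical heart,'' namely the claim $M_T(s) := T^{-1}\int_T^{2T}|\zeta(\sigma+it)|^s\,dt = M(s)(1+o(1))$ along the inversion contour through $s^*$. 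Unconditionally this cannot hold in the required range $|s|\asymp(\log T)^\sigma$: when $\operatorname{Re}(s)$ is that large, $M_T(s)$ is dominated by the rare $t$ where $|\zeta(\sigma+it)|$ is exceptionally large, and absent RH there is no pointwise bound on $|\zeta|$ sharp enough to tame this contribution. Indeed the authors point this out explicitly when discussing their moment result: the naive comparison only works in the much shorter range $|s|\ll(\log T)^{2\sigma-1}$, which would degrade your range of $\tau$ to $\tau\ll(\log T)^{(2\sigma-1)(1-\sigma)/\sigma}$ and lose the theorem as $\sigma\to\tfrac12$.

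The missing idea is the excision of a small exceptional set. The paper proves (Theorem 2.2, built on Proposition 2.3) that $T^{-1}\int_{[T,2T]\setminus\mathcal{E}(T)}|\zeta(\sigma+it)|^s\,dt = M(s) + O(M(\operatorname{Re} s)/(\log T)^A)$ with $\operatorname{meas}(\mathcal{E}(T))\ll T\exp(-b\log T/\log\log T)$, where $\mathcal{E}(T)$ removes the $t$ near zeros of $\zeta$ (so that the short Dirichlet polynomial approximation holds) and, crucially, the $t$ for which the short polynomial $R_Y(\sigma+it)$ exceeds $(\log T)^{1-\sigma}/\log\log T$. It is precisely this second constraint --- absent from your plan --- that upgrades the admissible $|s|$ from $(\log T)^{2\sigma-1}$ to $(\log T)^\sigma$ and hence yields the full range of $\tau$. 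The Perron-type inversion is then applied to the truncated moments, and one must check at the end that $\operatorname{meas}(\mathcal{E}(T))/T$ is negligible against $\mathbb{P}(\log|\zeta(\sigma,X)|>\tau)$ in the claimed range --- this is what actually fixes the upper endpoint for $\tau$. Two small corrections: the saddle sits at $\kappa\asymp\tau^{\sigma/(1-\sigma)}(\log\tau)^{\sigma/(1-\sigma)}$ (your exponent on $\log\tau$ was $1/(1-\sigma)$), and the inversion kernel used here is the $((e^{\lambda s}-1)/(\lambda s))^N$ smoothing of Perron's formula rather than a Beurling--Selberg function, though the two serve the same majorant/minorant purpose.
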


The terms $(\log T)^{\sigma}$ appearing in the error term in Theorem \ref{large thm} and
in Theorem \ref{discrepancy thm} are related. An improvement in our method would produce an improvement in both
results. Since we do not believe that we will be able to extend significantly the range of 
Theorem \ref{large thm}, it seems that our bound for $D_{\sigma}(T)$ is as well optimal given the method used. 

We also apply Theorem \ref{discrepancy thm} to study
the roots, $s$,
to the equation $\zeta(s)=a$ where $a$ is a nonzero complex number.
These points are known as $a$-points and the study of their distribution
is a classical topic in the theory of the Riemann zeta-function.

Let $N_a(\sigma_1,\sigma_2; T)$ be the number of $a$-points in the strip 
$\tfrac12 < \sigma_1<\sigma<\sigma_2<1$, $T\le t\le 2T$.
In 1948 Borchsenius and Jessen
 \cite{BorchseniusJessen}
proved that there exists a constant $c(a, \sigma_1,\sigma_2)>0$ such that
as $T \rightarrow \infty$
\begin{equation}\label{BorchseniusJessen}
N_a(\sigma_1,\sigma_2; T)\sim c(a, \sigma_1,\sigma_2)T.
\end{equation}
The constant $c(a, \sigma_1,\sigma_2)$ can be explicitly given in terms of the random variable
 $\zeta(\sigma,X)$. Indeed, let
\[
f_a(\sigma)=\mathbb E\Big( \log| \zeta(\sigma, X)-a| \Big).
\]
Then,
\[
c(a,\sigma_1,\sigma_2)=\frac{f_a'(\sigma_2)-f_a'(\sigma_1)}{2\pi}.
\]
The differentiability of $f_a(\sigma)$ is not trivial, and was established
by Borchsenius and Jessen. 

Using Theorem \ref{discrepancy thm} we obtain the first effective error term for
$N_a(\sigma_1,\sigma_2; T)$ valid for $\sigma_1 < \sigma_2$ in the
critical strip.
\begin{theorem} \label{apoint thm}
Let $\tfrac12<\sigma_1<\sigma_2<1$.
For every nonzero complex number $a$
there exists a constant $c(a, \sigma_1,\sigma_2)>0$
such that
\[
N_a(\sigma_1,\sigma_2; T)=c(a,\sigma_1,\sigma_2) T+O\bigg(T \cdot \frac{\log \log T}{(\log T)^{\sigma_1/2}} \bigg).
\]
\end{theorem}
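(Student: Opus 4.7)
The plan is to derive the counting asymptotic by applying Littlewood's lemma to $\zeta(s)-a$, interpreting the resulting mean-value integrals via Theorem \ref{discrepancy thm}, and recovering the counting function itself from a monotone finite-difference inequality. Concretely, fix $c>1$ large enough that $\zeta(s)-a$ is zero-free for $\tmop{Re}(s)\ge c$, and apply Littlewood's lemma on the rectangle with corners $\sigma_0+iT$, $c+iT$, $c+2iT$, $\sigma_0+2iT$, using $\arg\zeta(\sigma+iT)\ll\log T$ to absorb the horizontal contributions (after a small perturbation of $\sigma_0$ and of the heights to keep the boundary zero-free). Combining this with the identity $\sum_{\sigma_0<\beta<c}(c-\beta)=\int_{\sigma_0}^c N_a(u,c;T)\,\mathd u$ produces
\[
2\pi \int_{\sigma_0}^c N_a(u,c;T)\,\mathd u = \int_T^{2T}\log|\zeta(\sigma_0+it)-a|\,\mathd t - \int_T^{2T}\log|\zeta(c+it)-a|\,\mathd t + O(\log T).
\]

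Next, I would use Theorem \ref{discrepancy thm} to prove a mean-value estimate of the form
\[
\frac{1}{T}\int_T^{2T}\log|\zeta(\sigma+it)-a|\,\mathd t = f_a(\sigma) + O\!\left(\frac{(\log\log T)^{O(1)}}{(\log T)^{\sigma}}\right)
\]
uniformly for $\sigma\in[\sigma_1,\sigma_2]$. Writing the integrand as $\phi(\log\zeta(\sigma+it))$ with $\phi(w)=\log|e^w-a|$, this reduces via a Koksma-type argument to bounding $\int\phi\,\mathd(\mu_T-\mu_X)$, where $\mu_T$ is the empirical distribution of $\log\zeta(\sigma+it)$ and $\mu_X$ is the law of $\log\zeta(\sigma,X)$. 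The difficulty is the logarithmic singularity of $\phi$ at $w_0=\log a$: I would split $\phi=\phi_{\text{smooth}}+\phi_{\text{sing}}$ by truncating to a disk of radius $\epsilon$ about $w_0$. The smooth part is controlled by Theorem \ref{discrepancy thm} at a cost of a power of $\log(1/\epsilon)$ (from the total variation of the truncated $\phi$), while the singular part is controlled by the anti-concentration bound $\mu_T(B(w_0,r))+\mu_X(B(w_0,r))\ll r^2$ --- the $\mu_X$ bound follows from smoothness of the random-model density, and the $\mu_T$ bound is transferred via Theorem \ref{discrepancy thm} --- integrated against $\log(1/r)$ to give $O(\epsilon^2\log(1/\epsilon))$. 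Optimizing $\epsilon$ produces the displayed error.

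To recover $N_a(\sigma_0,c;T)$ from the integrated quantity $H(\sigma_0):=\int_{\sigma_0}^c N_a(u,c;T)\,\mathd u$, I exploit the monotonicity of $u\mapsto N_a(u,c;T)$:
\[
N_a(\sigma_0+\delta,c;T) \le \frac{H(\sigma_0-\delta)-H(\sigma_0+\delta)}{2\delta} \le N_a(\sigma_0-\delta,c;T).
\]
Combining the Littlewood identity, the mean-value estimate, and a Taylor expansion of $f_a$ (smooth on $(\tfrac12,1)$ by Borchsenius--Jessen), the middle quotient equals $-\tfrac{T}{2\pi}f_a'(\sigma_0) + O(T\delta^2) + O(T(\log\log T)^{O(1)}/(\delta(\log T)^{\sigma_0}))$. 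Applying the sandwich also at the shifted centers $\sigma_0-\delta$ and $\sigma_0+\delta$ (and using Lipschitz continuity of $f_a'$) yields matching upper and lower bounds on $N_a(\sigma_0,c;T)$, with a gap of $O(T\delta)$. Choosing $\delta\asymp(\log T)^{-\sigma_1/2}$ (up to logarithmic adjustments) then gives $N_a(\sigma_0,c;T) = -\tfrac{T}{2\pi}f_a'(\sigma_0) + O(T\log\log T/(\log T)^{\sigma_0/2})$. Evaluating at $\sigma_0=\sigma_1,\sigma_2$ and subtracting produces the theorem with $c(a,\sigma_1,\sigma_2)=(f_a'(\sigma_2)-f_a'(\sigma_1))/(2\pi)$.

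The main obstacle is the mean-value estimate of Step~2: converting the rectangular discrepancy of Theorem \ref{discrepancy thm} into an integrated bound against $\phi(w)=\log|e^w-a|$ despite its logarithmic singularity, with only a polylogarithmic loss. The crucial technical input is a quantitative anti-concentration estimate for $\log\zeta(\sigma,X)$ near $w_0=\log a$; once verified in the random model, Theorem \ref{discrepancy thm} propagates it to the zeta side and the balance of errors goes through.
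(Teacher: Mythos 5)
Your Steps~1 and~3 --- Littlewood's lemma followed by the monotone finite-difference sandwich with $h\asymp(\log T)^{-\sigma/2}$ --- coincide with the paper's argument, and the constant $c(a,\sigma_1,\sigma_2)=(f_a'(\sigma_2)-f_a'(\sigma_1))/(2\pi)$ is the right one. The gap is in Step~2, and specifically in the claimed anti-concentration estimate $\mu_T(B(w_0,r))\ll r^2$ with the $\mu_T$-bound ``transferred via Theorem~\ref{discrepancy thm}.'' What the discrepancy bound actually gives is
\[
\mu_T\big(B(w_0,r)\big) \;\ll\; \mu_X\big(B(w_0,r)\big) + D_\sigma(T) \;\ll\; r^2 + \frac{1}{(\log T)^\sigma},
\]
and the additive error $(\log T)^{-\sigma}$ does not vanish as $r\to 0$. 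Consequently the singular contribution
\[
\int_{B(w_0,\epsilon)} \log\frac{1}{|w-w_0|}\, d\mu_T(w) \;=\; \int_0^\infty \mu_T\big(\{ w\in B(w_0,\epsilon): |w-w_0| < e^{-s}\}\big)\,ds
\]
is not controlled by this bound: the tail $\int_{\log(1/\epsilon)}^\infty \big(e^{-2s}+(\log T)^{-\sigma}\big)\,ds$ diverges. The discrepancy bound, being a sup-norm estimate on the CDF, cannot rule out that a set of measure comparable to $D_\sigma(T)$ carries $\log\zeta(\sigma+it)$ arbitrarily close to $\log a$, and on such a set your singular integral would be unbounded.

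This is exactly the issue the paper flags and resolves with a separate ingredient, Proposition~\ref{apoint prop}: a uniform $L^{2k}$ moment bound $\tfrac1T\int_T^{2T}\big|\log|\zeta(\sigma+it)-a|\big|^{2k}\,dt\ll (Ck)^{4k}$, proved via Landau's/Guo's lemma and Jensen's formula. Combined with H\"older (taking $k\asymp\log\log T$) and the discrepancy bound for the \emph{measure} of the excluded set, this controls the contribution from $t$ with $\log\zeta(\sigma+it)$ near $\log a$ and yields the error $O\big((\log\log T)^2/(\log T)^{\sigma}\big)$ in Theorem~\ref{aux}. Without something like Proposition~\ref{apoint prop} --- a uniform integrability estimate for $\log|\zeta(\sigma+it)-a|$ that is independent of the random model and does not degrade near the singularity --- your Step~2 cannot close; this is the genuinely new technical work in the paper's proof of the $a$-point count, and your plan omits it.
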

Inside the critical strip,
an effective error term was known previously only slightly to the left of the half-line under the 
assumption of the Riemann hypothesis, thanks to 
unpublished work of Selberg (see \cite{Tsang}, Chapter 8).  
In the region of absolute convergence ($\sigma > 1$),
Matsumoto \cite{Matsumotoapoints2}, \cite{Matsumotoapoints3}, with some additional constraints,
has given a formula for the number
of $a$-points of $\log \zeta(s)$ with an error term that has a power saving of $\log \log T$.
We have not determined the limits of our method for $\sigma>1$, 
but it should give a formula 
for the number of $a$-points of $\zeta(s)$ (and $\log \zeta(s)$ as well)
with an error term with a saving of at least $(\log T)^{1/2}$.

The error term in Theorem \ref{apoint thm}
is essentially the
square-root of the discrepancy $D_{\sigma}(T)$. 
We have not been able to 
determine conjecturally the correct size of $D_{\sigma}(T)$. In this direction
we have only the following remark. 
\begin{observation}
We have,
$$
D_{\sigma}(T) = \Omega(T^{1 - 2\sigma - \varepsilon}).
$$
Moreover, If $D_{\sigma}(T) = O(T^{1 - 2\sigma + \varepsilon})$ then
the Zero Density Hypothesis holds. 
\end{observation}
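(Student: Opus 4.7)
The plan is to link $D_\sigma(T)$ to the mean square of $\zeta$ for the $\Omega$-bound, and to the zero-counting function via Littlewood's lemma for the implication.

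For the $\Omega$-bound, I would exploit the classical Ingham--Titchmarsh asymptotic expansion of the mean square of $\zeta$ on the line: for fixed $\tfrac12<\sigma<1$ one has
\[
\frac{1}{T} \int_T^{2T} |\zeta(\sigma+it)|^2 \, dt = \zeta(2\sigma) + c_\sigma T^{1-2\sigma} + o(T^{1-2\sigma})
\]
with $c_\sigma \neq 0$. On the other hand, by orthogonality of the $X(p)$'s one computes $\mathbb{E}|\zeta(\sigma,X)|^2 = \zeta(2\sigma)$, so the moment discrepancy is of order exactly $T^{1-2\sigma}$. Writing each moment as a Lebesgue--Stieltjes integral of $e^{2v}$ against the distribution of $v = \log|\zeta|$, truncating to $|v| \leq V$, and integrating by parts yields
\[
\bigg| \frac{1}{T} \int_T^{2T} |\zeta(\sigma+it)|^2 \, dt - \mathbb{E}|\zeta(\sigma,X)|^2 \bigg| \ll D_\sigma(T) \cdot e^{2V} + (\text{tails}).
\]
Choosing $V = (\log T)^{1-\sigma + \varepsilon'}$, we have $e^{2V} = T^{o(1)}$ and the tails are negligible, using the super-polynomial decay $\mathbb{P}(\log|\zeta(\sigma,X)|>V) \leq \exp(-cV^{1/(1-\sigma)})$ from \cite{LamzouriLarge} and the matching bound for $\mathbb{P}_T$ from Theorem \ref{large thm}. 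Rearranging gives $D_\sigma(T) \gg T^{1-2\sigma - \varepsilon}$.

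For the implication, assume $D_{\sigma'}(T) = O(T^{1-2\sigma'+\varepsilon})$ holds uniformly for $\sigma'$ in a neighborhood of $\sigma$. I apply Littlewood's lemma to the rectangle with vertices $\sigma_0 + iT$, $2 + iT$, $2+2iT$, $\sigma_0 + 2iT$, with $\sigma_0 = \sigma - 1/\log T$, to obtain
\[
2\pi \sum_{\substack{\beta > \sigma_0 \\ T \leq \gamma \leq 2T}} (\beta - \sigma_0) = \int_T^{2T} \log|\zeta(\sigma_0+it)| \, dt + O(T).
\]
Since $\mathbb{E}\log|\zeta(\sigma_0, X)| = 0$, the same truncation/integration-by-parts argument (now with $v$ in place of $e^{2v}$) bounds the integral by $T \cdot V \cdot D_{\sigma_0}(T) + (\text{tails}) \ll T^{2-2\sigma_0+\varepsilon'}$. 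Each zero $\rho$ with $\beta \geq \sigma$ contributes at least $\sigma - \sigma_0 = 1/\log T$ to the weighted sum on the left, so
\[
N(\sigma; T, 2T) \ll (\log T) \cdot T^{2-2\sigma_0+\varepsilon'} \ll T^{2(1-\sigma) + \varepsilon},
\]
which is the Zero Density Hypothesis.

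The principal obstacle is performing the truncation sharply: the discrepancy $D_\sigma(T)$ naturally controls the distribution against indicators of bounded rectangles, whereas both $e^{2v}$ and $v$ are unbounded test functions. This is handled by combining the super-polynomial tail decay of both distributions, which allow truncation at $V = (\log T)^{1-\sigma + o(1)}$ so that the multiplicative cost $e^{2V}$ (resp.\ $V$) is $T^{o(1)}$, while tails remain negligibly small, thanks to Theorem \ref{large thm} and \cite{LamzouriLarge}. A secondary subtlety in the ZDH direction is the need to shift the Littlewood contour to $\sigma_0 = \sigma - 1/\log T$, which is the minimal shift that converts the weighted zero sum into a sharp bound on $N(\sigma;T,2T)$.
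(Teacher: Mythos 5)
Your argument is essentially the paper's: compare the Ingham mean-square asymptotic for $\zeta(\sigma+it)$ with $\ex|\zeta(\sigma,X)|^2=\zeta(2\sigma)$ to get the $\Omega$-bound, and combine Littlewood's lemma with the symmetry $\ex\log|\zeta(\sigma,X)|=0$ to derive the zero-density estimate; the paper carries this out in the Appendix in exactly this form, differing only in that it goes directly to zeros with $\beta>\sigma+\varepsilon$ (each contributing $\geq\varepsilon$ to the weighted sum) rather than shifting the abscissa to $\sigma_0=\sigma-1/\log T$ as you do.

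One technical point to be careful about: for the truncation in the $\Omega$-direction you set $V=(\log T)^{1-\sigma+\varepsilon'}$ and cite Theorem \ref{large thm} and Corollary \ref{EstimateRandom} to control $\mathbb{P}_T(\log|\zeta|>V)$. However Theorem \ref{large thm} is only proved for $\tau\leq b(\sigma)(\log T)^{1-\sigma}(\log\log T)^{1-1/\sigma}$, which is strictly smaller than your $V$ (recall $1-1/\sigma<0$), so it does not cover the tail range as stated. To close this you would instead combine the trivial bound $\int_{\log|\zeta|>V}|\zeta(\sigma+it)|^2 dt\leq(\mathbb{P}_T(\log|\zeta|>V))^{1/2}\big(\frac1T\int_T^{2T}|\zeta(\sigma+it)|^4dt\big)^{1/2}T$, the classical boundedness of the fourth moment for fixed $\sigma>\tfrac12$, and a Chebyshev-type bound on $\mathbb{P}_T(\log|\zeta|>V)$ coming from Lemma \ref{large sieve} together with the approximation of $\log\zeta$ by $R_Y$ outside a thin exceptional set. (In fairness, the paper's Appendix is equally terse on this step, saying only ``by integration by parts.'') A second, minor wrinkle in your zero-density direction is that shifting to $\sigma_0=\sigma-1/\log T$ requires the discrepancy hypothesis at an abscissa that varies with $T$, whereas the remark as stated is for a fixed $\sigma$; the paper avoids this by keeping the abscissa fixed and letting each zero with $\beta>\sigma+\varepsilon$ contribute $\geq\varepsilon$.
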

We give a proof of this remark in the Appendix. There is an apparent discrepancy between our lower and upper bound for $D_{\sigma}(T)$. It would be
very interesting to work out a reliable heuristic to predict the correct size of $D_{\sigma}(T)$. 

It is likely that our ideas can be generalized to other situations where the Bohr-Jessen framework
applies \cite{BohrJ}. For example, our method should adapt to the study of the zeros of the
Epstein zeta-function of a quadratic form with class number $n$. We expect the method to show
that the number of zeros of the Epstein zeta-function in the strip $\sigma_1 < \sigma < \sigma_2$
is $c(\sigma_1,\sigma_2) T + O(T (\log T)^{-A(\sigma_1,n)})$ with an $A(\sigma_1,n) \rightarrow 0$
as $n \rightarrow \infty$. This would refine previous results of Bombieri and Mueller \cite{BombieriMueller}
and Lee's improvement there-of \cite{Lee}.

\section{Key ideas and detailed results}

In probability theory, the classical Berry-Esseen Theorem states that if the characteristic functions of two real valued random variables are close, then their corresponding probability distributions are close as well.  In order to establish Theorem \ref{discrepancy thm} the key ingredient is to show that the characteristic function of the joint distribution of $\tmop{Re}\log\zeta(\sigma+it)$ and $\tmop{Im}\log\zeta(\sigma+it)$ can be very well approximated by the corresponding characteristic function of the random model $\log\zeta(\sigma,X)$. For $u,v\in \mathbb{R}$ we define
$$ 
\Phi_{\sigma, T}(u,v):= \frac{1}{T} \int_T^{2T} 
\exp\Big(i u \tmop{Re}\log\zeta(\sigma+it)+iv \tmop{Im}\log\zeta(\sigma+it)\Big)dt,
$$
and
$$ \Phi_{\sigma}^{\textup{rand}}(u,v):= \mathbb E \bigg(\exp\Big( i u \tmop{Re} \log \zeta(\sigma,X)
+iv \tmop{Im} \log \zeta (\sigma,X) \Big) \bigg).$$
Then we prove
\begin{theorem} \label{characteristic thm}
Let $\tfrac12< \sigma <1$ and $A\geq1$ be fixed. 
There exists a constant $b_1=b_1(\sigma,A)$
such that for $|u|,|v| \leq  b_1(\log T)^{\sigma}$ we have
\begin{equation}\label{asympChar}
 \Phi_{\sigma, T}(u,v)=\Phi_{\sigma}^{\textup{rand}}(u,v)+O\left(\frac{1}{(\log T)^A}\right).
\end{equation}
Moreover, there exists a constant $b_2=b_2(A)$ such that for $|u|,|v| \leq b_2\log T/\log\log T$ we have
$$ \Phi_{1, T}(u,v)=\Phi_{1}^{\textup{rand}}(u,v)+O\left(\frac{1}{(\log T)^A}\right).
$$
\end{theorem}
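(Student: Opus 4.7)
The plan is to reduce the comparison of $\Phi_{\sigma,T}(u,v)$ and $\Phi_\sigma^{\text{rand}}(u,v)$ to an orthogonality calculation on a short Dirichlet polynomial over primes. Writing $\log\zeta(\sigma+it) = R + iI$ and setting $\alpha := (v+iu)/2$ and $\beta := (iu-v)/2 = -\overline{\alpha}$, one checks that $\exp(iuR + ivI) = \zeta(\sigma+it)^{\alpha}\zeta(\sigma-it)^{\beta}$, so that $\Phi_{\sigma,T}(u,v)$ is a complex joint moment of $\zeta$; meanwhile, by independence of the $X(p)$'s, $\Phi_\sigma^{\text{rand}}(u,v)$ factors as an Euler product
\[
\Phi_\sigma^{\text{rand}}(u,v) = \prod_p \mathbb{E}\Bigl[\bigl(1 - X(p)p^{-\sigma}\bigr)^{-\alpha}\bigl(1 - \overline{X(p)}\,p^{-\sigma}\bigr)^{-\beta}\Bigr].
\]

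Next I would introduce a truncation parameter $Y$ (of the order $\exp((\log T)^{1-\sigma})$ for $\sigma<1$, and of the order a small power of $\log T$ for $\sigma=1$), together with the short Dirichlet polynomial $P(s;Y) := \sum_{p^k \leq Y} 1/(k\,p^{ks})$ and its random analogue $P_X(\sigma;Y) := \sum_{p^k \leq Y} X(p)^k/(k\,p^{k\sigma})$. On the random side, independence makes $\log\zeta(\sigma,X) - P_X(\sigma;Y)$ a sum of independent centered variables whose moment generating function admits uniform bounds; replacing $\log\zeta(\sigma,X)$ by $P_X(\sigma;Y)$ then perturbs $\Phi_\sigma^{\text{rand}}$ by $O((\log T)^{-A})$ throughout the stated range of $(u,v)$.

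The core of the argument is the analogous truncation on the zeta side: a Selberg-type identity expresses $\log\zeta(\sigma+it)$ as $P(\sigma+it;Y)$ plus a contribution coming from the zeros of $\zeta$ in a narrow rectangle around $\sigma+it$, and one wants to establish
\[
\log\zeta(\sigma+it) = P(\sigma+it;Y) + O\bigl((\log T)^{-A-\sigma}\bigr)
\]
for $t \in [T,2T]$ outside an exceptional set of measure $o(T/(\log T)^A)$. Both the exceptional set and the error in the surviving set are controlled by a large complex-moment bound of the form $\tfrac{1}{T}\int_T^{2T}|\zeta(\sigma+it)|^{2k}dt \ll T(\log T)^{Ck^2}$ (or an analogous twisted bound for $\zeta^{\alpha}\zeta^{\beta}$) valid for $k$ allowed to grow like $(\log T)^\sigma$. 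This moment range is precisely what forces the admissible window $|u|,|v|\leq b_1 (\log T)^\sigma$; at $\sigma=1$ the corresponding bound only tolerates $k \ll \log T/\log\log T$, explaining the smaller admissible window in the second statement. \textbf{This is the main obstacle}, and the step where the careful analysis of large complex moments of $\zeta$ inside the critical strip is essential.

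Once both sides have been reduced to their truncations, the comparison is direct. Expanding $\exp(iu\operatorname{Re} P(\sigma+it;Y) + iv\operatorname{Im} P(\sigma+it;Y))$ into a Taylor series truncated at level $K \gg (|u|+|v|)\log Y$, each resulting term is a Dirichlet monomial $(n/m)^{-it}$ with $n,m$ built from primes $p \leq Y$. The $t$-average $\tfrac{1}{T}\int_T^{2T}(n/m)^{-it}dt$ equals $1$ if $n=m$ and is $O(1/(T|\log(n/m)|))$ otherwise; by unique factorization the diagonal contribution reproduces term-by-term the truncated Euler product for $\Phi_\sigma^{\text{rand}}$, while the off-diagonal and tail-of-Taylor contributions are $O(Y^{C(|u|+|v|)}/T)$, which is $O((\log T)^{-A})$ under the stated choice of $Y$ and range of $|u|,|v|$. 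Assembling these estimates yields Theorem~\ref{characteristic thm}.
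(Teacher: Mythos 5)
Your blueprint --- approximate $\log\zeta$ by a short Dirichlet polynomial, Taylor-expand the exponential, and match coefficients via orthogonality against the random model --- is the right skeleton, but it misses the single idea that makes the range $|u|,|v| \ll (\log T)^\sigma$ achievable. The paper's stated main innovation (in the discussion after Proposition~\ref{complex}) is to first restrict the $t$-integral to the subset $\mathcal{A}(T)$ on which $|R_Y(\sigma+it)| \leq (\log T)^{1-\sigma}/\log\log T$. Only on this set is the quantity inside the exponential uniformly $\ll \log T/\log\log T$, so the Taylor series can be truncated at level $N \asymp \log T/\log\log T$ with an exponentially small remainder for $|z_1|,|z_2|\le b_5(\log T)^\sigma$. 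Without that restriction, $|P(\sigma+it;Y)|$ can be as large as $\sum_{p\le Y}p^{-\sigma}\asymp Y^{1-\sigma}/\log Y$, so $(|u|+|v|)\,|P|$ vastly exceeds your proposed truncation level $K\gg(|u|+|v|)\log Y$ and the Taylor tail does not converge; dropping the device leaves one stuck in the classical range $|z|\ll(\log T)^{2\sigma-1}$, which does not give the theorem.

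Two further choices compound the problem. Taking $Y\asymp\exp((\log T)^{1-\sigma})$ breaks the orthogonality step: after Taylor truncation at level $N$ the integers $n,m$ produced run up to $Y^{N}$, and the transfer to the random model (Lemma~\ref{transition lemma}) requires $Y^{2N}\ll T^{1/4}$; with $N\asymp\log T/\log\log T$ this forces $Y$ to be a \emph{fixed power of $\log T$} (the paper takes $Y=(\log T)^{B/(\sigma-1/2)}$), whereas your $Y$ gives $Y^{2N}=\exp\bigl(2(\log T)^{2-\sigma}/\log\log T\bigr)\gg T$. Moreover the moment bound $\frac1T\int_T^{2T}|\zeta(\sigma+it)|^{2k}\,dt\ll T(\log T)^{Ck^{2}}$ with $k\sim(\log T)^\sigma$ that you invoke to control the exceptional set and the pointwise error is not available --- that shape is the $\sigma=\tfrac12$ phenomenon, and even there it is not known unconditionally for such large $k$ --- nor is it what is needed. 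The approximation $\log\zeta(\sigma+it)\approx R_Y(\sigma+it)$ outside a small exceptional set is supplied by Lemma~\ref{gs formula}, a consequence of classical zero-density theorems giving a power-of-$T$ saving; the only high moments entering the argument are those of the short polynomial $R_Y$ itself, handled unconditionally by diagonal counting in Lemma~\ref{large sieve}.
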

To deduce Theorem \ref{discrepancy thm} we use Beurling-Selberg functions (see Section 6 below) to relate the distribution function $\mathbb{P}_{T} (\log \zeta(\sigma + it) \in \mathcal{R})$ to the characteristic function $\Phi_{\sigma, T}(u,v)$. We should note that any improvement in the range of validity of Theorem \ref{characteristic thm} would lead to an improved bound for the discrepancy $D_{\sigma}(T)$. Indeed, we can show that $D_{\sigma}(T)\ll 1/L$ if the asymptotic formula \eqref{asympChar} holds in the range $|u|,|v| \ll L$.

In order to investigate the distribution of large values of $\log|\zeta(\sigma+it)|$ (or $\arg\zeta(\sigma+it)$) and prove Theorem \ref{large thm}, we study large complex moments of $\zeta(\sigma+it)$ and compare them to the corresponding complex moments of $\zeta(\sigma,X)$. Define
$$M_z(T):=\frac{1}{T}\int_T^{2T}|\zeta(\sigma+it)|^zdt.$$
Assuming the Riemann hypothesis, the first author \cite{LamzouriLarge} established an asymptotic formula for $M_z(T)$ uniformly in the range $|z|\ll (\log T)^{2\sigma-1}$, and conjectured that such an asymptotic should hold in the extended range $|z|\ll (\log T)^{\sigma}$. The assumption of the Riemann hypothesis is necessary in this case, since $|\zeta(\sigma+it)|^z$ is very large when $\sigma+it$ is close to a zero of $\zeta(s)$ and $z$ is a negative real number. Also note that, when $\tmop{Re}(z)$ is large, the moment $M_z(T)$ is heavily affected by the contribution of the points $t$ where $|\zeta(\sigma+it)|$ is large. Thus, short of proving strong bounds for $|\zeta(\sigma+it)|$ and without assuming the Riemann hypothesis, we cannot hope for asymptotics of the moments $M_z(T)$, except in a narrow range of values for $z$. To overcome this difficulty, we compute instead complex moments of $\zeta(\sigma+it)$ after first removing a small set of ``bad'' points $t$ in $[T,2T]$, namely those close to zeros of $\zeta(s)$ and those for which $|\zeta(\sigma+it)|$ is large. Using this method we obtain an asymptotic formula for these moments in the full conjectured range $|z|\ll (\log T)^{\sigma}$.

\begin{theorem}\label{complex theorem}
Let $\tfrac 12 < \sigma < 1$ and $A\geq 1$ be fixed. There exist positive constants $b_3=b_3(\sigma, A)$ and $b_4=b_4(\sigma, A)$ and a set  $\mathcal{E}(T)\subset [T,2T]$ of measure $\textup{meas}({\mathcal{E}(T)})\leq T\exp\big(-b_3\log T/\log\log T \big),$
such that for all complex
numbers $z$ with $|z| \leq b_4(\log T)^{\sigma}$ 
we have
$$
\frac{1}{T} \int_{[T,2T]\setminus\mathcal{E}(T)} |\zeta(\sigma+it)|^zdt  
= \ex\left(|\zeta(\sigma,X)|^z\right) +O\left(\frac{\ex\left(|\zeta(\sigma,X)|^{\tmop{Re}(z)}\right)}{(\log T)^A}\right),
$$
Moreover, the same asymptotic formula holds when $|\zeta(\sigma+it)|^z$ and $|\zeta(\sigma,X)|^z$ are replaced by $\exp\big(z(\arg\zeta(\sigma+it))\big)$ and $\exp\big(z(\arg\zeta(\sigma,X))\big)$ respectively.
\end{theorem}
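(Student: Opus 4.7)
The plan is to approximate $\log\zeta(\sigma+it)$ by a short sum over primes on a large ``good'' set, and then to exploit the resulting Euler-product structure of $|\zeta|^z$ and of $\exp(z\arg\zeta)$ to reduce the moment computation to a product of local factors that match the corresponding expectations for the independent random variables $X(p)$. Concretely, I would construct the exceptional set $\mathcal{E}(T)$, independently of $z$, as the union of two pieces. The set $\mathcal{E}_1(T)$ collects those $t\in[T,2T]$ for which $\zeta(s)$ has a zero in the rectangle $\{s:\tmop{Re}(s)>\sigma-\delta,\,|\tmop{Im}(s)-t|\leq 1\}$ for a small $\delta=\delta(\sigma,A)>0$; a Selberg-type zero density estimate gives $\tmop{meas}(\mathcal{E}_1(T))\leq T\exp(-b_3\log T/\log\log T)$ once $\delta$ is small enough. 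The set $\mathcal{E}_2(T)$ collects those $t$ for which the prime sum $\sum_{p\leq Y}p^{-\sigma-it}$, with $Y=(\log T)^{B}$ and $B=B(\sigma,A)$ to be fixed, exceeds the Montgomery-type threshold of order $(\log T)^{1-\sigma}$; a high-moment Markov inequality applied via the orthogonality relation $\frac{1}{T}\int_T^{2T}n^{-it}\,dt=\delta_{n=1}+O(n/T)$ furnishes an identical measure bound.

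Second, for $t\notin\mathcal{E}(T)$ I would establish the uniform approximation
\[
\log\zeta(\sigma+it)=\sum_{p\leq Y}\sum_{k\geq1}\frac{1}{k\,p^{k(\sigma+it)}}+O\bigl((\log T)^{-A-1}\bigr).
\]
Since no zeros of $\zeta$ lie in the prescribed rectangle, the Hadamard product yields $|\zeta'/\zeta(s)|\ll\log T$ on a horizontal segment from $\sigma+it$ into the region of absolute convergence; integrating and comparing with the convergent Dirichlet series for $\log\zeta$ on $\tmop{Re}(s)>1$ produces the identity in the standard Titchmarsh style. Exponentiating gives $\zeta(\sigma+it)^z\approx\prod_{p\leq Y}(1-p^{-\sigma-it})^{-z}$; taking the absolute value (respectively the imaginary part of the exponent) yields the analogous factored approximation for $|\zeta|^z$ (respectively $\exp(z\arg\zeta)$) with the same error.

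Third, I would compute the moment via this factored approximation. Each local factor $(1-p^{-\sigma-it})^{-z}$ expands as $\sum_{k\geq 0}d_z(p^k)\,p^{-k(\sigma+it)}$; truncating each at $k\leq K$ with $K\asymp|z|\asymp(\log T)^\sigma$ and multiplying gives a Dirichlet polynomial $\sum_n a_z(n)\,n^{-it}$ supported on $n\leq\prod_{p\leq Y}p^K\leq\exp(cKY)\leq T^{1-\eta}$ as long as $B$ is chosen with $B<1-\sigma$. Orthogonality then isolates the $n=1$ coefficient, which equals the product over $p\leq Y$ of $\ex((1-X(p)/p^\sigma)^{-z})$ truncated to the same $k\leq K$; reinserting the Taylor tail and extending the product to all primes, legitimate by the almost-sure convergence of the random Euler product for $\sigma>1/2$, recovers $\ex|\zeta(\sigma,X)|^z$ up to an error $\ll\ex|\zeta(\sigma,X)|^{\tmop{Re}(z)}/(\log T)^A$. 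Finally, the discrepancy between $\frac{1}{T}\int_{[T,2T]\setminus\mathcal{E}(T)}$ and $\frac{1}{T}\int_T^{2T}$ of the Dirichlet polynomial, namely the integral over $\mathcal{E}(T)$, is controlled via the pointwise estimate $\sum_n|a_z(n)|n^{-\sigma}\ll\exp(C(\log T)^{\sigma+B(1-\sigma)})$: since $\sigma+B(1-\sigma)<1$, this contribution is $\ll T\exp(-b_3\log T/(2\log\log T))$, comfortably absorbed by the target error.

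The main obstacle will be the simultaneous calibration of the parameters $\delta$, $B$, and $K$: $\delta$ must be small enough for Selberg's zero density estimate to yield the exponential saving $\exp(-b_3\log T/\log\log T)$ in $\tmop{meas}(\mathcal{E}_1(T))$ while still permitting the Dirichlet polynomial approximation to go through; $K$ must be large enough relative to $|z|$ for the Taylor tail at each prime to be $O((\log T)^{-A})$; and $Y=(\log T)^B$ must be small enough that $\exp(cKY)$ stays below $T^{1-\eta}$ (so the mean value theorem is effective) yet large enough that $\sum_{p>Y}p^{-\sigma-it}$ is negligible on the good set. The constant $b_4$ in the hypothesis $|z|\leq b_4(\log T)^\sigma$ is determined by these competing constraints, and the measure bound $T\exp(-b_3\log T/\log\log T)$ reflects the tightest of them.
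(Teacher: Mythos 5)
Your proposal has a genuine gap that, upon checking, turns out to be fatal: the two constraints you flag at the end as a ``calibration'' problem are in fact incompatible. On one hand, to approximate $\log\zeta(\sigma+it)$ by $R_Y(\sigma+it)$ with error $O((\log T)^{-A-1})$ on a large set (whether via the Granville--Soundararajan lemma, which gives error $Y^{-(\sigma-1/2)/2}\log^3 T$, or via your zero-free rectangle argument, which gives error roughly $Y^{-\delta}\log T$ with $\delta < \sigma-\tfrac12$) you need $Y$ to be at least $(\log T)^{c(A)/(\sigma-1/2)}$, i.e. $B \geq c(A)/(\sigma-\tfrac12)$ with $c(A)$ of size at least $2(A+3)$, hence $B \gg 1$ always. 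In fact, since you exponentiate the error by a factor of size $|z|\asymp(\log T)^{\sigma}$, you actually need the approximation error to be $\ll(\log T)^{-A-\sigma}$, making $B$ even larger. On the other hand, expanding the truncated Euler product $\prod_{p\le Y}(1-p^{-\sigma-it})^{-z}$ with each local factor cut at $k\le K\asymp(\log T)^{\sigma}$ produces a Dirichlet polynomial of length up to $\prod_{p\le Y}p^{K}=\exp(K\theta(Y))\asymp\exp((\log T)^{\sigma+B})$, so the mean-value/orthogonality step forces $\sigma+B<1$, i.e. $B<1-\sigma<\tfrac12$. Since you cannot have $B\ge 2(A+3)$ and $B<\tfrac12$ simultaneously for any $A\ge 1$, no choice of $Y$ makes both steps go through. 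The problem is structural, not a matter of careful tuning.

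The paper avoids this trap by not expanding the Euler factors prime by prime. It chooses $Y=(\log T)^{B/(\sigma-1/2)}$ with $B=2A+8$ large (so the approximation error is acceptable), and then Taylor-expands $\exp(z_1R_Y+z_2\overline{R_Y})$ in the variable $z$, truncating at order $N\asymp\log T/\log\log T$. The indispensable innovation (highlighted in the paper) is the extra requirement $|R_Y(\sigma+it)|\le(\log T)^{1-\sigma}/\log\log T$ built into the good set: since then $|z\,R_Y|\ll\log T/\log\log T\ll N$, the Taylor tail beyond order $N$ is $\ll e^{-N}$, and Lemma \ref{poly large deviation} shows that this extra condition only excludes a set of measure $T\exp(-c\log T/\log\log T)$. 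The moments $R_Y^j\overline{R_Y}^{\ell}$ with $j+\ell\le N$ then have Dirichlet polynomial length $Y^{j+\ell}\le Y^{2N}$, and by taking the constant $D$ in $N=[\log T/(D\log\log T)]$ large this stays below $T^{1/4}$, so Lemma \ref{transition lemma} applies. In effect, the paper controls the number of Taylor \emph{terms in $z$}, not the power of each prime, and that is what keeps the resulting polynomial short even though $Y$ is a large power of $\log T$. Without this device (or a comparable one) the proposed route does not close.
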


When computing complex moments of $\zeta(\sigma+it)$ the first step is to use the classical zero density estimates to approximate $\log\zeta(\sigma+it)$ by a short Dirichlet polynomial for all $t\in [T,2T]$ except for a set of small measure (see Lemma \ref{gs formula} below). Let
$$ R_Y(\sigma+it):= \sum_{p^n\leq Y} \frac{1}{np^{n(\sigma+it)}} \quad \textup{ and }  \quad R_Y(\sigma, X):= \sum_{p^n\leq Y} \frac{X(p)^n}{np^{\sigma n}}.$$
We extract Theorems \ref{characteristic thm} and \ref{complex theorem} from the following key proposition.
\begin{proposition}\label{complex}
Let $\tfrac 12 < \sigma < 1$ and $A\geq 1$ be fixed. Let $Y=(\log T)^A$. There exist positive constants $b_5=b_5(\sigma, A)>0$ and $b_6=b_6(\sigma, A)$ such that for all complex
numbers $z_1,z_2$ with $|z_1|, |z_2| \leq b_5(\log T)^{\sigma}$ 
we have
\begin{align*}
&\frac{1}{T} \int_{\mathcal{A}(T)}  \exp\Big(z_1R_Y(\sigma+it)+ z_2\overline{R_Y(\sigma+it)}\Big)dt \\
&\qquad \qquad = \ex\left(\exp\left(z_1R_Y(\sigma, X)+ z_2 \overline{R_Y(\sigma, X)}\right)\right) +O\left(\exp\left(-b_6\frac{\log T}{\log\log T}\right)\right),
\end{align*}
where $\mathcal{A}(T)$ is the set of those $t\in [T, 2T]$ such that 
$ |R_Y(\sigma+it)|\leq (\log T)^{1-\sigma}/\log \log T.$
\end{proposition}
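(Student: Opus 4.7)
The plan is to expand both sides of the identity to a truncated Taylor polynomial of degree $K=\lfloor\eta\log T/\log\log T\rfloor$, for a small constant $\eta=\eta(\sigma,A)$, and compare monomial by monomial. Write $W(t)=z_1R_Y(\sigma+it)+z_2\overline{R_Y(\sigma+it)}$, $W_X=z_1R_Y(\sigma,X)+z_2\overline{R_Y(\sigma,X)}$, and $P_K(w)=\sum_{k\leq K}w^k/k!$. On $\mathcal{A}(T)$ we have $|W(t)|\leq 2b_5\log T/\log\log T$, and the elementary remainder bound $|e^w-P_K(w)|\leq |w|^{K+1}e^{|w|}/(K+1)!$ yields, for $\eta$ chosen large enough compared to $b_5$, that
\[
\frac{1}{T}\int_{\mathcal{A}(T)}e^{W(t)}\,dt=\frac{1}{T}\int_{\mathcal{A}(T)}P_K(W(t))\,dt+O\bigl(\exp(-c\log T/\log\log T)\bigr).
\]

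The core step is moment matching. For each pair $(a,b)$ with $a+b\leq K$, I would expand $R_Y(\sigma+it)^a\overline{R_Y(\sigma+it)}^b$ as a Dirichlet polynomial of length at most $Y^{a+b}$, grouping terms by the integer $M$ (the product of the $R_Y$-numerators) and $N$ (the product of the $\overline{R_Y}$-numerators). Averaging in $t$ kills all pairs with $M\neq N$ up to an off-diagonal error of size $O(\max(M,N)/T)$, while the surviving diagonal $M=N$ reproduces the random moment $\ex\bigl(R_Y(\sigma,X)^a\overline{R_Y(\sigma,X)}^b\bigr)$ exactly, by the orthogonality $\ex X(p)^n\overline{X(p)}^m=\delta_{n,m}$. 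Summed against the Taylor coefficients $|z_1|^a|z_2|^b/(a!b!)$ and over $a+b\leq K$, the total off-diagonal error for the integral of $P_K(W)$ over $[T,2T]$ is of order $Y^Ke^{(|z_1|+|z_2|)Y^{1-\sigma}}/T$, which is $O(T^{-\delta})$ once $b_5$ and $\eta$ are chosen small in terms of $\sigma$ and $A$. The difference between $\int_{\mathcal{A}(T)}P_K(W)$ and $\int_T^{2T}P_K(W)$ is absorbed via Cauchy--Schwarz, using that $\mathrm{meas}([T,2T]\setminus\mathcal{A}(T))\ll T\exp(-b_3\log T/\log\log T)$ (by Markov applied to a high diagonal moment of $R_Y$) and that the $L^2$ norm of $P_K(W)$ over $[T,2T]$ is controlled by the same moment matching with $2K$ in place of $K$.

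The main obstacle is the analogous tail on the probability side: one must show $\ex e^{W_X}-\ex P_K(W_X)=O(\exp(-b_6\log T/\log\log T))$, even though $|W_X|$ can reach order $(\log T)^{\sigma+A(1-\sigma)}$, well above $K$. My plan is to mirror the arithmetic argument on the probability space: split the random expectation according to whether $|R_Y(\sigma,X)|\leq(\log T)^{1-\sigma}/\log\log T$ or not. On the ``good'' event the Taylor tail is bounded deterministically as above; on its complement I would apply Cauchy--Schwarz with $\ex|e^{W_X}|^2=\ex\exp(W_X+\overline{W_X})$ (an Euler product amenable to the same local-factor analysis as $\ex e^{W_X}$) and a sub-exponential tail bound $\pr(|R_Y(\sigma,X)|>(\log T)^{1-\sigma}/\log\log T)\leq\exp(-c(\log T)^{1-\sigma}/\log\log T)$, obtained from $R_Y(\sigma,X)$ being a sum of independent, bounded, mean-zero summands indexed by primes $p\leq Y$. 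The delicate issue is to make the sub-exponential decay beat the growth of the second moment of $e^{W_X}$; this balance is what forces $b_5$ (and hence $b_6$) to be sufficiently small in terms of $\sigma$ and $A$.
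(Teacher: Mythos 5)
Your overall skeleton matches the paper's: Taylor-expand to degree $K\asymp\log T/\log\log T$, use the pointwise cutoff on $\mathcal{A}(T)$ to control the remainder on the arithmetic side, match diagonal moments (this is Lemma~\ref{transition lemma}, which the paper simply cites from Tsang), extend from $\mathcal{A}(T)$ to $[T,2T]$ via Cauchy--Schwarz and the tail bound on $\mathrm{meas}([T,2T]\setminus\mathcal{A}(T))$, and finally reassemble the random exponential. Where you diverge is in the last step, and that is where there is a genuine gap.

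You propose to control $\ex\,e^{W_X}-\ex\,P_K(W_X)$ by splitting the probability space according to $|R_Y(\sigma,X)|\lessgtr(\log T)^{1-\sigma}/\log\log T$ and then applying Cauchy--Schwarz, citing a sub-exponential tail bound of the form $\pr\bigl(|R_Y(\sigma,X)|>(\log T)^{1-\sigma}/\log\log T\bigr)\le\exp\bigl(-c(\log T)^{1-\sigma}/\log\log T\bigr)$ obtained ``from $R_Y$ being a sum of independent, bounded, mean-zero summands''. That is the Bernstein-type rate, and it is too weak: the quantity you need to beat, $\ex|e^{W_X}|^2=\ex\,e^{2\mathrm{Re}\,W_X}$, is genuinely of size $\exp\bigl(\Theta(\log T/\log\log T)\bigr)$ once $|z_1|,|z_2|\asymp(\log T)^{\sigma}$ (this is the same growth as $\ex|\zeta(\sigma,X)|^{k}$ for $k\asymp(\log T)^{\sigma}$, cf.\ Proposition~\ref{saddle1}). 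Since $(\log T)^{1-\sigma}=o(\log T)$, the Cauchy--Schwarz product blows up for every choice of $b_5$, not just for $b_5$ insufficiently small: no constant can bridge a gap in the \emph{exponent} of the saving. To make your route close you would first have to upgrade the tail bound to $\exp(-c\log T/\log\log T)$ via Markov with a high moment (exactly the content of Lemma~\ref{poly large deviation}), at which point the two exponents do match and the balance becomes a matter of tuning $b_5$.

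The paper avoids this entirely by never truncating the random variable. It bounds the random Taylor tail directly as
$$
\Bigl|\ex\,e^{W_X}-\ex\,P_N(W_X)\Bigr|\le\sum_{n>N}\frac{(2k)^n}{n!}\,\ex\bigl(|R_Y(\sigma,X)|^n\bigr)
\ll\sum_{n>N}\Bigl(\frac{Ck}{(n\log n)^{\sigma}}\Bigr)^n\ll e^{-N},
$$
using the moment bound $\ex|R_Y(\sigma,X)|^n\ll\bigl(Cn^{1-\sigma}/(\log n)^{\sigma}\bigr)^n$ of Lemma~\ref{large sieve}. The point is that the sup of $|W_X|$ (which can indeed reach $(\log T)^{\sigma+A(1-\sigma)}$) is irrelevant; the moment sequence already encodes how rarely $|R_Y(\sigma,X)|$ is large, and pairing $\ex|R_Y|^n$ against $k^n/n!$ yields a geometrically decaying series for $n>N\asymp\log T/\log\log T$ once $k\le b_5(\log T)^{\sigma}$ with $b_5$ small. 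This is both simpler and the source of the final error $\exp(-b_6\log T/\log\log T)$. I would recommend replacing your truncate-and-Cauchy--Schwarz treatment of the random tail with this moment-series bound; the rest of your outline is sound.
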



Compared to earlier treatments our main innovation consists in
the introduction of the
condition $|R_Y(\sigma+it)| \leq (\log T)^{1 - \sigma} / \log\log T$
in $\mathcal{A}(T)$. Without this 
constraint the range of $|z_1|$ and $|z_2|$ in Proposition \ref{complex}
would be reduced to $(\log T)^{2 \sigma - 1}$. 

Using Littlewood's Lemma (see equation \eqref{Littlewood} below), one can count the number of $a$-points of $\zeta(s)$ in the strip $\tfrac12 < \sigma_1<\sigma<\sigma_2<1$, $T\le t\le 2T$, if one can estimate the integral
\begin{equation}\label{intlogzeta}
\int_{T}^{2T} \log |\zeta(\sigma + it) - a| dt.
\end{equation}
In \cite{BorchseniusJessen}, Borchsenius and Jessen proved the following asymptotic formula  for this integral from which they deduced  their result \eqref{BorchseniusJessen}
$$ 
\frac1T\int_{T}^{2T} \log |\zeta(\sigma + it) - a| dt \sim \mathbb{E} [ \log |\zeta(\sigma,X) - a| ], \textup{ as } T\to\infty.
$$
We improve on this result, obtaining the first effective error term for the integral \eqref{intlogzeta}.
\begin{theorem} \label{aux}
Let $\tfrac 12 < \sigma < 1$ and $a\neq 0$ be a complex number. Then,
$$
\frac1T\int_{T}^{2T} \log |\zeta(\sigma + it) - a| dt = \mathbb{E} [\log |\zeta(\sigma,X) - a| ]
+ O \bigg ( \frac{(\log\log T)^{2}}{(\log T)^{\sigma}}\bigg ). 
$$
\end{theorem}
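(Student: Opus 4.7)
The plan is to recast the target estimate as an integrated comparison of two probability measures on $\mathbb{C}$. Starting from the layer-cake identity
$$
\log|w-a| \;=\; \int_{1}^{\infty}\mathbf{1}_{|w-a|>y}\,\frac{dy}{y}
\;-\;\int_{0}^{1}\mathbf{1}_{|w-a|\leq y}\,\frac{dy}{y},
\qquad w\neq a,
$$
integrating in $t\in[T,2T]$ on the zeta side and over the probability space on the random side, and taking the difference, one obtains
$$
\frac{1}{T}\int_{T}^{2T}\log|\zeta(\sigma+it)-a|\,dt
-\mathbb{E}\bigl[\log|\zeta(\sigma,X)-a|\bigr]
\;=\;-\int_{0}^{\infty}\Delta(y)\,\frac{dy}{y},
$$
where
$$
\Delta(y) := \mathbb{P}_{T}\bigl(|\zeta(\sigma+it)-a|\leq y\bigr)
-\mathbb{P}\bigl(|\zeta(\sigma,X)-a|\leq y\bigr).
$$
The task thus reduces to bounding $|\Delta(y)|$ pointwise and integrating the result against $dy/y$.

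I would then split the $y$-axis at $y_{1}:=\exp(-(\log\log T)^{2})$ and $y_{2}:=(\log T)^{1-\sigma}$. On the bulk range $y_{1}\leq y\leq y_{2}$ I would sandwich the disk $D(a,y)$ between two unions of axis-parallel rectangles in the $\log\zeta$-plane near $\log a$, chosen so that their symmetric difference has $\mathbb{P}^{\mathrm{rand}}$-measure $O((\log T)^{-\sigma})$; this is possible because $\log\zeta(\sigma,X)$ has a bounded density on any compact set. Theorem~\ref{discrepancy thm} then yields $|\Delta(y)|\ll(\log T)^{-\sigma}$ uniformly on this range, whose weighted integral contributes $D_{\sigma}(T)\log(y_{2}/y_{1})\ll (\log\log T)^{2}/(\log T)^{\sigma}$, which is exactly the claimed error size. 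For the upper tail $y>y_{2}$, the large-deviation estimate \eqref{large deviation asymptotic} together with the analogous bound for the random model show that $\mathbb{P}_{T}(|\zeta-a|>y)$ and $\mathbb{P}(|\zeta(\sigma,X)-a|>y)$ both decay super-polynomially in $\log y$, so this contribution is negligible.

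The delicate regime is $0<y<y_{1}$, where the naive bound $|\Delta(y)|\ll D_{\sigma}(T)$ would make the integral against $dy/y$ diverge at the origin. Here I would bound the two measures \emph{separately} and exploit the smallness of the disk. For the random model one has $\mathbb{P}(|\zeta(\sigma,X)-a|\leq y)\ll y^{2}$ from the bounded density of $\log\zeta(\sigma,X)$ near $\log a$; enclosing the corresponding tiny disk in a single rectangle of side $O(y/|a|)$ in the $\log\zeta$-plane and applying Theorem~\ref{discrepancy thm} yields the matching bound $\mathbb{P}_{T}(|\zeta(\sigma+it)-a|\leq y)\ll y^{2}+D_{\sigma}(T)$. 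The $y^{2}$ pieces integrate harmlessly against $dy/y$ down to $0$, while below a cutoff $y_{0}\ll 1/T$ one disposes of the residual $D_{\sigma}(T)$ term by invoking the classical non-effective consequence of Jensen's formula that $N_{a}(T)\ll T$, which provides the a priori linear bound $\mathbb{P}_{T}(|\zeta(\sigma+it)-a|\leq y)\ll y$ for the very smallest disks. The main obstacle is precisely this small-$y$ window: Theorem~\ref{discrepancy thm} saturates once the disk shrinks below the scale $\sqrt{D_{\sigma}(T)}$, so one must either (i) insert the non-effective classical bound on $N_{a}(T)$, or (ii) perform a separate Fourier-analytic estimate using the characteristic function bound of Theorem~\ref{characteristic thm} applied to a smoothed indicator of a tiny disk about $a$, in order to secure convergence of the integral at $y=0$.
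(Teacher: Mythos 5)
Your layer-cake reduction
$$
\frac{1}{T}\int_{T}^{2T}\log|\zeta(\sigma+it)-a|\,dt-\mathbb{E}\bigl[\log|\zeta(\sigma,X)-a|\bigr]=-\int_{0}^{\infty}\Delta(y)\,\frac{dy}{y}
$$
is a clean and genuinely different starting point from the paper's, but it runs into a geometric obstruction that the paper's set-up is specifically designed to avoid. The discrepancy Theorem~\ref{discrepancy thm} compares the two laws on axis-parallel \emph{rectangles in the $\log\zeta$-plane}, whereas the event $\{|\zeta-a|\leq y\}$ pulls back under $w\mapsto e^{w}$ to a $2\pi$-periodic union of components near $\log a+2\pi ik$, $k\in\mathbb{Z}$, not a single region ``near $\log a$'' as your sketch assumes. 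Applying the discrepancy bound requires at least one rectangle per component that carries non-negligible mass. Since $\mathbb{P}(|\arg\zeta(\sigma,X)|>M)$ decays like $\exp(-cM^{1/(1-\sigma)+o(1)})$, one cannot ignore components beyond $|k|\gtrsim(\log\log T)^{1-\sigma}$ without incurring a tail error $\gg D_{\sigma}(T)$; thus for each fixed $y$ in your bulk range you have $|\Delta(y)|\gg(\log\log T)^{1-\sigma}D_{\sigma}(T)$, and integrating against $\log(y_{2}/y_{1})\asymp(\log\log T)^{2}$ yields a bound of order $(\log\log T)^{3-\sigma}/(\log T)^{\sigma}$ rather than $(\log\log T)^{2}/(\log T)^{\sigma}$. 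This sub-polylogarithmic loss is structural to the disk-based slicing, not a bookkeeping slip.

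The second gap is the one you flag yourself: the regime $0<y<y_{1}$. Your option (i) does not give $\mathbb{P}_{T}(|\zeta(\sigma+it)-a|\leq y)\ll y$, since that would require a lower bound on $|\zeta'|$ at $a$-points, which is not available; $N_{a}(T)\ll T$ by itself says nothing about how slowly $\zeta(\sigma+it)$ departs from $a$. Your option (ii) is never executed. The paper instead closes exactly this gap with Proposition~\ref{apoint prop} (the $L^{2k}$ bound $\frac{1}{T}\int_{T}^{2T}|\log|\zeta(\sigma+it)-a||^{2k}\,dt\ll(Ck)^{4k}$, proved via Jensen's formula), combined with H\"older's inequality and the discrepancy bound on the measure of the bad set, with $k\asymp\log\log T$. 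More fundamentally, the paper sidesteps the periodicity problem altogether by working in the $(u,v)=(\log|\zeta|,\arg\zeta)$ coordinates from the outset: it trims to the \emph{rectangular} bad set $|\log|\zeta|-\log|a||\leq\delta$ (a condition only on $u$, hence a single rectangle to which the discrepancy theorem applies directly), and then integrates by parts against the joint distribution function, so that the total error is $D_{\sigma}(T)\int\!\!\int|\partial^{2}h/\partial u\,\partial v|\,du\,dv$. That integral is $\asymp(\log\log T)^{2}$, with one factor of $\log\log T$ coming from the periodicity in $v$ (Lemma~\ref{ParInt2}) and the other from the logarithmic singularity at $u=\log|a|$ (via $\log(1/\delta)\asymp\log\log T$); the disk-slicing approach pays the periodicity cost multiplicatively on top of the $y$-range cost, which is where it falls short.
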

We should note that apart from the factor $(\log\log T)^2$, the error term in Theorem \ref{aux} is optimal in view of our bound for the discrepancy $D_{\sigma}(T)$ in Theorem \ref{discrepancy thm}.

There are two main ingredients in the proof of Theorem \ref{aux}. 
First, we use our result on $D_{\sigma}(T)$ to capture the main term.
Secondly, to control the error term we need a completely uniform (but not necessarily very good)
bound for the measure of those $t$ for which $\zeta(\sigma + it)$ is
very close to $a$. We achieve such an estimate by using the following $L^{2k}$ bound.
\begin{proposition} \label{apoint prop}
Let $\tfrac 12 < \sigma \leq 2$ be fixed. Let $a \in \mathbb{C}$. There
exists an absolute constant $C > 0$ such that for every real number $k > 0$
we have
$$
\frac1T\int_{T}^{2T} |\log |\zeta(\sigma + it) - a||^{2k} dt \ll  (C k)^{4k}.
$$
\end{proposition}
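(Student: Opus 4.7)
The strategy is to dominate $|\log|F(t)||^{2k}$ pointwise, with $F(t):=\zeta(\sigma+it)-a$, by a combination of small positive and negative powers of $|F(t)|$, and then integrate. Starting from the elementary inequality
\[
|y|^{2k}\ \le\ \Bigl(\frac{2k}{e\lambda}\Bigr)^{2k}\cosh(\lambda y),\qquad y\in\mathbb{R},\ \lambda>0,
\]
applied with $y=\log|F(t)|$, one obtains
\[
|\log|F(t)||^{2k}\ \le\ \Bigl(\frac{2k}{e\lambda}\Bigr)^{2k}\cdot\tfrac{1}{2}\bigl(|F(t)|^{\lambda}+|F(t)|^{-\lambda}\bigr).
\]
It therefore suffices to produce a fixed $\lambda_0>0$ (depending only on $\sigma$ and $a$) with
\[
\int_T^{2T}\bigl(|F(t)|^{\lambda_0}+|F(t)|^{-\lambda_0}\bigr)\,dt\ \ll\ T,
\]
which would in fact yield the stronger bound $(Ck)^{2k}$, a fortiori the stated $(Ck)^{4k}$.

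The positive-power integral is routine: for $\lambda_0\le 1$ one has $|F|^{\lambda_0}\le 1+|F|$, and Cauchy--Schwarz together with the classical second-moment estimate $\int_T^{2T}|\zeta(\sigma+it)|^2\,dt\ll_\sigma T$ (valid for every $\sigma>\tfrac12$) gives $\int_T^{2T}|F|^{\lambda_0}\,dt\ll T$.

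The substantive part of the proof is the negative-power integral, which is essentially equivalent to the uniform small-ball estimate
\[
\textup{meas}\bigl\{t\in[T,2T]:|\zeta(\sigma+it)-a|<r\bigr\}\ \ll\ T\cdot r^{c},\qquad 0<r\le 1,
\]
for some fixed $c>0$. To establish this I would work locally, partitioning $[T,2T]$ into unit intervals $I_j=[T_j,T_j+1]$ and studying, for each $j$, the disk $D_j:=B(\sigma+iT_j,1)$. On $D_j$ the standard convexity bound gives $|\zeta(s)-a|\ll T^{A}$, and Jensen's formula then shows that the number $N_j$ of $a$-points of $\zeta$ in $D_j$ satisfies $N_j\ll\log T$. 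Factoring $F(s)=G_j(s)\prod_{\nu}(s-z_{j,\nu})$ on $D_j$ with $G_j$ nonvanishing (whose modulus is bounded above and below on a concentric sub-disk by Harnack/Borel--Carath\'eodory), Cartan's covering lemma yields that $\{t\in I_j:|F(\sigma+it)|<r\}$ is covered by intervals of total length $\ll(r/m_j)^{1/N_j}$. Summing over $j$ and invoking the unconditional global count $\sum_j N_j\ll T$ (Jensen applied to a large rectangle in the strip), while truncating at some level $K$ to separate the rare $j$ with $N_j>K$ from the typical $j$ with $N_j\le K$, produces the small-ball estimate. Integrating against $\lambda_0 r^{-\lambda_0-1}\,dr$ then gives $\int_T^{2T}|F|^{-\lambda_0}\,dt\ll T$ and finishes the argument.

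The principal obstacle is precisely this local-to-global step: Cartan's exponent $1/N_j$ is weak in intervals containing clusters of $a$-points, and one must balance this degradation against the sparsity of such clusters, controlled only through the global count $\sum_j N_j\ll T$. The considerable slack between the obtained $(Ck)^{2k}$ and the stated $(Ck)^{4k}$ leaves room for much weaker variants --- for instance, allowing $\lambda_0\asymp 1/k$ at the cost of an additional $k^{2k}$ factor from $(2k/(e\lambda_0))^{2k}$ --- which would already suffice to prove the proposition.
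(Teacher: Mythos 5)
Your reduction via $|y|^{2k}\le\bigl(\tfrac{2k}{e\lambda}\bigr)^{2k}\cosh(\lambda y)$ is clean, and the positive-power integral is indeed routine. The difficulty is concentrated entirely in the claim $\int_T^{2T}|F|^{-\lambda_0}\,dt\ll T$ for a fixed $\lambda_0>0$, and there the Cartan sketch has a genuine gap. Note first that this claim is \emph{equivalent} (via the $\cosh$ inequality in one direction and term-by-term summation of $\sum_n\lambda_0^n|\log|F||^n/n!$ in the other) to the small-ball estimate $\mathrm{meas}\{t:|F(t)|<r\}\ll Tr^{c}$, which in turn is equivalent to the \emph{stronger} moment bound $(Ck)^{2k}$. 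So the route you are proposing must in particular prove strictly more than the Proposition; for comparison, the Proposition's $(Ck)^{4k}$ bound only yields, after optimizing Chebyshev over $k$, the weaker small-ball estimate $\mathrm{meas}\{|F|<r\}\ll T\exp\bigl(-c\sqrt{\log(1/r)}\bigr)$.

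The local-to-global step you flag does not close as written. Cartan gives $\mathrm{meas}\{t\in I_j:|F|<r\}\ll(r/m_j)^{1/N_j}$, but the only unconditional global input is $\sum_j N_j\ll T$, i.e.\ average $O(1)$ with individual $N_j$ possibly as large as $\log T$. Splitting at a threshold $K$ one gets at most $Tr^{1/K}$ from intervals with $N_j\le K$ and at most $T/K$ from the others; optimizing ($K\log K\sim\log(1/r)$) gives a bound of order $T\log\log(1/r)/\log(1/r)$. This is not $Tr^{c}$ — it is in fact weaker than what the Proposition itself implies. Plugging it into $\lambda_0\int_0^1 r^{-\lambda_0-1}\mathrm{meas}\{|F|<r\}\,dr$ (after $u=\log(1/r)$) produces $\int_0^\infty e^{\lambda_0 u}\log u\cdot u^{-1}\,du$, which diverges for every $\lambda_0>0$. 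Your fallback $\lambda_0\asymp 1/k$ does not repair this: for bounded $k$ it is the same claim, and for large $k$ the integrand $e^{\lambda_0 u}$ still dominates the logarithmic decay, so uniformity in $k$ fails. (There is also a secondary issue: the lower bound $m_j\gg1$ on the nonvanishing factor is not automatic — $G_j$ can be small if $a$-points crowd the boundary of the chosen disk — but even granting it the summation fails.)

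By contrast, the paper's proof never routes through a small-ball estimate. It uses a Landau-type lemma (Lemma~\ref{landau lem}) together with Jensen's formula to write $\log|\zeta(s)-a|=\sum_{\rho_a}\log|s-\rho_a|+O(\log M_r(s_0))$ (Lemma~\ref{first formula}); it bounds $\frac1T\int(\log M_r)^{2k}$ by $(Ck)^{2k}$ using the concavity of $x\mapsto(\log(x+e^{2k-1}))^{2k}$ and the classical mean-square bound for $\zeta$ (Lemmas~\ref{mean square} and~\ref{M bound}); and it bounds the moment of the sum of logarithms by Minkowski plus a direct $\Gamma$-function computation, at the cost of a factor $(\#\text{local }a\text{-points})^{2k}$ (Lemma~\ref{sum bound}). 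Each of the two pieces costs $k^{2k}$ — one from $\Gamma(2k+1)$, one from the $2k$-th moment of the zero count — and this is exactly why the exponent $4k$ appears and why the $(Ck)^{2k}$ target your strategy implicitly requires is out of reach by either method.
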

In order to study $a$-points to the left of the half-line, Selberg obtains a similar proposition when $\sigma = \tfrac 12$. His argument
depends on the rapid rate of change of the phase of $\zeta(\sigma + it)$ when $\sigma \leq \tfrac 12$
(for $\sigma<\tfrac12$ this follows from the Riemann hypothesis) and does not generalize
to any line with $\sigma > \tfrac 12$ (see \cite{Tsang}, Chapter 8, in particular the discussion on page 119). Our treatment depends on a careful use of Jensen's formula.
Proposition \ref{apoint prop} bears some resemblance to a result obtained by
Guo to study zeros of $\zeta'(s)$. Our result is more refined, in particular our treatment removes the loss of a power of $\log\log T$.

\section{Preliminary Lemmas}

In this section we collect together several preliminary results that will be useful in 
our subsequent work. 

\begin{lemma}[Lemma 2.2 of \cite{GranvilleSound}]\label{gs formula}
Let $\tfrac12 <\sigma \leq 1$ be fixed and 
$3\leq Y\leq T/2$. 
For $t \in [T, 2T]$ except outside a 
set of measure $\ll T^{5/4-\sigma/2} Y \log^5 T$
we have
\begin{equation} \label{dirichlet poly}
\log \zeta(\sigma+it)
=R_Y(\sigma+it)
+O\big(Y^{-(\sigma-1/2)/2} \log^3 T\big).
\end{equation}
\end{lemma}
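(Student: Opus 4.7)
The plan is the classical Perron's-formula-plus-zero-density approach: represent $R_Y(\sigma+it)$ as a contour integral, shift the contour to the left of $\mathrm{Re}(w)=0$ to pick up $\log\zeta(\sigma+it)$ as a residue, and then bound what remains. The exceptional set will be precisely the set of $t$ for which zeros of $\zeta$ obstruct the contour shift.

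The starting point is the absolutely convergent identity $\log\zeta(s+w)=\sum_{p^n} n^{-1}p^{-n(s+w)}$ valid for $\mathrm{Re}(s+w)>1$. Applying a truncated Perron formula with parameter $Y$ to this series gives, for a suitable $c>1-\sigma+1/\log Y$ and a truncation height $U$ polynomial in $\log T$,
\[
R_Y(\sigma+it)=\frac{1}{2\pi i}\int_{c-iU}^{c+iU}\log\zeta(\sigma+it+w)\,\frac{Y^w}{w}\,dw+O\bigl(\text{Perron tail}\bigr),
\]
where the tail is negligible for $U$ sufficiently large. I would next shift the contour to the vertical line $\mathrm{Re}(w)=-\eta$ with $\eta=(\sigma-\tfrac12)/2$. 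The simple pole at $w=0$ contributes exactly $\log\zeta(\sigma+it)$, which is the main term. The horizontal segments contribute lower-order error using a convexity bound for $\log\zeta$.

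The shift is legitimate only when $\zeta$ has no zeros in the rectangle $[\sigma-\eta,\sigma+c]\times[t-U,t+U]$. Zeros with real part at least $\sigma-\eta=\sigma/2+1/4$ are controlled by a classical zero-density estimate of the shape $N(\alpha,T)\ll T^{c_1(1-\alpha)}\log^{c_2}T$; after excluding a $U$-neighborhood of the ordinate of each such zero from $[T,2T]$, one is left with an exceptional set of measure $\ll T^{5/4-\sigma/2}Y\log^5 T$, where the $Y$-factor and $\log^5 T$ come from the choices of $U$ and of the zero-density exponent. Off this exceptional set, the integral on the shifted line is bounded by
\[
Y^{-\eta}\int_{-U}^{U}\frac{|\log\zeta(\sigma-\eta+i(t+v))|}{1+|v|}\,dv\ll Y^{-(\sigma-1/2)/2}\log^3 T,
\]
using a standard mean-value bound for $\log\zeta$ on the line $\mathrm{Re}(s)=\sigma-\eta>1/2$, which yields the claimed error. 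The main obstacle, and the only genuinely delicate point, is calibrating the parameters $c$, $U$, and the zero-density exponent so that the Perron tail, the exceptional-set measure, and the shifted-contour error all fit inside the stated bounds simultaneously; everything else is routine.
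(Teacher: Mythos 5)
Your approach — Perron's formula for $R_Y(\sigma+it)$, a leftward contour shift picking up $\log\zeta(\sigma+it)$ as the residue at $w=0$, and a zero-density estimate to control the set of $t$ for which zeros obstruct the shift — is the same approach that Granville and Soundararajan use for the cited Lemma 2.2; the present paper simply quotes their result without reproving it.

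Two points in your sketch are imprecise enough to be worth flagging. First, your truncation height $U$ cannot be taken to be ``polynomial in $\log T$'' independently of $Y$: the factor $Y$ in the exceptional-set measure $T^{5/4-\sigma/2}Y\log^5 T$ arises precisely because $U$ must be taken comparable to $Y$ to kill the Perron tail, so $U\asymp Y$ is polynomial in $\log T$ only in the regime $Y=(\log T)^{O(1)}$ used later in the paper, not for general $Y\leq T/2$. Second, and more substantively, you claim to control the shifted integral
\[
Y^{-\eta}\int_{-U}^{U}\frac{|\log\zeta(\sigma-\eta+i(t+v))|}{1+|v|}\,dv
\]
``using a standard mean-value bound for $\log\zeta$.'' A mean-value bound only gives $L^1$ or $L^2$ control on average over $t$, and feeding that into Chebyshev would enlarge the exceptional set to size roughly $T/\log T$, far larger than the $T^{5/4-\sigma/2}Y\log^5T$ you need. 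What is actually required — and what the zero-density exclusion is designed to deliver — is a pointwise bound $|\log\zeta(\sigma-\eta+iu)|\ll\log T$ for all $u$ in a window around $t$, obtained from the absence of zeros in a slightly wider rectangle via a Borel--Carath\'eodory/Jensen-type argument (one also wants to exclude zeros up to height $\pm 2U$ rather than $\pm U$ so the window used in the integral stays interior to the zero-free rectangle). With that correction the calibration of $c$, $U$, $\eta$, and the zero-density exponent $3(1-\alpha)/(2-\alpha)$ at $\alpha=\sigma/2+1/4$ does close, and the argument goes through as Granville--Soundararajan carry it out.
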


\begin{lemma} \label{diagonal}
Let $ 2 \leq y \leq z$.
For any positive integer $k$ that is
$\leq \log T/(3 \log z)$ we have
\[
\frac1T \int_T^{2T} \bigg|\sum_{y \leq p \leq z}
\frac{1}{p^{\sigma+it}} \bigg|^{2k} dt
\ll k! \bigg( \sum_{y \leq p \leq z}\frac{1}{p^{2\sigma}}\bigg)^k+T^{-1/3}.
\]
Additionally, for any positive integer $k$
we have
\[
\mathbb E\Bigg( \bigg|\sum_{y \leq p \leq z} \frac{X(p)}{p^{\sigma}} \bigg|^{2k} \Bigg)
\ll k! \bigg( \sum_{y \leq p \leq z}\frac{1}{p^{2\sigma}}\bigg)^k.
\]
\end{lemma}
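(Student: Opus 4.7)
The plan is a direct second-moment type computation: expand the $2k$-th power by the multinomial theorem, and then either use orthogonality (in the random case) or the near-orthogonality of $(p^{-it})_p$ on $[T,2T]$ (in the arithmetic case) to isolate the diagonal contribution.

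Write $S(t) := \sum_{y \leq p \leq z} p^{-\sigma-it}$. Expanding by the multinomial theorem gives
\[
S(t)^k \;=\; \sum_{\alpha} \binom{k}{\alpha} \frac{1}{m_\alpha^{\sigma+it}},
\qquad m_\alpha := \prod_p p^{\alpha_p},
\qquad \binom{k}{\alpha} := \frac{k!}{\prod_p \alpha_p!},
\]
where $\alpha=(\alpha_p)_{y \leq p \leq z}$ ranges over nonnegative integer tuples with $\sum_p \alpha_p = k$. Note that $m_\alpha$ determines $\alpha$ by unique factorization. The analogous expansion of $(\sum_p X(p)/p^\sigma)^k$ replaces $p^{-\alpha_p it}$ by $X(p)^{\alpha_p}$.

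For the random case, $\mathbb{E}[X(p)^j\overline{X(p)}^l]=\mathbf{1}_{j=l}$ annihilates every cross-term with $\alpha \neq \beta$, leaving
\[
\mathbb{E}\Bigg|\sum_{y\leq p\leq z}\frac{X(p)}{p^{\sigma}}\Bigg|^{2k}
\;=\; \sum_\alpha \binom{k}{\alpha}^{\!2}\frac{1}{m_\alpha^{2\sigma}}
\;\leq\; k!\sum_\alpha \binom{k}{\alpha}\frac{1}{m_\alpha^{2\sigma}}
\;=\; k!\Bigg(\sum_{y\leq p\leq z}\frac{1}{p^{2\sigma}}\Bigg)^{\!k},
\]
where the middle inequality uses $\binom{k}{\alpha}\leq k!$ (since $\prod_p \alpha_p!\geq 1$) and the last equality is another application of the multinomial theorem.

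For the arithmetic case, integrating $|S(t)|^{2k} = S(t)^k\overline{S(t)^k}$ term by term, the diagonal terms $\alpha=\beta$ contribute exactly the right-hand side just computed. For $\alpha\neq\beta$, the average $\frac{1}{T}\int_T^{2T}(m_\beta/m_\alpha)^{it}\,dt$ is bounded by $(T|\log(m_\beta/m_\alpha)|)^{-1}$. Since $m_\alpha,m_\beta$ are distinct positive integers bounded by $z^k$, one has $|\log(m_\beta/m_\alpha)|\geq 1/\max(m_\alpha,m_\beta)\geq z^{-k}$, whence each off-diagonal term is $\ll z^k/T$. Summing absolute values of all coefficients,
\[
\text{off-diagonal} \;\ll\; \frac{z^k}{T}\Bigg(\sum_{\alpha}\binom{k}{\alpha}\frac{1}{m_\alpha^\sigma}\Bigg)^{\!2}
\;=\; \frac{z^k}{T}\Bigg(\sum_{y\leq p\leq z}\frac{1}{p^\sigma}\Bigg)^{\!2k}.
\]
The hypothesis $k \leq \log T/(3\log z)$ gives $z^k \leq T^{1/3}$; combined with the classical bound $\sum_{p\leq z} p^{-\sigma}\ll z^{1-\sigma}$ (up to logarithmic factors, in the regime $1/2\leq \sigma\leq 1$), this off-diagonal contribution becomes $\ll z^{k(3-2\sigma)}/T \leq T^{(3-2\sigma)/3 - 1} = T^{-2\sigma/3}\ll T^{-1/3}$. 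The degenerate case of very small $z$ (where the logarithmic saving would degrade) is handled trivially: $S(t)$ is then bounded by an absolute constant, and so is $|S(t)|^{2k}$ up to an exponential factor absorbed by $k!$.

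No step is a real obstacle; this is a routine moment computation. The only care needed is to check that the polynomial-in-$z$ factors arising from the prime sum, once raised to the $2k$-th power, do not overwhelm the gain $1/T$. This is precisely what the restriction $k \leq \log T/(3\log z)$ guarantees, by keeping $z^{3k} \leq T$ and hence every factor $z^{k(3-2\sigma)}$ below $T^{(3-2\sigma)/3}\leq T^{2/3}$.
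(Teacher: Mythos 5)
Your proof of the random-variable assertion is the same argument the paper gives (multinomial expansion, orthogonality of the $X(p)$'s, and the bound $\binom{k}{\alpha}\le k!$), just dressed in multinomial-coefficient notation rather than the paper's sum over $(p_1,\dots,p_k,q_1,\dots,q_k)$.

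For the first assertion the paper does not supply a proof at all — it cites Lemma~4.2 of \cite{LamzouriLarge} — so your argument is an independent one, and it is the right general shape (expand, separate diagonal from off-diagonal, bound the off-diagonal via $\frac1T\int_T^{2T}(m_\beta/m_\alpha)^{it}\,dt \ll (T|\log(m_\beta/m_\alpha)|)^{-1}$). However, the step ``$\ll z^{k(3-2\sigma)}/T \leq T^{(3-2\sigma)/3-1}=T^{-2\sigma/3}\ll T^{-1/3}$'' is not quite rigorous as written. You are raising the estimate $\sum_{p\leq z}p^{-\sigma}\ll_\sigma z^{1-\sigma}$ to the $2k$-th power, and with $k$ as large as $\log T/(3\log z)$, the implied $\sigma$-dependent constant $C_\sigma$ becomes $C_\sigma^{2k}=T^{2\log C_\sigma/(3\log z)}$ — a genuine positive power of $T$ whenever $z$ is bounded. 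Thus $C_\sigma^{2k}T^{-2\sigma/3}$ need not be $\ll T^{-1/3}$. The fix is to observe that the prime number theorem in fact gives $\sum_{p\leq z}p^{-\sigma}\leq z^{1-\sigma}$ with constant $1$ once $z\geq z_0(\sigma)$ (the $1/\log z$ saving absorbs the $1/(1-\sigma)$), so the above chain is valid for $z$ above that threshold; for $z<z_0(\sigma)$ one must argue separately, e.g.\ by noting that $|S(t)|^{2k}\leq \bigl(\sum p^{-\sigma}\bigr)^{2k}\leq \pi([y,z])^{k}\bigl(\sum p^{-2\sigma}\bigr)^{k}\ll_\sigma k!\bigl(\sum p^{-2\sigma}\bigr)^{k}$ by Cauchy--Schwarz and Stirling. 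Your ``degenerate case'' remark gestures at this but does not spell it out, and as currently phrased (``$|S(t)|^{2k}$ ... absorbed by $k!$'') it does not make clear that one must compare against $\bigl(\sum p^{-2\sigma}\bigr)^{k}$, which is bounded \emph{below} when $z$ is bounded. A cleaner route that sidesteps all of this is to apply the Montgomery--Vaughan mean value theorem to the Dirichlet polynomial $S(t)^{k}$ of length $z^{k}\leq T^{1/3}$, which gives the first assertion directly without any $T^{-1/3}$ term at all.
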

\begin{proof}
The first assertion of the lemma is
Lemma 4.2 of \cite{LamzouriLarge}.  
Next, note that
\[
\mathbb E\Bigg( \bigg|\sum_{y \leq p \leq z} \frac{X(p)}{p^{\sigma}} \bigg|^{2k} \Bigg)
=\sum_{\substack{y \leq p_1, \ldots, p_k \leq z \\
y \leq q_1, \ldots, q_k \leq z}}
\frac{1}{(p_1 \cdots p_k q_1 \cdots q_k)^{\sigma}}
\mathbb E\Big(X(p_1)\cdots X(p_k) \overline{X(q_1)
\cdots X(q_k)}\Big).
\]
Since the $X(p)$'s are independent random variables
uniformly distributed
on the unit circle the
only terms that contribute to the above sum are
those where $p_1\cdots p_k=q_1\cdots q_k$. The
contribution from these terms is
\[
\ll
k! \bigg( \sum_{ y \leq p \leq z} \frac{1}{p^{2\sigma}}\bigg)^{k}.
\]

\end{proof}

\begin{lemma} \label{large sieve} Let $\tfrac12 < \sigma < 1$ and $A\geq 1$ be fixed. Also, let $Y=(\log T)^A$ and  $k $ be an integer that satisfies $2 \le k \leq  \log T/(6 A \log \log T)$.
Then there exists a constant $a(\sigma)>0$ such that
\[
\frac1T \int_T^{2T} \left|R_Y(\sigma+it)\right|^{2k} dt 
\ll \bigg(\frac{a(\sigma) k^{1-\sigma}}{ (\log k )^{\sigma}}\bigg)^{2k}.
\]
Additionally, for any integer $k \ge 2$ we have
\[
\mathbb E\left(\left|R_Y(\sigma,X)\right|^{2k} \right)
\ll \bigg(\frac{a(\sigma) k^{1-\sigma}}{(\log k)^{\sigma}} \bigg)^{2k}.
\]
\end{lemma}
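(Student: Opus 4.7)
I begin by separating the primes from the higher prime powers: write $R_Y = R_Y^{(1)} + R_Y^{(\ge 2)}$, where
\[
R_Y^{(1)}(s) := \sum_{p \leq Y} \frac{1}{p^{s}}, \qquad R_Y^{(\ge 2)}(s) := \sum_{\substack{p^n \leq Y \\ n \geq 2}} \frac{1}{n p^{ns}}.
\]
Since $\sigma > \tfrac12$, the double series $\sum_{n \geq 2}\sum_p (n p^{n\sigma})^{-1}$ converges absolutely, and hence both $R_Y^{(\ge 2)}(\sigma + it)$ and $R_Y^{(\ge 2)}(\sigma, X)$ are pointwise $O_\sigma(1)$. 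By Minkowski's inequality for the $L^{2k}$ norm, it therefore suffices to establish the two claimed bounds for the pure prime sum $R_Y^{(1)}$.

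The core idea is a two-scale decomposition. For a threshold $P = P(k)$ to be chosen, split $R_Y^{(1)}(\sigma + it) = S_1(t) + S_2(t)$ with $S_1(t) := \sum_{p \le P} p^{-(\sigma + it)}$ and $S_2(t) := \sum_{P < p \le Y} p^{-(\sigma + it)}$. For the small-prime piece I take the pointwise bound
\[
|S_1(t)| \;\leq\; \sum_{p \leq P} p^{-\sigma} \;\ll_\sigma\; \frac{P^{1-\sigma}}{\log P},
\]
which is a routine consequence of the Prime Number Theorem and partial summation. For $S_2$ I invoke Lemma \ref{diagonal}; its hypothesis $k \leq \log T/(3 \log Y)$ is implied by the assumption $k \leq \log T/(6 A \log \log T)$ because $Y = (\log T)^A$. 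Using the companion partial-summation estimate $\sum_{p > P} p^{-2\sigma} \ll_\sigma P^{1-2\sigma}/\log P$ together with Stirling's formula yields
\[
\bigg(\frac{1}{T}\int_T^{2T} |S_2(t)|^{2k}\, \mathd t\bigg)^{1/(2k)} \;\ll_\sigma\; k^{1/2} \cdot \frac{P^{(1-2\sigma)/2}}{(\log P)^{1/2}},
\]
the error $T^{-1/3}$ in Lemma \ref{diagonal} contributing only $(\log T)^{-A}$ after taking the $(2k)$-th root, which is negligible. Choosing $P = c(\sigma)\, k \log k$ with $c(\sigma)$ sufficiently small (so that $P \leq Y$ throughout the allowed range of $k$) makes both $|S_1(t)|$ and the displayed $L^{2k}$ norm of $S_2$ of order $k^{1-\sigma}/(\log k)^\sigma$. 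Minkowski then delivers the first assertion. The random-model estimate is proved identically, substituting the second half of Lemma \ref{diagonal} (which carries no constraint on $k$) for the first.

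The main obstacle --- and the reason the argument is not a one-line invocation of Lemma \ref{diagonal} --- is to obtain $(\log k)^\sigma$ rather than the weaker $(\log k)^{1/2}$ in the denominator. A direct application of Lemma \ref{diagonal} to the whole prime sum would bound $\frac{1}{T}\int_T^{2T}|R_Y^{(1)}|^{2k} \mathd t$ by $k! \cdot (\sum_p p^{-2\sigma})^k \ll_\sigma (C(\sigma) k)^k$, yielding only the bare $k^{1/2}$ and losing the crucial logarithmic saving --- which is the entire gain exploited in the $(\log T)^{\sigma}$ range of Proposition \ref{complex}. The two-scale split at $P \asymp k \log k$ reflects the fact that a very short Dirichlet polynomial has $L^{2k}$ norm governed by its $\ell^1$ norm, while a longer one has $L^{2k}$ norm governed by its (much smaller) $\ell^2$ norm; balancing these two regimes produces exactly the right power of $\log k$.
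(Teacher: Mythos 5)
Your proof is correct and follows essentially the same strategy as the paper's: split off the bounded prime-power tail, then decompose the prime sum at the threshold $P \asymp k\log k$, using the pointwise $\ell^1$ bound for $p \le P$ and the $L^{2k}$ diagonal bound (Lemma \ref{diagonal}) plus PNT for $P < p \le Y$. The paper combines the pieces via the elementary inequality $|a+b+c|^{2k} \le 9^k(|a|^{2k}+|b|^{2k}+|c|^{2k})$ rather than Minkowski, but the split point and the resulting $k^{1-\sigma}(\log k)^{-\sigma}$ balance are identical.
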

\begin{proof}
We will only prove the first assertion;
the second follows from a similar argument.
Plainly,
\begin{equation} \label{diophantine bd}
\begin{split}
\int_T^{2T} \left|R_Y(\sigma+it) \right|^{2k} dt  \leq & 9^k \bigg(\int_T^{2T} \bigg|
\sum_{p \leq k \log k} \frac{1}{p^{\sigma+it}} \bigg|^{2k} dt
+
\int_T^{2T} \bigg|
\sum_{k \log k \leq p \leq Y} \frac{1}{p^{\sigma+it}} \bigg|^{2k} dt\\
& \qquad
+O(T \log^{2k} \zeta(2\sigma))\bigg).
\end{split}
\end{equation}
By Lemma \ref{diagonal} and the prime number theorem
we have
\[
\frac1T \int_T^{2T} \bigg|\sum_{k \log k \leq p \leq Y} 
\frac{1}{p^{\sigma+it}} \bigg|^{2k} dt \ll k! \bigg(\sum_{k \log k
\leq p \leq Y} \frac{1}{p^{2\sigma}} \bigg)^{k}+T^{-1/3} \ll 
k^k \bigg(
\frac{(k \log 2k)^{1-2\sigma}}{(2\sigma-1) \log k}\bigg)^{k}.
\]
Next, note that for fixed $1/2 < \sigma<1$
\[
\frac1T \int_T^{2T} \bigg|\sum_{p \leq k \log k} 
\frac{1}{p^{\sigma+it}} \bigg|^{2k} dt \leq 
 \bigg(\sum_{p \leq k \log 2k} \frac{1}{p^{\sigma}} \bigg)^{2k}
\ll  \bigg(\frac{(k \log k)^{1-\sigma}}{(1-\sigma)\log k}
\bigg)^{2k},
\]
by the prime number theorem. 
Inserting the two estimates above into \eqref{diophantine bd} completes the proof.
\end{proof}

\begin{lemma}\label{poly large deviation}
Let $\tfrac12 <\sigma<1$ and $A\geq 1$ be fixed, and  let $Y=(\log T)^A$. Then there exists a constant $B=B(\sigma,A)$ such that  
$$ \pr_T\left(\left|R_Y(\sigma+it)\right|\geq \frac{(\log T)^{1-\sigma}}{\log\log T}\right)\ll \exp\left(-B\frac{\log T}{\log \log T}\right) $$
and $$ \pr\left(\left|R_Y(\sigma,X)\right|\geq \frac{(\log T)^{1-\sigma}}{\log\log T}\right)\ll \exp\left(-B\frac{\log T}{\log \log T}\right).$$
\end{lemma}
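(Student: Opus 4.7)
The plan is to apply Markov's inequality at a high moment and optimize, with the moment estimate from Lemma \ref{large sieve} as the engine. I treat both statements simultaneously, since the second part of Lemma \ref{large sieve} gives the same bound for the random model.

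First I would write, for any positive integer $k$,
\[
\pr_T\!\left(|R_Y(\sigma+it)|\geq V\right)\leq V^{-2k}\cdot \frac{1}{T}\int_T^{2T}|R_Y(\sigma+it)|^{2k}\,dt
\]
and similarly for the random variable $|R_Y(\sigma,X)|$. For $V:=(\log T)^{1-\sigma}/\log\log T$ and $2\leq k\leq \log T/(6A\log\log T)$, Lemma \ref{large sieve} gives that both right-hand sides are bounded by
\[
\left(\frac{a(\sigma)\,k^{1-\sigma}}{V\,(\log k)^{\sigma}}\right)^{2k}.
\]

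Next I would optimize in $k$. The natural scale is $k\asymp \log T/\log\log T$: setting $k=\lfloor c\log T/\log\log T\rfloor$ for a small constant $c=c(\sigma,A)>0$ (in particular $c\leq 1/(6A)$ so the admissibility range of Lemma \ref{large sieve} is respected), one has $k^{1-\sigma}\asymp c^{1-\sigma}(\log T)^{1-\sigma}/(\log\log T)^{1-\sigma}$ and $\log k=(1+o(1))\log\log T$, hence
\[
\frac{a(\sigma)\,k^{1-\sigma}}{V\,(\log k)^{\sigma}}= (1+o(1))\,a(\sigma)\,c^{\,1-\sigma}.
\]
Choosing $c$ small enough in terms of $\sigma$ and $A$, this quantity is at most $\tfrac12$ for $T$ large, so the above moment bound becomes $(1/2)^{2k}=\exp(-2k\log 2)\leq \exp(-B\log T/\log\log T)$ with $B=B(\sigma,A)>0$.

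The only point requiring mild care is that $c$ must be chosen \emph{after} $\sigma$ and $A$ so as to dominate the implicit constant $a(\sigma)$ from Lemma \ref{large sieve}; there is no substantive obstacle, since both the arithmetic and the probabilistic moment bounds carry the same implicit constant. Combining the two applications of Markov's inequality yields the two stated estimates simultaneously.
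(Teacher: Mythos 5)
Your proof is correct and is essentially the same argument as in the paper: Markov's inequality at level $2k$, the moment bound from Lemma \ref{large sieve}, and the choice $k\asymp\log T/\log\log T$ with a small multiplier ensuring both admissibility ($k\le\log T/(6A\log\log T)$) and that the base of the exponential is bounded strictly below $1$. The paper's concrete choice $k=[\log T/(C_1\log\log T)]$ with $C_1=6A(1+C)^{1/(1-\sigma)}$ is exactly one valid instance of your ``choose $c$ small enough,'' so there is nothing to add.
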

\begin{proof}
We will only prove the first assertion; the second follows from a similar argument. 
Let $2 \le k\leq \log T/(6A\log\log T)$ be an integer. Then, Lemma \ref{large sieve} implies that 
\begin{align*}
 \pr_T\left(\left|R_Y(\sigma+it)\right|\geq \frac{(\log T)^{1-\sigma}}{\log\log T}\right) &\leq \left(\frac{(\log T)^{1-\sigma}}{\log\log T}\right)^{-2k} \frac1T \int_T^{2T} \left|R_Y(\sigma+it)\right|^{2k} dt\\
& \ll  \left( \frac{C k^{1 - \sigma} \log\log T}{(\log k)^{\sigma} (\log T)^{1-\sigma}} \right)^{2k}.
\end{align*}
Choosing $k= [\log T/(C_1\log\log T)]$, where $C_1=6A (1+C)^{1/(1-\sigma)}$, yields the desired bound.
\end{proof}

\begin{lemma} \label{transition lemma}
Let $\tfrac12 <\sigma\leq 1$ and $A\geq 1$ be fixed, and  let $Y=(\log T)^A$. Then, for any positive integers $k$, $\ell$ such that 
$k+\ell\leq (\log T)/(6 A \log\log T)$, we have
\begin{align*}
& \frac{1}{T}
\int_{T}^{2T} \bigg (R_Y(\sigma+it)\bigg)^{k}
\cdot \bigg (\overline{R_Y(\sigma+it)} \bigg )^{\ell} dt
\\ 
& \qquad \qquad \qquad \qquad \qquad 
 = \mathbb{E} \bigg( \bigg(R_Y(\sigma,X)\bigg)^{k}
\cdot \bigg (\overline{R_Y(\sigma,X)}\bigg )^{\ell}
\bigg ) + O \bigg ( \frac{Y^{k + \ell}}{\sqrt{T}} \bigg ). 
\end{align*}
\end{lemma}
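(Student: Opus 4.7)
\textbf{Proof proposal for Lemma \ref{transition lemma}.}

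The plan is the standard orthogonality / diagonal-extraction argument for short Dirichlet polynomials. Writing $R_Y(\sigma+it) = \sum_{q \le Y} \alpha_q q^{-it}$, where the sum runs over prime powers $q = p^n \le Y$ and $\alpha_q := 1/(n p^{n\sigma})$, I would expand
\[
\bigl(R_Y(\sigma+it)\bigr)^{k}\bigl(\overline{R_Y(\sigma+it)}\bigr)^{\ell} = \sum_{\substack{q_1,\ldots,q_k \le Y \\ q_1', \ldots, q_\ell' \le Y}} \biggl(\prod_{i} \alpha_{q_i} \prod_{j} \alpha_{q_j'}\biggr) \biggl(\frac{Q'}{Q}\biggr)^{it},
\]
with $Q := q_1 \cdots q_k$ and $Q' := q_1' \cdots q_\ell'$, and split the resulting $t$-integral over $[T,2T]$ into the diagonal contribution ($Q = Q'$) and the off-diagonal contribution ($Q \ne Q'$).

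On the diagonal $(Q'/Q)^{it} \equiv 1$, so these terms contribute exactly $\sum_{Q=Q'} \prod \alpha_{q_i} \prod \alpha_{q_j'}$ to the normalized integral. Carrying out the same expansion of $\mathbb{E}\bigl[R_Y(\sigma,X)^k \overline{R_Y(\sigma,X)}^\ell\bigr]$ and using the independence of the $X(p)$'s across primes together with $\mathbb{E}[X(p)^n \overline{X(p)^m}] = \delta_{n,m}$, the only surviving terms on the random side are those for which, for every prime $p$, the combined $n_i$-exponent equals the combined $m_j$-exponent. By unique factorization this is precisely the condition $Q = Q'$, so the expectation produces the very same diagonal sum and the main terms agree identically.

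For the off-diagonal, I would use the standard oscillatory estimate
\[
\left|\frac{1}{T} \int_T^{2T} \bigl(Q'/Q\bigr)^{it}\,dt\right| \le \frac{2}{T\,|\log(Q'/Q)|}
\]
combined with $|\log(Q'/Q)| \ge 1/\max(Q,Q')$, valid for distinct positive integers, which yields a factor bounded by $Y^{\max(k,\ell)}/T$. Multiplying by the crude bound $\sum_{q \le Y} \alpha_q \ll Y$ raised to the $(k+\ell)$-th power produces an off-diagonal error of size $O(Y^{2(k+\ell)}/T)$. The hypothesis $k+\ell \le (\log T)/(6 A \log \log T)$ with $Y = (\log T)^A$ forces $Y^{k+\ell} \le T^{1/6}$, so this simplifies to $O(Y^{k+\ell} \cdot T^{-5/6})$, which is absorbed into the claimed error $O(Y^{k+\ell}/\sqrt{T})$.

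The argument is essentially routine; the only conceptual step is recognizing that the diagonal condition extracted from the time average (integer equality $Q = Q'$) and the diagonal condition extracted from the random average (equality of prime-exponent multisets) coincide via unique factorization, and the only bookkeeping to check is that the crude off-diagonal bound $Y^{2(k+\ell)}/T$ fits inside $Y^{k+\ell}/\sqrt{T}$ in the admissible range of $k+\ell$.
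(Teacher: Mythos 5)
Your proof is correct: the diagonal terms on both sides coincide exactly because the condition $Q=Q'$ from $t$-orthogonality matches the condition $\mathbb{E}[\prod_p X(p)^{N_p}\overline{X(p)}^{M_p}]\neq 0$ by unique factorization, and your off-diagonal bound $Y^{2(k+\ell)}/T \le Y^{k+\ell}/\sqrt{T}$ is justified since $Y^{k+\ell}\le T^{1/6}$ in the stated range of $k+\ell$. The paper itself gives no proof but simply cites Lemma 3.4 of Tsang's thesis; the argument behind that reference is precisely the standard mean-value computation you have written out.
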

\begin{proof}
See Lemma 3.4 in Tsang's thesis \cite{Tsang}.
\end{proof}

\section{Complex moments of $\zeta(\sigma+it)$: Proofs of
Theorems \ref{characteristic thm} and \ref{complex theorem}}

We begin by proving Proposition  \ref{complex}.
\begin{proof}[Proof of Proposition \ref{complex}]
Let $k=\max\{|z_1|, |z_2|\}$, and $N=[\log T/(D(\log\log T))]$ where $D$ is a suitably large constant. 
Then, we have 
\begin{equation}\label{taylor}
\begin{aligned}
&\frac{1}{T} \int_{\mathcal{A}(T)} \exp\Big(z_1 R_Y(\sigma+it)+z_2\overline{R_Y(\sigma+it)}\Big) dt\\
&= \sum_{j+\ell\leq N}\frac{z_1^jz_2^{\ell}}{j!\ell!}  \frac{1}{T} \int_{\mathcal A(T)} \bigg(R_Y(\sigma+it)\bigg)^j\bigg(\overline{R_Y(\sigma+it)}\bigg)^{\ell} dt +E_1
\end{aligned}
\end{equation}
where 
\begin{align*}E_1&\ll \sum_{j+\ell\geq N} \frac{k^{j+\ell}}{j!\ell!}\frac{1}{T} 
\int_{\mathcal{A}(T)} \big|R_Y(\sigma+it)\big|^{j+\ell}\mathd t\leq \sum_{n\geq N} \frac{k^n}{n!} \left(\frac{(\log T)^{1-\sigma}}{\log\log T}\right)^n\sum_{j\leq n}\frac{n!}{j!(n-j)!}\\
&\leq \sum_{n\geq N}\frac{1}{n!}\left(\frac{2k(\log T)^{1-\sigma}}{\log\log T}\right)^{n}\leq \sum_{n\geq N}\left(\frac{6k(\log T)^{1-\sigma}}{N\log\log T}\right)^{n}\ll e^{-N},
\end{align*}
using Stirling's formula along with the fact that  
$\big|R_Y(\sigma+it)\big|\leq (\log T)^{1-\sigma}/\log\log T$ 
for $t\in \mathcal{A}(T)$.

Let $\mathcal S(T)=\{T \le t \le 2T: t \notin \mathcal A(T)\}$. 
If $j+\ell\leq N$ then using Lemmas  \ref{large sieve} and \ref{poly large deviation} along with the Cauchy-Schwarz inequality we  get
\begin{align*}
&\frac{1}{T} \int_{\mathcal{S}(T)} \bigg(R_Y(\sigma+it)\bigg)^j\bigg(\overline{R_Y(\sigma+it)}\bigg)^{\ell} dt\\
&\leq \left(\frac{\text{meas}(\mathcal{S}(T))}{T}\right)^{1/2}\left(\frac{1}{T} \int_{T}^{2T} \big|R_Y(\sigma+it)\big|^{2(j+\ell)} dt\right)^{1/2} \\
&\ll \exp\left(-B\frac{\log T}{2\log\log T}\right) 
\left(C\frac{(j+\ell)^{1-\sigma}}{(\log (j+\ell+2))^{\sigma}}\right)^{j+\ell},\\
\end{align*}
for some positive constants $B=B(\sigma, A)$ and $C=C(\sigma)$.
Inserting this bound in equation \eqref{taylor} we deduce
\begin{equation}\label{poly bd 1}
\begin{aligned}
&\frac{1}{T} \int_{\mathcal{A}(T)} \exp\Big(z_1R_Y(\sigma+it)+z_2\overline{R_Y(\sigma+it)}\Big) dt\\
&= \sum_{j+\ell\leq N}\frac{z_1^jz_2^{\ell}}{j!\ell!}  \frac{1}{T} \int_T^{2T}  \bigg(R_Y(\sigma+it)\bigg)^j\bigg(\overline{R_Y(\sigma+it)}\bigg)^{\ell}dt +E_2,
\end{aligned}
\end{equation}
where 
\begin{equation}\label{poly bd 2}
\begin{aligned}
E_2 
&\ll \exp\left(-B\frac{\log T}{2\log\log T}\right) \sum_{j+\ell\leq N}\frac{k^{j+\ell}}{j!\ell!}\left(C\frac{(j+\ell)^{1-\sigma}}{(\log (j+\ell+2))^{\sigma}}\right)^{j+\ell} +e^{-N}\\
&\ll \exp\left(-B\frac{\log T}{2\log\log T}\right) \sum_{n\leq N} \frac{1}{n!}\left(2C\frac{k N^{1-\sigma}}{(\log (N+2))^{\sigma}}\right)^{n}+e^{-N}\\
&\ll \exp\left(-B\frac{\log T}{2\log\log T}\right) \exp\left(2C\frac{k N^{1-\sigma}}{(\log (N+2))^{\sigma}} \right)+e^{-N}\\
& \ll \exp\left(-B\frac{\log T}{4\log\log T}\right) +e^{-N},
\end{aligned}
\end{equation}
if $D$ is suitably large and $k\leq c_0 (\log T)^{\sigma}$ where $c_0$ is suitably small. 

Now,  for all $j+\ell\leq N$, we have by Lemma \ref{transition lemma} that 
\begin{align*}
\frac{1}{T} \int_{T}^{2T}  \bigg(R_Y(\sigma+it)\bigg)^j\bigg(\overline{R_Y(\sigma+it)}\bigg)^{\ell} dt =& \ex\bigg( \bigg(R_Y(\sigma,X)\bigg)^j\bigg(\overline{R_Y(\sigma,X)}\bigg)^{\ell}\bigg)\\
 &+O\left(\frac{Y^{j+\ell}}{\sqrt{T}}\right).\\
\end{align*}
Note that $Y^{2N}\ll T^{1/4}$ if $D$  is suitably large. By this,
\eqref{poly bd 1}, and \eqref{poly bd 2} we obtain
\begin{equation}\label{approx random 1}
\begin{aligned}
&\frac{1}{T} \int_{\mathcal{A}(T)} \exp\Big(z_1 R_Y(\sigma+it)+z_2\overline{R_Y(\sigma+it)}\Big) dt\\
&= \sum_{j+\ell\leq N}\frac{z_1^jz_2^{\ell}}{j!\ell!}\ex\bigg( \bigg(R_Y(\sigma,X)\bigg)^j\bigg(\overline{R_Y(\sigma,X)}\bigg)^{\ell}\bigg) + O\left(\exp\left(-\frac B4\frac{\log T}{\log\log T}\right)\right).
\end{aligned}
\end{equation}
Furthermore, by Lemma \ref{large sieve}, 
$$  \ex\left(\left|R_Y(\sigma,X)\right|^k\right)\ll \left(C\frac{k^{1-\sigma}}{(\log k)^{\sigma}}\right)^k,$$
for all $k\geq 2$. 
Therefore, the main term on the right-hand side of \eqref{approx random 1} equals 
$$
\ex\left(\exp\Big(z_1 R_Y(\sigma,X)+z_2\overline{R_Y(\sigma,X)}\Big)\right)+ E_3,
$$
where 
$$E_3\ll \sum_{n\geq N} \frac{1}{n!}\left(\frac{2Ck n^{1-\sigma}}{(\log n)^{\sigma}}\right)^n\leq \sum_{n\geq N} \left(\frac{6Ck}{(n\log n)^{\sigma}}\right)^n\leq \sum_{n\geq N} \left(\frac{6Ck}{(N\log N)^{\sigma}}\right)^n\ll e^{-N}.
$$
This completes the proof.

\end{proof}
Before proving Theorems \ref{characteristic thm} and \ref{complex theorem}, we need the following lemma which shows that the characteristic function of the random variable $\log\zeta(\sigma,X)$ is well approximated by that of $R_Y(\sigma,X)$ in a certain range that depends on $Y$.

\begin{lemma}\label{TruncateRandom}
Let $Y$ be a large positive real number, and $s$ be a complex number such that $|s|\leq Y^{\sigma-1/2}$.  Then we have 
\begin{equation}\label{trunc1} \ex\left(|\zeta(\sigma, X)|^s\right) = \ex\bigg(\exp\bigg(s\tmop{Re} \big(R_Y(\sigma,X)\big)\bigg)\bigg)+ O\left(\ex\left(|\zeta(\sigma, X)|^{\tmop{Re}(s)}\right)\frac{|s|}{Y^{\sigma-1/2}}\right).
\end{equation}
Moreover, if
$u,v$ are real numbers such that $|u|+|v|\leq Y^{\sigma-1/2}$, then 
\begin{equation}\label{trunc2}
\Phi_{\sigma}^{\textup{rand}}(u,v)
= \ex\bigg(\exp\bigg(iu \tmop{Re} R_Y(\sigma,X)+ iv \tmop{Im}R_Y(\sigma,X)\bigg)\bigg) +O\left(\frac{|u|+|v|}{Y^{\sigma-1/2}}\right).
\end{equation}

\end{lemma}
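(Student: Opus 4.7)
The proof exploits the multiplicative structure of the random Euler product. Write
$$
\log\zeta(\sigma,X) = R_Y(\sigma,X) + R_Y^{c}(\sigma,X), \qquad R_Y^{c}(\sigma,X) := \sum_{p^n>Y}\frac{X(p)^n}{n\,p^{n\sigma}}.
$$
For $\sigma>\tfrac12$ the tail $R_Y^{c}(\sigma,X)$ converges almost surely (and in $L^2$), has mean zero because $\ex(X(p)^n)=0$ for every $n\geq 1$, and satisfies $\ex(|R_Y^{c}(\sigma,X)|^2) = \sum_{p^n>Y} 1/(n^2 p^{2n\sigma}) \ll Y^{1-2\sigma}$. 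Crucially, $R_Y$ and $R_Y^{c}$ depend on disjoint subfamilies of the independent variables $\{X(p)\}$, so they are themselves independent; consequently both sides of \eqref{trunc1} and \eqref{trunc2} factor into a product of a ``truncated-part'' expectation and a ``tail'' expectation.

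For \eqref{trunc1}, write $\ex(|\zeta(\sigma,X)|^s) = \ex(\exp(s\tmop{Re} R_Y))\cdot g(s)$ with $g(s) := \ex(\exp(s\tmop{Re} R_Y^{c}))$. Since $\ex(\tmop{Re} R_Y^{c}) = 0$, the inequality $|e^z - 1 - z| \leq \tfrac{1}{2}|z|^2 e^{|z|}$ combined with Cauchy-Schwarz gives
$$
|g(s) - 1| \ll |s|^2 \sqrt{\ex(|R_Y^{c}|^4)}\cdot\sqrt{\ex(\exp(2|s|\,|R_Y^{c}|))}.
$$
The fourth moment of $R_Y^{c}$ is $\ll Y^{2-4\sigma}$ by the diagonal-pairing argument of Lemma \ref{diagonal}, and the exponential moment is $O(1)$ for $|s|\leq Y^{\sigma-1/2}$ by Khintchine-type moment bounds analogous to Lemma \ref{large sieve} applied to the tail. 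Together these yield $|g(s) - 1| \ll |s|^2 Y^{1-2\sigma} \leq |s|/Y^{\sigma-1/2}$ in the stated range. Finally, to replace the prefactor $|\ex(\exp(s\tmop{Re} R_Y))|$ by $\ex(|\zeta(\sigma,X)|^{\tmop{Re}(s)})$, I apply Jensen's inequality to the tail: $\ex(\exp(\tmop{Re}(s)\tmop{Re} R_Y^{c})) \geq \exp(\tmop{Re}(s)\cdot 0) = 1$, so by independence
$$
|\ex(\exp(s\tmop{Re} R_Y))| \leq \ex(\exp(\tmop{Re}(s)\tmop{Re} R_Y)) \leq \ex(|\zeta(\sigma,X)|^{\tmop{Re}(s)}),
$$
and \eqref{trunc1} follows.

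For \eqref{trunc2} the argument is simpler. Independence factors $\Phi_{\sigma}^{\textup{rand}}(u,v) = \ex(\exp(iu\tmop{Re} R_Y + iv\tmop{Im} R_Y))\cdot h(u,v)$ with $h(u,v) := \ex(\exp(iu\tmop{Re} R_Y^{c} + iv\tmop{Im} R_Y^{c}))$. The first factor has modulus at most $1$, so it suffices to show $|h(u,v) - 1| \ll (|u|+|v|)/Y^{\sigma-1/2}$. This is immediate from $|e^{ix}-1|\leq |x|$, the triangle inequality, and Cauchy-Schwarz:
$$
|h(u,v)-1| \leq (|u|+|v|)\,\ex(|R_Y^{c}|) \leq (|u|+|v|)\sqrt{\ex(|R_Y^{c}|^2)} \ll (|u|+|v|)/Y^{\sigma-1/2}.
$$
The principal technical point is the uniform bound $\ex(\exp(2|s|\,|R_Y^{c}|)) = O(1)$ needed in part (i); this does not follow from the variance estimate alone, but rather by expanding the exponential in a power series and invoking the same Khintchine-type moment bounds that underlie Lemma \ref{large sieve}, applied to the tail $R_Y^{c}$ in place of a short Dirichlet polynomial. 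Everything else is a direct consequence of independence and mean-zero moment estimates.
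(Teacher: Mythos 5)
Your decomposition has the right shape, but the central assertion that $R_Y(\sigma,X)$ and $R_Y^c(\sigma,X) := \sum_{p^n>Y} X(p)^n/(n p^{n\sigma})$ ``depend on disjoint subfamilies of the independent variables $\{X(p)\}$, so they are themselves independent'' is false. The index set for $R_Y^c$ is $\{(p,n):p^n>Y\}$; for any prime $p \leq Y$ this set still contains every $n > \log Y/\log p$, so $R_Y^c$ depends on $X(p)$ for \emph{every} prime $p$, not just $p > Y$. (Concretely, with $Y = 8$ both $R_Y$ and $R_Y^c$ are non-constant functions of $X(2)$.) Since your proof of \eqref{trunc1} and \eqref{trunc2} is organized entirely around the factorizations $\ex(|\zeta(\sigma,X)|^s) = \ex(\exp(s\tmop{Re} R_Y))\,g(s)$, $\Phi_\sigma^{\textup{rand}}(u,v) = (\cdots)\,h(u,v)$, and the final chain of inequalities invoking ``by independence,'' this is a load-bearing step and the argument as written does not go through.

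The gap is repairable, and the repair is essentially what the paper does: split the tail as $R_Y^c = R_Y^{c,(1)} + R_Y^{c,(\geq 2)}$ with $R_Y^{c,(1)} = \sum_{p>Y} X(p)/p^\sigma$ and $R_Y^{c,(\geq 2)} = \sum_{n\geq 2,\ p^n>Y} X(p)^n/(n p^{n\sigma})$. Only the first piece is independent of $R_Y$; the second piece is not, but it is bounded deterministically, $|R_Y^{c,(\geq 2)}| \leq \sum_{n\geq 2,\ p^n>Y} 1/(n p^{n\sigma}) \ll Y^{1/2-\sigma}$, so $\exp(z\tmop{Re} R_Y^{c,(\geq 2)}) = 1 + O(|z|Y^{1/2-\sigma})$ uniformly and it can be absorbed into the error before any factoring is attempted. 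After that your Cauchy--Schwarz/fourth-moment/exponential-moment machinery for the genuine random tail $R_Y^{c,(1)}$ works, and is in fact more elaborate than necessary: the paper simply multiplies over $p > Y$ the one-factor bound $\ex(\exp(z\tmop{Re}(X(p)/p^\sigma))) = 1 + O(|z|^2/p^{2\sigma})$ (valid since $|z| < p^\sigma$ for $p > Y$), giving $1 + O(|z|^2 Y^{1-2\sigma})$ directly, without needing exponential moments or $L^4$ control. Your Jensen step at the end likewise requires the same split: factor out the independent $R_Y^{c,(1)}$, apply Jensen to it using $\ex(\tmop{Re} R_Y^{c,(1)}) = 0$, and absorb the bounded $R_Y^{c,(\geq 2)}$ into an implied constant.
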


\begin{proof} 
Let $z$ be a complex number with $|z|\leq Y^{\sigma-1/2}.$ Using that 
$$
\sum_{\substack{n\geq 2\\ p^n>Y}}\frac{1}{p^{\sigma n}}\ll \frac{1}{Y^{\sigma-1/2}},
$$
 we obtain
$$ \ex\bigg(\exp\Big(z\tmop{Re} \log\zeta(\sigma, X)\Big)\bigg) = \ex\left(\exp\left(z\tmop{Re} \big(R_Y(\sigma,X)\big)+z \tmop{Re}\sum_{p>Y}\frac{X(p)}{p^{\sigma}}+ O\left(\frac{|z|}{Y^{\sigma-1/2}}\right)\right)\right).$$
Furthermore, if $p>Y$ then $|z|<p^{\sigma}$ and hence
$$ \ex\left(\exp\left(z\tmop{Re} \frac{X(p)}{p^{\sigma}}\right)\right)= 
\ex\left(1+z\tmop{Re} \frac{X(p)}{p^{\sigma}} +O\left(\frac{|z|^2}{p^{2\sigma}}\right)\right)
= 1+O\left(\frac{|z|^2}{p^{2\sigma}}\right).$$
The independence of the $X(p)$'s together with the fact that $\sum_{p>Y}p^{-2\sigma}\ll Y^{1-2\sigma}$ imply that
\begin{equation}\label{trunc3}
\ex\bigg(\exp\Big(z\tmop{Re} \log\zeta(\sigma, X)\Big)\bigg)= \ex\left(\exp\left(z\tmop{Re} \big(R_Y(\sigma,X)\big)+ O\left(\frac{|z|}{Y^{\sigma-1/2}}\right)\right)\right),
\end{equation}
from which \eqref{trunc1} follows. To obtain \eqref{trunc2} one also uses that
$$\ex\bigg(\exp\Big(z \tmop{Im} \log\zeta(\sigma, X)\Big)\bigg)= \ex\left(\exp\left(z \tmop{Im} \big(R_Y(\sigma,X)\big)+ O\left(\frac{|z|}{Y^{\sigma-1/2}}\right)\right)\right),
$$
which can be obtained along similar lines.

\end{proof}

\begin{proof}[Proof of Theorem \ref{characteristic thm}]
Let $Y=(\log T)^{B/(\sigma-1/2)}$ where $B=B(A)$ is a suitably large constant that will be chosen later. Then it follows from Lemma \ref{gs formula} that 
\begin{equation}\label{approximation poly}
\log\zeta(\sigma+it)=R_Y(\sigma+it)
+O\left(\frac{1}{(\log T)^{B/2-3}}\right),
\end{equation}
for all $t\in [T, 2T]$ except a set of measure $T^{1-d(\sigma)}$ for some  constant $d(\sigma)>0$.
Let $\mathcal B(T)$ be the set of $t \in [T, 2T]$
such that \eqref{approximation poly} holds.
Note that $|e^{ib}-e^{ia}|=|\int_a^b e^{ix} dx|\leq|b-a|$.
Therefore, we obtain
\begin{align*}
\Phi_{\sigma,T}(u,v)&=\frac1T \int_{\mathcal B(T)} \exp\Big( i u \tmop{Re} \log \zeta(\sigma+it)
+iv \tmop{Im} \log \zeta (\sigma+it) \Big) dt  +O\left(T^{-d(\sigma)}\right)\\
&=\frac1T \int_{\mathcal B(T)}
\exp\bigg(iu \tmop{Re} R_Y(\sigma+it)
+iv \tmop{Im} R_Y(\sigma+it) \bigg)dt
+O\left(\frac{1}{(\log T)^{B/2-4}}\right)\\
&= \frac1T \int_T^{2T}
\exp\bigg(iu \tmop{Re} R_Y(\sigma+it)
+iv \tmop{Im} R_Y(\sigma+it) \bigg)dt
+O\left(\frac{1}{(\log T)^{B/2-4}}\right).
\end{align*}
Let $\mathcal{A}(T)$ be as in Proposition \ref{complex}. Then, by Lemma \ref{poly large deviation} and Proposition \ref{complex}, taking $z_1=\tfrac{i}{2}(u-iv)$ and $z_2=\tfrac{i}{2}(u+iv)$ there, we get
\begin{align*}
&\frac1T \int_T^{2T}
\exp\bigg(iu \tmop{Re} R_Y(\sigma+it)
+iv \tmop{Im} R_Y(\sigma+it) \bigg)dt\\
&= \frac1T \int_{\mathcal{A}(T)}
\exp\bigg(iu \tmop{Re} R_Y(\sigma+it)
+iv \tmop{Im} R_Y(\sigma+it) \bigg)dt
+O\left(\frac{1}{(\log T)^B}\right)\\
&= \ex\bigg(\exp\bigg(iu \tmop{Re} R_Y(\sigma,X)+ iv \tmop{Im}R_Y(\sigma,X)\bigg)\bigg) +O\left(\frac{1}{(\log T)^B}\right).\\
\end{align*}
Finally, using \eqref{trunc2} we deduce
$$\ex\bigg(\exp\bigg(iu \tmop{Re} R_Y(\sigma,X) +iv \tmop{Im} R_Y(\sigma,X)\bigg)\bigg)= \Phi_{\sigma}^{\textup{rand}}(u,v)
  +O\left(\frac{1}{(\log T)^{B-1}}\right).
$$
Choosing $B= 2A+8$, and collecting the above estimates completes the proof.
\end{proof}

\begin{proof}[Proof of Theorem \ref{complex theorem}]
As in the proof of Theorem \ref{characteristic thm}, let $Y=(\log T)^{B/(\sigma-1/2)}$ where $B=2A+8$, and 
$\mathcal B(T)$ be the set of $t \in [T, 2T]$
such that \eqref{approximation poly} holds. Then, by Lemma \ref{gs formula} $\text{meas} \big([T,2T]\setminus 
\mathcal B(T)\big)\ll T^{1-d(\sigma)}$ for some  constant $d(\sigma)>0$.
Moreover, let $\mathcal A(T)$ be as in Proposition \ref{complex}. We define
$$\mathcal E(T):= [T, 2T]\setminus\bigg(\mathcal A(T) \cap \mathcal B(T)\bigg).$$
Then, it follows from Lemma \ref{poly large deviation} that 
\begin{equation}\label{bound measure}
\text{meas} \big(\mathcal E(T)\big)\ll T\exp\left(-b_0\frac{\log T}{\log\log T}\right),
\end{equation}
for some positive constant $b_0=b_0(\sigma, A)$.

Now, by \eqref{approximation poly} we get
\begin{equation}\label{trunc moments}
\begin{aligned}
\frac{1}{T} \int_{[T,2T]\setminus\mathcal{E}(T)} |\zeta(\sigma+it)|^zdt  &= \frac{1}{T} \int_{\mathcal A(T) \cap \mathcal B(T)} \exp\left(z \tmop{Re}\big(R_Y(\sigma+it)\big) + O\left(\frac{1}{(\log T)^A}\right)\right)dt\\
&= \frac1T\int_{\mathcal A(T) \cap \mathcal B(T)} \exp\Big(z \tmop{Re} \big(R_Y(\sigma+it)\big) \Big) dt +E_4,
\end{aligned}
\end{equation}
where 
\begin{equation}\label{error moments1}
\begin{aligned}
E_4&\ll \frac{1}{T(\log T)^A}\int_{\mathcal A(T)}\exp\bigg(\tmop{Re}(z) \tmop{Re} \big(R_Y(\sigma+it)\big)\bigg) dt\\
&\ll \frac{1}{(\log T)^A}\ex\bigg(\exp\bigg(\tmop{Re}(z) \tmop{Re} \big(R_Y(\sigma,X)\big)\bigg)\bigg)\\
&\ll \frac{1}{(\log T)^A}\ex\Big(|\zeta(\sigma,X)|^{\tmop{Re}(z)}\Big),
\end{aligned}
\end{equation}
by Proposition \ref{complex} and Lemma \ref{TruncateRandom}. 

On the other hand, since $\text{meas} \big([T,2T]\setminus 
\mathcal B(T)\big)\ll T^{1-d(\sigma)}$, and $|R_Y(\sigma+it)|\leq (\log T)^{1-\sigma}/\log\log T$ for all $t\in \mathcal A(T)$ we deduce that
\begin{equation}\label{error moments2}
\begin{aligned}
\frac1T\left(\int_{\mathcal A(T)}- \int_{\mathcal A(T) \cap \mathcal B(T)}\right)\exp\Big(z \tmop{Re} \big(R_Y(\sigma+it)\big) \Big) dt
&\ll T^{-d(\sigma)} \exp\left(\tmop{Re}(z) \frac{(\log T)^{1-\sigma}}{\log \log T}\right)\\
& \ll T^{-d(\sigma)/2}.
\end{aligned}
\end{equation}
Furthermore, combining Proposition \ref{complex} and Lemma \ref{TruncateRandom} we obtain
\begin{equation}\label{main moments}
\begin{aligned}
\frac1T\int_{\mathcal A(T) } \exp\Big(z \tmop{Re} \big(R_Y(\sigma+it)\big) \Big) dt
&= \ex\bigg(\exp\Big(z \tmop{Re} \big(R_Y(\sigma,X)\big)\Big)\bigg)+O\left(\frac{1}{(\log T)^A}\right)\\
&= \ex\Big(|\zeta(\sigma, X)|^z\Big)+O\left(\frac{1}{(\log T)^A}\ex\Big(|\zeta(\sigma,X)|^{\tmop{Re}(z)}\Big)\right).
\end{aligned}
\end{equation}
The result follows upon inserting the estimates \eqref{error moments1}, \eqref{error moments2} and \eqref{main moments} in \eqref{trunc moments}.
\end{proof}

\section{$L^{2k}$ norm of $\log \zeta(\sigma + it) - a$: Proof of Proposition \ref{apoint prop}}

As a special case of Lemma 2.2.1 of Guo \cite{Guo},
which itself is a generalization of a lemma
of Landau (see \cite{Landau} or
Lemma $\alpha$ from Chapter III of \cite{Titchmarsh}), we 
have
\begin{lemma} \label{landau lem} 
Let $0< r \ll 1$. Also, let $s_0
=\sigma_0+it$ and suppose $f(z)$ is 
analytic in $|z-s_0| \leq r$. Define
\[
M_{r}(s_0)=\max_{|z-s_0| \leq r} 
\bigg|\frac{f(z)}{f(s_0)} \bigg| +3 \qquad
\mbox{ and }  \qquad N_{ r}(s_0)=\sum_{|\varrho-s_0|
\leq r} 1,
\]
where the last sum runs over the zeros, $\varrho$, of $f(z)$
in the closed disk of radius $r$ centered at $s_0$.
Then for $0<\delta<r/2$ and $|z-s_0| \leq r-2\delta$
we have
\[
\frac{f'}{f}(z)=\sum_{|\rho-s_0| \leq r-\delta}
\frac{1}{s-\rho} +O\bigg( \frac{1}{\delta^2}
\Big(\log M_{r}(s_0) +
N_{r-\delta}(s_0) (\log 1/\delta+1) \Big)\bigg).
\]
\end{lemma}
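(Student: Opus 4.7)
The plan is to reduce to a Borel--Carath\'eodory-type bound after removing the zeros of $f$ inside the disk $|z-s_0|\le r-\delta$ by means of Blaschke factors. First, set $R=r-\delta$ and, for each zero $\rho$ of $f$ with $|\rho-s_0|\le R$, introduce the Blaschke factor
$$B_\rho(z)=\frac{R(z-\rho)}{R^2-(z-s_0)\overline{(\rho-s_0)}},$$
which is holomorphic on $|z-s_0|\le R$, has a simple zero at $\rho$, satisfies $|B_\rho|\le 1$ on the disk, and $|B_\rho|=1$ on the boundary. Define $g(z)=f(z)/\prod_{|\rho-s_0|\le R} B_\rho(z)$; then $g$ is holomorphic and non-vanishing on $|z-s_0|\le R$.

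Next I would estimate $g$ on this disk: on $|z-s_0|=R$, one has $|g(z)|=|f(z)|\le M_r(s_0)|f(s_0)|$ by the hypothesis, and by the maximum modulus principle the same bound holds on the closed disk of radius $R$. Moreover, $|g(s_0)|=|f(s_0)|\prod_\rho R/|\rho-s_0|\ge |f(s_0)|$, so $|g(z)/g(s_0)|\le M_r(s_0)$ throughout. Taking a holomorphic branch $h(z)=\log(g(z)/g(s_0))$ with $h(s_0)=0$, we have $\operatorname{Re} h(z)\le \log M_r(s_0)$ on $|z-s_0|\le R$. Borel--Carath\'eodory then yields $|h(w)|\ll \delta^{-1}\log M_r(s_0)$ on $|w-s_0|\le R-\delta/2$, and Cauchy's estimate applied on a circle of radius $\delta/2$ around any $z$ with $|z-s_0|\le r-2\delta$ gives
$$\left|\frac{g'}{g}(z)\right|=|h'(z)|\ll \delta^{-2}\log M_r(s_0).$$

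Finally, logarithmic differentiation gives
$$\frac{f'}{f}(z)=\frac{g'}{g}(z)+\sum_{|\rho-s_0|\le R}\frac{B_\rho'(z)}{B_\rho(z)}=\frac{g'}{g}(z)+\sum_{|\rho-s_0|\le R}\left(\frac{1}{z-\rho}+\frac{\overline{(\rho-s_0)}}{R^2-(z-s_0)\overline{(\rho-s_0)}}\right).$$
For $|z-s_0|\le r-2\delta$, the denominator in the correction term is bounded below by $R^2-(R-\delta)R=R\delta\gg \delta$, so each correction contributes $O(1/\delta)$, and the full sum contributes $O(N_{r-\delta}(s_0)/\delta)$. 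Combining with the bound on $g'/g$ and using $\delta^{-1}\le \delta^{-2}(\log(1/\delta)+1)$ for $\delta<1/2$ yields the claimed estimate.

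The only delicate step is verifying the two normalizations in parallel, namely that $|g(z)/g(s_0)|\le M_r(s_0)$ (so that Borel--Carath\'eodory produces the clean $\log M_r(s_0)$ and not some larger quantity involving the zero-counting function), and that the Blaschke correction terms are genuinely uniformly small on the smaller disk $|z-s_0|\le r-2\delta$. Both follow from the explicit form of $B_\rho$ together with $|\rho-s_0|\le R\ll 1$; no further input beyond Jensen-type geometry is needed, and the required zero-count bound $N_{r-\delta}(s_0)\ll \delta^{-1}\log M_r(s_0)$ itself drops out of Jensen's formula if desired.
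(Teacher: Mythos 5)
Your argument is correct in substance, but note that the paper does not prove this lemma at all: it is quoted as a special case of Lemma 2.2.1 of Guo, which in turn generalizes Landau's lemma (Lemma $\alpha$ of Chapter III of Titchmarsh). So what you have written is a self-contained proof of the cited statement, and it follows the classical route for such results: remove the zeros in the disc of radius $R=r-\delta$ by Blaschke factors, bound the zero-free quotient $g$ on that disc via the maximum principle together with $|g(s_0)|\ge |f(s_0)|$ (so that $|g(z)/g(s_0)|\le M_r(s_0)$), apply Borel--Carath\'eodory to $\log\big(g(z)/g(s_0)\big)$ to get $|h|\ll \delta^{-1}\log M_r(s_0)$ on the circle of radius $R-\delta/2$, and recover $g'/g$ by Cauchy's estimate on circles of radius $\delta/2$; the logarithmic derivatives of the Blaschke factors then produce the sum over zeros plus a correction of size $O\big(N_{r-\delta}(s_0)/\delta\big)$. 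This yields the stated bound, in fact in the slightly stronger form without the $\log(1/\delta)$ factor attached to $N_{r-\delta}(s_0)$.

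Two small points would deserve a sentence in a written version, though neither is a real gap. First, if a zero lies exactly on the circle $|\rho-s_0|=R$, your factor $B_\rho$ degenerates into a unimodular constant (its pole coincides with its zero), so it does not remove that zero and $g$ fails to be zero-free on the closed disc, which is needed for the branch of the logarithm. Since $f\not\equiv 0$ has finitely many zeros in $|z-s_0|\le r$, this is repaired by replacing $R$ with a nearby radius $R'\in(r-\delta,\,r-\delta/2)$ avoiding all zero moduli; the finitely many extra zeros with $r-\delta<|\rho-s_0|\le R'$ each contribute $O(1/\delta)$ to the sum and their number is $O(\delta^{-1}\log M_r(s_0))$ by Jensen's formula, so they are absorbed into the error term. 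Second, your assertion that the denominator satisfies $R\delta\gg\delta$ tacitly assumes $R\asymp 1$ (true in the paper's application, where $r$ is bounded below, but not guaranteed by the hypothesis $0<r\ll 1$); the clean statement is that the correction term is at most $|\rho-s_0|/(R\delta)\le 1/\delta$, which holds for every admissible $r$ and gives the same conclusion.
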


In the following we take
\begin{equation} \label{def f}
f(z)=f_a(z)=
\begin{cases}
(\zeta(z)-a)/(1-a) \mbox{ if } a\neq1, \\
2^z(\zeta(z)-1) \mbox{ if } a=1.
\end{cases}
\end{equation}
We also choose
\begin{equation} \label{def parameters}
\delta=(\sigma-1/2)/5 \qquad \mbox{and} \qquad
r=\sigma_0-(\sigma+1/2)/2,
\end{equation}
where $\sigma_0$ is taken to be 
large enough (depending on $a$) 
so that $|f(\sigma_0+it)| \geq 1/10$
and $\min_{\rho_a}|s_0-\rho_a| \geq 1/10$ uniformly in $t$.

For
$|z-s_0| \leq r-2\delta$
Lemma \ref{landau lem} yields
\begin{equation} \label{log deriv form}
\frac{\zeta'(z)}{\zeta(z)-a}=\sum_{|\rho_a-s_0| \leq r-\delta}
\frac{1}{z-\rho_a}+O\bigg(\frac{1}{\delta^2}(
\log M_{r}(s_0)+N_{r-\delta}(s_0))
(\log 1/\delta+1) \bigg).
\end{equation}

\begin{lemma} \label{first formula}
Let $1/2< \sigma \leq 2$ be fixed. Also, let 
$\delta$, $r$, and $\sigma_0$ be as in 
\eqref{def parameters}.
For $t$ sufficiently large we have
\[
\log |\zeta(\sigma+it)-a|=\sum_{|\rho_a-s_0| \leq r-\delta} 
\log |\sigma+it-\rho_a|+O(\log M_{r}(s_0)).
\]
\end{lemma}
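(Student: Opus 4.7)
The plan is to derive Lemma \ref{first formula} by integrating the log-derivative expansion \eqref{log deriv form} horizontally from $s_0 = \sigma_0 + it$ to $\sigma + it$ and taking real parts. Since $f'/f$ differs from $\zeta'/(\zeta-a)$ by a constant (namely $\log 2$ when $a=1$, and $0$ otherwise), \eqref{log deriv form} immediately rewrites as
$$
\frac{f'(z)}{f(z)} = \sum_{|\rho_a - s_0| \leq r - \delta} \frac{1}{z - \rho_a} + E(z), \qquad |z-s_0| \leq r-2\delta,
$$
where $|E(z)| \ll \log M_r(s_0) + N_{r-\delta}(s_0)$. A quick application of Jensen's formula to $f$ on the disk of radius $r$ about $s_0$ yields $N_{r-\delta}(s_0) \ll \log M_r(s_0)$, because $\log(r/(r-\delta))$ is bounded below by a positive constant once $\sigma > 1/2$ and $\delta = (\sigma-1/2)/5$ are fixed.

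I would then introduce the reduced function $F(z) = f(z) \prod_{|\rho_a - s_0| \leq r - \delta} (z - \rho_a)^{-1}$, which is holomorphic and nonvanishing on $|z-s_0| \leq r-\delta$; in particular $\log|F|$ is harmonic there and $F'/F = E$. A short calculation with the parameters in \eqref{def parameters} verifies the inequality $\sigma_0 - \sigma \leq r-2\delta$ (it reduces to $\sigma \geq 1/2$), so the horizontal segment from $s_0$ to $\sigma+it$ lies inside the disk on which $E$ is controlled. Integrating $F'/F = E$ along that segment and taking real parts gives
$$
\log|F(\sigma+it)| - \log|F(s_0)| = \tmop{Re} \int_{s_0}^{\sigma+it} E(z)\, dz \ll \log M_r(s_0).
$$
Forming $F$ is what removes any difficulty caused by zeros of $\zeta - a$ lying on or near the integration path, since $F$ is zero-free in the full disk.

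Unwinding the definition of $F$ yields
$$
\log|f(\sigma+it)| = \sum_{|\rho_a - s_0| \leq r-\delta} \log|\sigma + it - \rho_a| + \log|f(s_0)| - \sum_{|\rho_a - s_0| \leq r-\delta} \log|s_0 - \rho_a| + O(\log M_r(s_0)).
$$
The hypotheses on $\sigma_0$ guarantee $|f(s_0)| \asymp 1$ (using the trivial upper bound $|\zeta(\sigma_0+it)| = O(1)$ once $\sigma_0 > 1$), so $\log|f(s_0)| = O(1)$; likewise each $\log|s_0-\rho_a|$ is $O(1)$ since $1/10 \leq |s_0-\rho_a| \leq r-\delta$, and the number of such terms is $N_{r-\delta}(s_0) \ll \log M_r(s_0)$. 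The lemma follows after noting that $\log|f(\sigma+it)| = \log|\zeta(\sigma+it)-a| + O(1)$ by the definition \eqref{def f}. The only real technical point is the radius bookkeeping---checking the path lies inside the disk of validity of \eqref{log deriv form} and that Jensen's estimate produces clean constants for fixed $\sigma$---which is a short algebraic verification.
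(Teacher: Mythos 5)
Your proof is correct and follows essentially the same route as the paper: bound $N_{r-\delta}(s_0)$ via Jensen's formula in a disk of radius $r$ about $s_0$, integrate the expansion \eqref{log deriv form} along the horizontal segment from $s_0$ to $\sigma+it$, take real parts, and dispose of the boundary terms using the normalization $|f(s_0)|\asymp 1$, $|s_0-\rho_a|\asymp 1$. Your introduction of the zero-free function $F(z)=f(z)\prod(z-\rho_a)^{-1}$ is a tidy way to make the integration step airtight when an $a$-point might sit on the path, but it is a polish on the same argument rather than a different one.
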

\begin{proof}
Let $f(z)$ be as in \eqref{def f}.
First, note that Jensen's formula gives
\[
\int_0^r N_{x}(s_0) \, \frac{dx}{x}=\frac{1}{2\pi}
\int_0^{2\pi} \log |f(re^{i \theta}+s_0)| \, d \theta
-\log |f(s_0)|.
\]
Observe that
\[
\int_0^r N_{x}(s_0) \, \frac{dx}{x}
\geq \int_{r-\delta}^r N_{x}(s_0) \, \frac{dx}{x}
\geq \frac{\delta}{r} N_{r-\delta}(s_0).
\]
By this and the bound $\log |f(s_0)| \geq
\log 1/10$ we have
\begin{equation} \label{N bd}
N_{r-\delta}(s_0) 
\leq \frac{r}{\delta} \Big(\frac{1}{2\pi} \int_0^{2\pi}
 \log |f(re^{i \theta}+s_0)| \, d\theta
-\log |f(s_0)|\Big)
\leq \frac{r}{\delta} (\log M_{r}(s_0)+
\log 10).
\end{equation}
Applying this estimate in \eqref{log deriv form}
and noting that $\delta \gg 1$ and $r \ll 1$ we have
\[
\frac{\zeta'(z)}{\zeta(z)-a}=\sum_{|\rho_a-s_0| \leq r-\delta}
\frac{1}{z-\rho_a}+O(\log M_{r}(s_0))
\]
for $|z-s_0|\leq r-2\delta$. In particular, this formula is 
valid along the line segment that connects $s$ to $s_0$. 
Hence, integrating the above equation from $s$ to $s_0$
and taking real parts gives
\[
\log |\zeta(s)-a|-\log |\zeta(s_0)-a|
=\sum_{|\rho_a-s_0|\leq r-\delta} (\log |s-\rho_a|
-\log |s_0-\rho_a|)+O(\log M_{r}(s_0)).
\]
By the choice of $\sigma_0$ we have
\[
\log |\zeta(s_0)-a|=O(1) \qquad \mbox{and} \qquad
 \log |s_0-\rho_a|=O(1).
\]
Thus,
\[
\log |\zeta(s)-a|=\sum_{|\rho_a-s_0|\leq r-\delta} 
\log |s-\rho_a|+O(N_{r-\delta}(s_0)+\log M_{r}(s_0)).
\]
Applying \eqref{N bd} to the error term completes
the proof.
\end{proof}

\begin{lemma} \label{sum bound}
Let $\tfrac12 < \sigma \leq 2$ be fixed.
Also, let $r$, $\delta$, and
$\sigma_0$ be as in \eqref{def parameters}.
 Then there exists
an absolute constant $C>0$
such that for any real number $k\geq \tfrac12$
we have
\[
\frac1T \int_T^{2T} \bigg(\sum_{|\rho_a-s_0| \leq r-\delta} 
|\log |\sigma+it-\rho_a|| \bigg)^{2k} dt \ll
\Gamma(2k+1) (C\log M_{r+\delta}(s_0))^{2k}.
\]
\end{lemma}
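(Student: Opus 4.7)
The plan is to apply the power-mean inequality to replace the $2k$-th power of the sum by a sum of $2k$-th powers, and then to exploit that each of the resulting per-zero integrands has only an integrable logarithmic singularity, which produces the $\Gamma(2k+1)$ factor.

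First, Jensen's formula applied to $f_a$ on the disk of radius $r+\delta$ (the same argument giving \eqref{N bd}, with $r$ replaced by $r+\delta$) yields, uniformly in $t$,
\[
N_{r-\delta}(s_0)\;\ll\;\log M_{r+\delta}(s_0).
\]
Since $\log M_{r+\delta}(s_0)\geq\log 3>0$, the power-mean inequality $(\sum_{j=1}^{N}x_j)^{2k}\leq N^{2k-1}\sum_j x_j^{2k}$ applied with $N=N_{r-\delta}(s_0)$, together with the preceding bound, gives the pointwise inequality
\[
\biggl(\sum_{|\rho_a-s_0|\leq r-\delta}|\log|\sigma+it-\rho_a||\biggr)^{2k}\;\ll\;(C_0\log M_{r+\delta}(s_0))^{2k}\sum_{|\rho_a-s_0|\leq r-\delta}|\log|\sigma+it-\rho_a||^{2k}.
\]
Viewing $\log M_{r+\delta}(s_0)$ as a uniform bound over $t\in[T,2T]$, it suffices to bound the $T$-average of the remaining sum by $\Gamma(2k+1)C_1^{2k}$ for an absolute $C_1$.

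For this, swap the sum and the integral. For each zero $\rho_a=\beta+i\gamma$ with $\gamma\in[T-r,2T+r]$ and $|\beta-\sigma_0|\leq r-\delta$, the substitution $u=t-\gamma$ with $y=\sigma-\beta$ reduces the inner integral to
\[
\int_{|u|\leq r-\delta}\bigl|\tfrac{1}{2}\log(y^{2}+u^{2})\bigr|^{2k}du.
\]
On the subregion $y^{2}+u^{2}\geq 1$ the integrand is bounded by a constant (since $|y|$ and $|u|$ are bounded by fixed quantities depending only on $\sigma$ and $\sigma_0$), contributing at most $O(1)^{2k}$. On the complementary subregion $|\log(y^{2}+u^{2})|\leq-\log u^{2}=2|\log|u||$, and the substitution $v=-\log|u|$ reduces this piece to $\int_{0}^{\infty}v^{2k}e^{-v}dv=\Gamma(2k+1)$ up to absolute constants. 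Hence each per-zero contribution is $\ll\Gamma(2k+1)C_{1}^{2k}$, uniformly in $\beta$.

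Finally, the number of zeros $\rho_a$ with $|\beta-\sigma_0|\leq r-\delta$ and $\gamma\in[T-r,2T+r]$ is $O(T)$: in the portion of the strip contained in $\tfrac12<\beta<1$ this is a direct consequence of the Borchsenius--Jessen theorem \eqref{BorchseniusJessen}, while for the portion crossing $\mathrm{Re}(s)=1$ (when $\sigma_0+r>1$) a crude Jensen's formula bound on a disk of radius $\asymp T$ centered at a point of imaginary part $\asymp 3T/2$ gives an $O(T\log T)$ count, the superfluous $\log T$ being harmlessly absorbable into $(\log M_{r+\delta}(s_0))^{2k}$. Summing and dividing by $T$ then gives the desired estimate. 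The main subtlety is this last zero-counting step, which must hold uniformly across the full range $\tfrac12<\sigma\leq 2$; everything else is a standard Gamma-integral computation combined with a Jensen-type bound on the local zero density.
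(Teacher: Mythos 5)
There is a genuine gap at the step where you ``view $\log M_{r+\delta}(s_0)$ as a uniform bound over $t\in[T,2T]$''. Recall that $s_0=\sigma_0+it$, so $M_{r+\delta}(s_0)$ is a function of $t$, and its supremum over $t\in[T,2T]$ is not comparable to its typical size: since $M_{r+\delta}(s_0)$ controls $\max_{|z-s_0|\le r+\delta}|\zeta(z)|$ on a disk that reaches into the critical strip, one has $\sup_t \log M_{r+\delta}(s_0)\asymp\log T$ in general, whereas Lemma~\ref{M bound} shows that the $t$-average of $(\log M_{r+\delta}(s_0))^{2k}$ is only $\ll(Ck)^{2k}$. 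The subsequent application (Proposition~\ref{apoint prop}, and then Lemma~\ref{initial} with $k=\log\log T$) requires the averaged bound, not the sup; if you replace $(\log M_{r+\delta}(s_0))^{2k}$ by its supremum your bound becomes $\Gamma(2k+1)(C\log T)^{2k}$, which is far too weak and would destroy the error term in Theorem~\ref{aux}. The inequality as stated in the Lemma is slightly loosely written precisely because the quantity $(\log M_{r+\delta}(s_0))^{2k}$ is implicitly understood as a $t$-dependent object that will be integrated against $dt$ afterwards; a proof must keep track of that $t$-dependence rather than uniformize it.

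This is also exactly the obstruction you cannot avoid by swapping the sum over zeros and the $t$-integral as you propose: once the pointwise power-mean inequality produces the product $(\log M_{r+\delta}(s_0))^{2k}\cdot\sum_{\rho_a}|\log|\sigma+it-\rho_a||^{2k}$, the two factors are coupled through $t$ and there is no clean Hölder split that preserves both the $\Gamma(2k+1)$ factor and the averaged $M$ control. The paper resolves this by first restricting $t$ to a unit interval $[n,n+1]$, observing that all zeros in range then lie in a fixed region $\mathcal{D}_n$ whose cardinality is $\ll\log M_{r+\delta}(\sigma_0+in)$, and applying Minkowski's inequality for the $L^{2k}[n,n+1]$ norm. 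That produces $\Gamma(2k+1)\sum_n(\#\{\rho_a\in\mathcal{D}_n\})^{2k}$, in which the zero-count (hence $\log M$) appears \emph{summed over $n$}, i.e.\ effectively averaged, so that Lemma~\ref{M bound} applies directly. Your per-zero $\Gamma$-integral computation and the Jensen bound $N_{r-\delta}(s_0)\ll\log M_{r+\delta}(s_0)$ are both fine, but the global interchange does not recover the paper's estimate. A secondary, more minor concern: the zero count for $\rho_a$ with $\operatorname{Re}\rho_a\ge1$ needs a cleaner argument than ``absorbing an extra $\log T$ into $(\log M)^{2k}$'', since $\log M_{r+\delta}(s_0)$ can be of order one for most $t$.
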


\begin{proof}
Define $D_R(z)$ to be the closed
disc of radius $R$ centered at $z$. For
$n= \lfloor T
\rfloor, \ldots, \lfloor 2 T
\rfloor+1$
let 
\[
\mathcal D_n = \bigcup_{\ell=0}^{\lfloor 1/ \sqrt{\delta} \rfloor+1} 
D_{r}(\sigma_0+i(n+\ell \cdot \sqrt{\delta}  )).
\]  
Observe that
\[
D_{r-\delta}(\sigma_0+in) 
\bigcup \Big\{ z: n \leq \tmop{Im} z \leq n+\sqrt{\delta}, 
\sigma_0-(r-\delta) \leq \tmop{Re} z \leq \sigma_0+r-\delta \Big\} \subset D_{r}(\sigma_0+in).
\]
Next, note that
\[
\{ z: n+\sqrt{\delta} \leq \tmop{Im} z \leq n+2\sqrt{\delta}, 
\sigma_0-(r-\delta) \leq \tmop{Re} z \leq \sigma_0+r-\delta \}
\subset D_{r}(\sigma_0+i(n+\sqrt{\delta})),
\]
and so on. Hence, by construction
\[
\bigcup_{n \leq t \leq n+1} D_{r-\delta}(\sigma_0+it) \subset \mathcal D_n.
\]
This implies that
\begin{equation} \notag
\begin{split}
\int_T^{2T} \bigg(\sum_{|\rho_a-s_0| \leq  r-\delta} 
|\log |\sigma+it-\rho_a|| \bigg)^{2k} dt
\leq&
\sum_{n=\lfloor T \rfloor}^{\lfloor 2T \rfloor+1}
\int_{n}^{n+1} \bigg(\sum_{|\rho_a-s_0| \leq r-\delta} 
|\log |\sigma+it-\rho_a|| \bigg)^{2k} dt \\
\leq &
 \sum_{n=\lfloor T \rfloor}^{\lfloor 2T \rfloor+1} 
\int_{n}^{n+1} 
\bigg(\sum_{\rho_a \in \mathcal D_n} 
|\log |\sigma+it-\rho_a|| \bigg)^{2k} dt.
\end{split}
\end{equation}
Applying Minkowski's inequality to the right-hand side
we get that
\begin{equation} \label{minkowski bd}
\begin{split}
&\int_T^{2T} \bigg(\sum_{|\rho_a-s_0| \leq  r-\delta} 
|\log |\sigma+it-\rho_a|| \bigg)^{2k} dt  \\
&\qquad \qquad \qquad \qquad \qquad \qquad 
 \leq \sum_{n=\lfloor T \rfloor}^{\lfloor 2T \rfloor+1}
\bigg(\sum_{\rho_a \in \mathcal D_n} \bigg(\int_{n}^{n+1} 
|\log |\sigma+it-\rho_a| |^{2k} dt\bigg)^{1/{(2k)}} \bigg)^{2k}.
\end{split}
\end{equation}

We now estimate the inner integral on the right-hand side.
We have for $n \leq t \leq n+1$
and $\rho_a \in \mathcal D_n$ that
\[
|t-\gamma_a|\leq |\sigma+it-\rho_a| \leq c
\] 
for some absolute constant $c=c(a) > 1$. So
for $n \leq t \leq n+1$
and $\rho_a \in \mathcal D_n$ we get that
\begin{equation} \label{max bd}
|\log |\sigma+it-\rho_a||^{2k} 
\leq |\log |t-\gamma_a||^{2k}
+|\log c|^{2k}.
\end{equation}
Also, for $\rho_a \in \mathcal D_n$ we have 
$n-r \leq \gamma_a \leq n+r+2$.
Thus,
\begin{equation} \label{simple bd}
\begin{split}
\int_n^{n+1}| \log |t-\gamma_a||^{2k} dt
\leq& \int_{n-r}^{n+r+2} |\log |t-\gamma_a||^{2k} dt\\
\leq& 2 \int_0^{2r+2}|\log x|^{2k} dx\\
=&2 \Gamma(2k+1)+O\Big( (\log (2r+2))^{2k}\Big).
\end{split}
\end{equation}

Next, note that the set $\mathcal D_n$ consists of $\ll 1/\sqrt{\delta}
=O(1)$ 
disks, each of radius $r$.
Arguing as in \eqref{N bd}, 
we see that each one contains
$\ll \delta^{-1} \log M_{r+\delta}(s_0)
\ll \log M_{r+\delta}(s_0)$ zeros. Hence,
by this, \eqref{minkowski bd}, \eqref{max bd},
and \eqref{simple bd} we see that
\begin{equation} \notag
\begin{split}
\int_T^{2T} \bigg(\sum_{|\rho_a-s_0| \leq  r-\delta} 
|\log |\sigma+it-\rho_a|| \bigg)^{2k} dt
\ll& \sum_{n=\lfloor T \rfloor}^{\lfloor 2T \rfloor+1}
\Gamma(2k+1)\bigg(\sum_{\rho_a \in \mathcal D_n} 1\bigg)^{2k}\\
\leq &T \, \Gamma(2k+1) (C \log M_{r+\delta}(s_0))^{2k},
\end{split}
\end{equation}
for some absolute constant $C>0$.
\end{proof}

\begin{lemma} \label{mean square}
Let $\tfrac12< \sigma \leq 2$ be fixed. 
For any fixed $\sigma_0 > 1$ and $R=\sigma_0-\sigma$ we have
\[
\int_T^{2T} (M_{R}(s_0))^2 \, dt \ll T.
\]
\end{lemma}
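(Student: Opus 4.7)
The plan is to bound $M_R(s_0)^2$ pointwise by an integrated quantity on the boundary circle $|z-s_0|=R$ and then to apply the classical mean-value theorems for $\zeta$ and $\zeta'$ on vertical lines strictly to the right of $1/2$. The key feature of the parameter $R=\sigma_0-\sigma$ is that the closed disk $|z-s_0|\le R$ sits in the vertical strip $\sigma\le \tmop{Re}\,z\le 2\sigma_0-\sigma$, a fixed compact subset of $(1/2,\infty)$.

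Since $|f(s_0)|\ge 1/10$ by the choice of $\sigma_0$ and $f$ is analytic on the closed disk $|z-s_0|\le R$ (the pole $z=1$ lies far outside for $t$ large), the maximum modulus principle gives
$$M_R(s_0)^2\ll\max_{|z-s_0|=R}|\zeta(z)|^2+1.$$
Parametrizing $z=s_0+Re^{i\theta}$ and writing $\phi_t(\theta)=\zeta(s_0+Re^{i\theta})$, so that $\phi_t'(\theta)=iRe^{i\theta}\zeta'(s_0+Re^{i\theta})$, I would apply the elementary Sobolev-type bound for $C^1$ periodic functions
$$\max_\theta|\phi_t(\theta)|^2\le\frac{1}{2\pi}\int_0^{2\pi}|\phi_t(\theta)|^2\,d\theta+2\int_0^{2\pi}|\phi_t(\theta)|\,|\phi_t'(\theta)|\,d\theta,$$
which follows from $|\phi_t(\theta_1)|^2-|\phi_t(\theta_0)|^2=\int_{\theta_0}^{\theta_1}\tfrac{d}{d\theta}|\phi_t|^2\,d\theta$ combined with $|\tfrac{d}{d\theta}|\phi_t|^2|\le 2|\phi_t||\phi_t'|$.

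Integrating this bound over $t\in[T,2T]$, swapping orders of integration, and for each fixed $\theta$ substituting $u=t+R\sin\theta$ reduces the problem to bounds of the form $\int_{T'}^{2T'}|\zeta(\alpha+iu)|^2\,du$ and $\int_{T'}^{2T'}|\zeta'(\alpha+iu)|^2\,du$ with $\alpha=\sigma_0+R\cos\theta\in[\sigma,2\sigma_0-\sigma]$ and $T'=T+O(1)$. Both integrals are $O(T)$ by the classical mean-value theorems, uniformly in $\alpha$ over this compact subinterval of $(1/2,\infty)$; the cross term $\int|\phi_t||\phi_t'|$ is handled by Cauchy-Schwarz in $t$ together with the same two bounds. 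Integrating in $\theta$ closes the argument.

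The only mild concern is the uniformity of the mean-value estimates as $\alpha$ varies over the fixed compact interval $[\sigma,2\sigma_0-\sigma]\subset(1/2,\infty)$, which is standard and requires no input from zero-density results. The content of the lemma is essentially geometric: the choice $R=\sigma_0-\sigma$ is just large enough that the boundary of the disk $|z-s_0|=R$ touches the target line $\tmop{Re}\,z=\sigma$ (as needed for the preceding lemmas, in which $N_{r-\delta}(s_0)$ and $\log M_r(s_0)$ control values of $\zeta$ near that line), yet small enough that the entire boundary lies in a region where uniform $L^2$ bounds on $\zeta$ and $\zeta'$ hold.
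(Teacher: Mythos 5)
Your argument is correct, but it takes a genuinely different route from the paper's. The paper does not pass through a boundary-circle Sobolev inequality; instead it discretizes $[T,2T]$ into unit intervals, picks for each interval a point $s_n$ where $|\zeta|$ is maximal on the corresponding union of disks, bounds $|\zeta(s_n)|^2$ by an \emph{area} average of $|\zeta|^2$ over a slightly larger disk (the inequality preceding Theorem 11.9 of Titchmarsh, a 2D subharmonicity/mean-value bound), then partitions the $s_n$ into $O(1)$ residue classes so that the enlarged disks within each class are pairwise disjoint, and sums using only the classical $L^2$ mean-value theorem for $\zeta$ on a vertical strip. Your route replaces the 2D area average and the disjointness bookkeeping by the 1D inequality $\max_\theta|\phi_t(\theta)|^2\le\frac{1}{2\pi}\int_0^{2\pi}|\phi_t|^2\,d\theta+2\int_0^{2\pi}|\phi_t||\phi_t'|\,d\theta$ on the boundary circle $|z-s_0|=R$, followed by Fubini and a substitution in $t$; this is cleaner in that it avoids the residue-class decomposition entirely, but it requires the uniform $L^2$ mean-value bound for $\zeta'$ as well as for $\zeta$ (plus Cauchy--Schwarz for the cross term), whereas the paper's version gets by with $\zeta$ alone. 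Both inputs are standard and uniform on compact subsets of $\tmop{Re}\,s>\tfrac12$, so your proof is complete. One small point worth making explicit: the reduction $M_R(s_0)^2\ll\max_{|z-s_0|= R}|\zeta(z)|^2+1$ uses the lower bound $|f(s_0)|\ge 1/10$ from the choice of $\sigma_0$, the bound $|f(z)|\ll|\zeta(z)|+1$ on the disk (where in the case $a=1$ the factor $|2^z|$ is bounded because $\tmop{Re}\,z$ stays in a compact interval), and the maximum modulus principle applied to $\zeta$, which is legitimate since the pole at $z=1$ lies outside the disk once $t\ge T$ is large.
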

\begin{proof}
First of all,
\[
\int_T^{2T} (M_{R}(s_0))^2 \, dt \leq \sum_{n=\lfloor T \rfloor}^{\lfloor 2T \rfloor+1} 
\int_n^{n+1} (M_{R}(s_0))^2 \, dt.
\]
Next, let $D_R(z)$ be the disk of radius $R$ centered at $z$.
Also, let
$s_n=\sigma_n+it_n$
be a point at which $|\zeta(s)|$ achieves
its maximum value on the 
set $ \cup_{n \leq t \leq n+1} D_{R}(s_0)$.
Thus,
\[
\int_n^{n+1} (M_{R}(s_0)) ^2\, dt \ll |\zeta(s_n)|^2+1.
\]
Hence, we have
\begin{equation} \label{M bd}
\int_T^{2T} (M_{R}(s_0))^2 \, dt \ll \sum_{n=\lfloor T \rfloor}^{\lfloor 2T \rfloor+1}  |\zeta(s_n)|^2+T.
\end{equation}
Let $R'=\sigma_0-(\sigma+1/2)/2$.
To bound $|\zeta(s_n)|^2$ we note that
\begin{equation} \label{int bd}
|\zeta(s_n)|^2 \leq \frac{4}{\pi(\sigma-\frac12)^2} \iint_{D_{R'}(\sigma_0+it_n)} |\zeta(x+iy)|^2 \, dx \, dy.
\end{equation}
(For a proof of this inequality see the lemma preceding Theorem 11.9 of Titchmarsh \cite{Titchmarsh}).

Let $\mathcal S_j=\{s_n : n \equiv j \pmod{(4 \lceil R' \rceil+2)} \}$.
If $s_m, s_n \in \mathcal S_j$ and $m \neq n$
then $|m-n| \geq 4 \lceil R' \rceil +2$; so that $|t_m-t_n|\ge2R'+1$. This implies that
$D_{R'}(\sigma_0+it_n) \cap D_{R'}(\sigma_0+it_m) =\emptyset$. Thus, since the disks are disjoint
we see that by \eqref{int bd} we have
\[
\sum_{s_n \in S_j} |\zeta(s_n)|^2 \ll 
\int\limits_{\frac12\cdot(\sigma+\frac12)}^{2\sigma_0-\frac12\cdot(\sigma+\frac12)} 
\int_{T-R'-R-1}^{2T+R'+R+1} |\zeta(u+it)|^2 \, dt \, du.
\]
Applying, the well-known mean value estimate for $\zeta(s)$ to the inner integral
(see Theorem 7.2(A) of \cite{Titchmarsh}) we have (uniformly in $j$)
\[
\sum_{s_n \in S_j} |\zeta(s_n)|^2 \ll T.
\]
Also,
$
\{s_n\}=\coprod_{j} \mathcal S_j
$. Thus,
\[
\sum_{n=\lfloor T \rfloor}^{\lfloor 2T \rfloor+1} |\zeta(s_n)|^2=
\sum_{j=0}^{4 \lceil R' \rceil+1} \sum_{s_n \in \mathcal S_j} |\zeta(s_n)|^2 \ll T.
\]
Inserting this into \eqref{M bd} completes the proof.
\end{proof}
\begin{lemma} \label{M bound}
Let $\tfrac12 < \sigma \leq 2$ be fixed. Also, let $r$, $\delta$, and $\sigma_0$ be
as in \eqref{def parameters}. 
Then, there exists an absolute constant 
$C>0$ such that for any real number $k\geq 1$
\[
\int_T^{2T} (\log M_{r+\delta} (s_0))^{2k}  dt \ll T (C k)^{2k}.
\]
\end{lemma}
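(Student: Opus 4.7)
\noindent\emph{Proof plan.} The plan is to convert an $L^2$ estimate on $M_{r+\delta}(s_0)$ into the required $2k$-th moment bound on $\log M_{r+\delta}(s_0)$ via Chebyshev's inequality followed by the layer-cake representation. The only nontrivial input is a variant of Lemma~\ref{mean square} valid at the radius $r+\delta$, rather than at the specific radius $R=\sigma_0-\sigma$ that appears in its statement.

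The first step is to observe that, although Lemma~\ref{mean square} is stated for $R=\sigma_0-\sigma$, its proof is purely formal in the parameter $\sigma$ (note that $f$ does not depend on $\sigma$), and therefore it holds with any $\tilde\sigma\in(\tfrac12,2]$ in place of $\sigma$, delivering the $L^2$ bound at the associated radius $\sigma_0-\tilde\sigma$. Taking
\[
\tilde\sigma:=\sigma_0-(r+\delta)=\frac{\sigma+1/2}{2}-\frac{\sigma-1/2}{5}=\frac{3\sigma}{10}+\frac{7}{20},
\]
one checks $\tilde\sigma\in(\tfrac12,1)$ for all $\tfrac12<\sigma\leq 2$ (in fact $\tilde\sigma-\tfrac12=\tfrac{3}{10}(\sigma-\tfrac12)$), so Lemma~\ref{mean square} applied with $\tilde\sigma$ in place of $\sigma$ yields
\[
\int_T^{2T}\big(M_{r+\delta}(s_0)\big)^2\,dt\ll T,
\]
with implicit constant depending on $\sigma$ through $(\tilde\sigma-\tfrac12)^{-2}=\tfrac{100}{9}(\sigma-\tfrac12)^{-2}$.

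With this $L^2$ estimate in hand, Chebyshev's inequality gives $\text{meas}\{t\in[T,2T]:M_{r+\delta}(s_0)\geq e^V\}\ll T e^{-2V}$ for every $V>0$. Since $M_{r+\delta}(s_0)\geq 3$ by definition, $\log M_{r+\delta}(s_0)$ is positive, and combining the layer-cake formula with the trivial bound $\text{meas}\leq T$ at small $V$ I would obtain
\[
\int_T^{2T}(\log M_{r+\delta}(s_0))^{2k}\,dt=2k\int_0^\infty V^{2k-1}\,\text{meas}\{t:M_{r+\delta}(s_0)>e^V\}\,dV\ll T\cdot\frac{\Gamma(2k+1)}{2^{2k}},
\]
after splitting the integral at a fixed $V_0>0$ and using $\int_0^\infty V^{2k-1}e^{-2V}\,dV=\Gamma(2k)/2^{2k}$. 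Stirling's formula gives $\Gamma(2k+1)/2^{2k}\ll(Ck)^{2k}$ for a suitable absolute $C>0$, which is the desired bound.

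The only conceptual point requiring care is that $r+\delta>\sigma_0-\sigma$, so the disk underlying $M_{r+\delta}(s_0)$ extends slightly below the line $\tmop{Re}(z)=\sigma$. However, because $\sigma>\tfrac12$ we still have $\tilde\sigma>\tfrac12$, so the disk remains strictly to the right of the critical line; the argument of Lemma~\ref{mean square}, which ultimately rests on Titchmarsh's mean square bound for $\zeta$ on vertical lines $\tmop{Re}(s)>\tfrac12$, therefore applies verbatim after the shift. No serious obstacle is anticipated.
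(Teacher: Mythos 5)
Your proof is correct, and the input you need (an $L^2$ bound on $M_{r+\delta}(s_0)$) is the same as in the paper; you are right to flag explicitly that Lemma~\ref{mean square} as stated gives the $L^2$ bound at radius $R=\sigma_0-\sigma$, whereas the radius needed here is the larger $r+\delta$, and your observation that one should apply Lemma~\ref{mean square} with $\sigma$ replaced by $\tilde\sigma=\sigma_0-(r+\delta)=\tfrac{3\sigma}{10}+\tfrac{7}{20}\in(\tfrac12,1)$ is exactly the right fix (the paper applies Lemma~\ref{mean square} to $M_{r+\delta}$ directly, tacitly invoking the same freedom in the choice of $\sigma$). Where you diverge from the paper is in the second step: the paper passes from the $L^2$ bound on $M_{r+\delta}$ to the $2k$-th moment of $\log M_{r+\delta}$ in a single stroke by applying Jensen's inequality to the concave function $f(x)=(\log(x+e^{2k-1}))^{2k}$, obtaining $\tfrac{1}{4^k}\bigl(\log(\tfrac1T\int_T^{2T}M_{r+\delta}^2\,dt+e^{2k-1})\bigr)^{2k}\ll(Ck)^{2k}$; you instead run Chebyshev's inequality on the $L^2$ bound to get the tail estimate $\mathrm{meas}\{t:M_{r+\delta}(s_0)>e^V\}\ll Te^{-2V}$ and then integrate via the layer-cake formula, landing on $T\,\Gamma(2k+1)/2^{2k}$. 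Both routes are elementary and give the same order $(Ck)^{2k}$; the Jensen argument is shorter (one display), while your tail-plus-layer-cake argument is perhaps more transparent about where the $\Gamma(2k+1)$-type growth comes from. Either is acceptable.

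One small bookkeeping note: in your layer-cake display you state the conclusion $\ll T\,\Gamma(2k+1)/2^{2k}$ but have implicitly absorbed the trivial contribution $T V_0^{2k}$ from $[0,V_0]$; since $V_0$ is a fixed $O(1)$ quantity and $k\ge 1$, this term is indeed $\ll(Ck)^{2k}$, so there is no issue, but it is worth keeping it in the display or remarking that it is dominated.
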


\begin{proof}
Let $f(x)=(\log (x+e^{2k-1}))^{2k}$, where $k \geq 1$. Note that $f''(x)<0$
for $x>0$. Thus, by Jensen's inequality
and Lemma \ref{mean square} we have
\begin{equation*}
\begin{split}
\frac1T \int_T^{2T} (\log M_{r+\delta} (s_0))^{2k}  dt <&
\frac{1}{4^k T}\int_T^{2T} (\log ((M_{r+\delta} (s_0))^2+e^{2k-1}))^{2k}  \, dt\\
\leq&
 \frac{1}{4^k} \Bigg(\log\bigg(\frac1T \int_T^{2T} (M_{r+\delta} (s_0))^2 \,dt+e^{2k-1}\bigg)\Bigg)^{2k}\\
\ll& (Ck)^{2k},
\end{split}
\end{equation*}
for some absolute constant $C$.
\end{proof}
\begin{proof}[Proof of Proposition \ref{apoint prop}]
First we consider the case $k \geq 1$. Note that
by Lemma \ref{first formula} we have
\begin{equation} \notag
\begin{split}
\int_T^{2T}  |\log |\zeta(\sigma+it)-a||^{2k} 
\, dt
 \leq& 4^k \int_T^{2T} \bigg( \sum_{|\rho_a-s_0| \leq r-\delta} 
|\log |\sigma+it-\rho_a||\bigg)^{2k} \, dt\\
&+O\bigg(  4^k \int_T^{2T} (\log M_{r}(s_0))^{2k} \, dt \bigg).
\end{split}
\end{equation}
Hence, for this case, we see that Proposition \ref{apoint prop} follows 
from the above inequality, Lemma \ref{sum bound}, 
and Lemma \ref{M bound}. For $0<k<1$ the proposition
follows from an application of H\"older's inequality.
\end{proof}

\section{Bounding the discrepancy: Proof of Theorem \ref{discrepancy thm}}

In order to prove Theorem \ref{discrepancy thm} we shall appeal to the following Lemma of Selberg (Lemma 4.1 of \cite{Tsang}), which provides a smooth approximation for the signum function. Selberg used this lemma in his proof that $\log\zeta(\tfrac12+it)$ has a limiting two-dimensional Gaussian distribution (see \cite{Tsang} and \cite{SelbergPaper}). Recall that the signum function is defined by
$$ \textup{sgn}(x)=\begin{cases} -1 & \text{ if } x<0, \\ 0 & \text{ if } x=0, \\ 1 & \text{ if } x>0.\end{cases}$$

\begin{lemma}[Selberg, Lemma 4.1 of \cite{Tsang}] \label{Selberg}
Let $L>0$. Define
$$G(u)=\frac{2u}{\pi}+2(1-u)u\cot(\pi u) \quad \text{ for } u\in [0,1].$$
Then for all $x\in \mathbb{R}$ we have
$$ \textup{sgn}(x)=  \int_0^LG\left(\frac{u}{L}\right)\sin(2\pi  ux)\frac{du}{u} + O\left(\left(\frac{\sin(\pi Lx)}{\pi L x}\right)^2\right).$$
Moreover, $G(u)$ is differentiable and $0\leq G(u)\leq 2/\pi$ for $u\in[0,1]$.
\end{lemma}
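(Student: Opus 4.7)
The plan is to approach the result via the Beurling--Selberg extremal function framework. The starting point is the classical identity
$$\textup{sgn}(x) = \frac{2}{\pi}\int_0^\infty \frac{\sin(2\pi u x)}{u}\, du,$$
which identifies the ``unsmoothed'' weight as the constant $2/\pi$, matching $G(0^+) = 2/\pi$ (from $\cot(\pi u)\sim 1/(\pi u)$ as $u\to 0$). The task is therefore to show that replacing this constant by the specific weight $G(u/L)$ on the truncated interval $[0,L]$ produces an error of Fej\'er-kernel type.

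First I would invoke Beurling's construction: for each $L>0$ there exist entire functions $B_L^\pm(z)$ of exponential type $2\pi L$ with $B_L^-(x) \le \textup{sgn}(x) \le B_L^+(x)$ for all real $x$, and $\int(B_L^+-\textup{sgn})\, dx = \int(\textup{sgn}-B_L^-)\, dx = 1/L$. By the Paley--Wiener theorem, the Fourier transforms of $B_L^\pm - \textup{sgn}$ are supported in $[-L,L]$, so $B_L^\pm$ admit sine-integral representations of the form $\int_0^L G^\pm(u/L)\sin(2\pi u x)\, du/u$ plus boundary contributions. The symmetric average $(B_L^+ + B_L^-)/2$ then approximates $\textup{sgn}(x)$ with pointwise error controlled by $(B_L^+ - B_L^-)/2$, which by the extremal property is a constant multiple of the Fej\'er kernel $(\sin(\pi L x)/(\pi L x))^2$, giving the claimed error.

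Second, to pin down the explicit form $G(u) = 2u/\pi + 2u(1-u)\cot(\pi u)$, I would compute the Fourier representation of the symmetric average directly and invoke the partial-fraction expansion $\pi\cot(\pi v) = 1/v + 2v\sum_{n\ge 1}(v^2-n^2)^{-1}$ to reconcile it with the interpolation formula underlying $B_L^\pm$ (which interpolates $\textup{sgn}$ and its derivative at the lattice points $n/L$). The elementary properties of $G$ then follow by inspection: the limits $G(0^+) = G(1) = 2/\pi$ are immediate, differentiability on $[0,1]$ follows from the factor $u(1-u)$ cancelling the poles of $\cot(\pi u)$ at both endpoints, and the uniform bound $0 \le G(u) \le 2/\pi$ can be verified by analyzing the critical points of $u(1-u)\cot(\pi u)$.

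The main obstacle is the bookkeeping required to match the explicit closed-form weight $G(u)$ against the Fourier transform arising from the Beurling--Selberg construction; this step requires careful use of cotangent identities and attention to boundary contributions in the Fourier inversion. Since the result is classical and due to Selberg, the cleanest route in the paper itself is simply to cite Tsang's thesis, where the complete derivation is recorded in detail.
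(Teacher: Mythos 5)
The paper itself offers no proof of this lemma; it is quoted verbatim with a pointer to Tsang's thesis, so your sketch is not competing against an argument in the text but against the classical derivation recorded there. The Beurling--Selberg route you outline is indeed the correct framework: the function $H_L(x) = \tfrac12(B_L^+(x)+B_L^-(x))$ is odd, $B_L^+ - B_L^- = 2\left(\tfrac{\sin\pi Lx}{\pi Lx}\right)^2$ exactly, and the Paley--Wiener support argument (applied to $B_L^\pm - \textup{sgn}$, as you correctly note, since $B_L^\pm$ themselves are not integrable) gives the sine-integral form. So the skeleton is right.

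Two issues remain. First, a concrete slip: from $G(u) = \tfrac{2u}{\pi} + 2u(1-u)\cot(\pi u)$, the limit at $u=1$ is
$$G(1) = \frac{2}{\pi} + 2\cdot 1\cdot\lim_{u\to 1}(1-u)\cot(\pi u) = \frac{2}{\pi} - \frac{2}{\pi} = 0,$$
since $\cot(\pi u)\sim \tfrac{1}{\pi(u-1)}$ near $u=1$. You assert $G(1)=2/\pi$, which is false; in fact $G$ decreases from $2/\pi$ at $u=0^+$ to $0$ at $u=1$, and the bound $0\le G\le 2/\pi$ is attained at both endpoints. This matters because the vanishing of $G$ at $u=1$ is exactly what makes the truncation at $u=L$ smooth and the resulting function of exponential type $2\pi L$ rather than having a hard-cutoff singularity.

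Second, and more substantively, the step you defer as ``bookkeeping'' --- reconciling the closed-form weight $2u/\pi + 2u(1-u)\cot(\pi u)$ with the Fourier data of $(B^+ + B^-)/2$ via the partial-fraction expansion of the cotangent --- is the entire mathematical content of the lemma. Everything else (the existence of $B^\pm$, the bound $B^+-B^- = 2K$, the odd symmetry, the Paley--Wiener support) is standard scaffolding and does not by itself produce the specific function $G$. As a proof, the sketch therefore stops exactly where the work begins; as a justification for citing Tsang, it is fine, and in that spirit the conclusion is correct.
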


For any rectangle $\mathcal{R}$ in the complex plane, let $\mathbf{1}_{\mathcal{R}}$ denote its indicator function. Using Lemma \ref{Selberg} we derive a smooth approximation for $\mathbf{1}_{\mathcal{R}}$ which will be used to prove Theorem \ref{discrepancy thm}. For any $\alpha, \beta\in \mathbb{R}$, we define
$$ f_{\alpha,\beta}(u):=\frac{e^{-2\pi i \alpha u}-e^{-2\pi i \beta u}}{2}.$$
Then, we prove
\begin{lemma}\label{ApproxRectangle}
Let $
\R=\{z=x+iy \in \mathbb{C}: a_1< x< a_2 \text{ and } b_1< y< b_2\},
$ and $L>0$ be a real number. For any $z=x+iy\in \mathbb{C}$ we have
\begin{align*}
\mathbf{1}_{\mathcal{R}}(z)=  W_{L, \mathcal{R}}(z) + & O\Big(\frac{\sin^2(\pi L(x-a_1))}{(\pi L (x-a_1))^2}+ \frac{\sin^2(\pi L(x-a_2))}{(\pi L (x-a_2))^2}\\
&+\frac{\sin^2(\pi L(y-b_1))}{(\pi L (y-b_1))^2}+ \frac{\sin^2(\pi L(y-b_2))}{(\pi L (y-b_2))^2}\Big)\\
\end{align*}
where $W_{L, \mathcal{R}}(z)$ equals
$$\frac12\textup{Re}\int_0^L\int_0^LG\left(\frac{u}{L}\right)G\left(\frac{v}{L}\right)\left(e^{2\pi i(ux-vy)}f_{a_1,a_2}(u)\overline{f_{b_1,b_2}(v)}-e^{2\pi i(ux+vy)}f_{a_1,a_2}(u)f_{b_1,b_2}(v)\right)\frac{du}{u}\frac{dv}{v}.
$$
\end{lemma}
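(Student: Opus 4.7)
The plan is to factor $\mathbf{1}_{\mathcal{R}}(z) = \mathbf{1}_{(a_1,a_2)}(x)\,\mathbf{1}_{(b_1,b_2)}(y)$ and to approximate each one-dimensional indicator separately using Selberg's smoothed signum formula. Writing $\mathbf{1}_{(a_1,a_2)}(x) = \tfrac{1}{2}\bigl(\mathrm{sgn}(x-a_1) - \mathrm{sgn}(x-a_2)\bigr)$ and applying Lemma~\ref{Selberg} to each signum gives
\[
\mathbf{1}_{(a_1,a_2)}(x) = \tfrac{1}{2}\int_0^L G(u/L)\bigl[\sin(2\pi u(x-a_1)) - \sin(2\pi u(x-a_2))\bigr]\tfrac{du}{u} + O(E_x),
\]
where $E_x$ is the sum of the two sinc-squared error terms $\bigl(\tfrac{\sin \pi L(x-a_j)}{\pi L(x-a_j)}\bigr)^2$, and the analogous identity holds for $\mathbf{1}_{(b_1,b_2)}(y)$ with error $E_y$.

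Next I would rewrite each sine as $\tfrac{1}{2i}(e^{i\theta} - e^{-i\theta})$ and pull the factor $e^{\pm 2\pi iu x}$ out; the difference of sines then collapses into $\tfrac{1}{i}\bigl(e^{2\pi iu x} f_{a_1,a_2}(u) - e^{-2\pi iu x}\overline{f_{a_1,a_2}(u)}\bigr)$. Consequently
\[
\mathbf{1}_{(a_1,a_2)}(x) = \mathrm{Im}\,I_1(x) + O(E_x), \qquad I_1(x) := \int_0^L G(u/L)\, e^{2\pi iu x} f_{a_1,a_2}(u)\,\tfrac{du}{u},
\]
and similarly $\mathbf{1}_{(b_1,b_2)}(y) = \mathrm{Im}\,I_2(y) + O(E_y)$, with $I_2(y)$ defined analogously using $f_{b_1,b_2}(v)$.

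Multiplying the two approximations yields $\mathbf{1}_{\mathcal{R}}(z) = \mathrm{Im}(I_1)\,\mathrm{Im}(I_2) + O(E_x + E_y)$, where the cross terms $\mathrm{Im}(I_j)\cdot E_k$ and $E_xE_y$ are absorbed because each $\mathrm{Im}(I_j)$ is $O(1)$ (the one-dimensional indicator is bounded by $1$ and each sinc-squared error is uniformly bounded by an absolute constant). Finally, the elementary identity $\mathrm{Im}(z_1)\,\mathrm{Im}(z_2) = \tfrac{1}{2}\,\mathrm{Re}(z_1\overline{z_2} - z_1 z_2)$, applied with $z_1 = I_1(x)$ and $z_2 = I_2(y)$, produces exactly the double integral $W_{L,\mathcal{R}}(z)$ in the statement: the piece $I_1\overline{I_2}$ gives the $e^{2\pi i(ux-vy)} f_{a_1,a_2}(u)\overline{f_{b_1,b_2}(v)}$ contribution, while $I_1 I_2$ gives the $e^{2\pi i(ux+vy)} f_{a_1,a_2}(u) f_{b_1,b_2}(v)$ contribution, with the correct relative sign. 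The argument is essentially bookkeeping once Lemma~\ref{Selberg} is in hand; the only delicate point is verifying that the conjugations and signs in the expansion of $\mathrm{Im}(I_1)\,\mathrm{Im}(I_2)$ line up precisely with the formula defining $W_{L,\mathcal{R}}$.
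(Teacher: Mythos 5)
Your proof follows the paper's argument exactly: factor $\mathbf{1}_{\mathcal{R}}$ into one-dimensional indicators, write each as half a difference of signum functions, apply Lemma~\ref{Selberg}, recombine into $\mathrm{Im}\int_0^L G(u/L)\,e^{2\pi iux}f_{\alpha,\beta}(u)\,du/u$, and then use the identity $\mathrm{Im}(w_1)\mathrm{Im}(w_2)=\tfrac12\mathrm{Re}(w_1\overline{w_2}-w_1w_2)$. The only thing you add is the explicit bookkeeping for the cross error terms via $\mathrm{Im}(I_j)=O(1)$, which the paper leaves implicit, but the argument is the same.
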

\begin{proof}
Here and throughout we shall denote by $\mathbf{1}_{\alpha,\beta}$ the indicator function of the interval $(\alpha,\beta)$. Observe that
$$ \mathbf{1}_{\alpha,\beta}(x)=\frac{\textup{sgn}(x-\alpha)-\textup{sgn}(x-\beta)}{2}+O\big(\delta(x-\alpha)+\delta(x-\beta)\big),$$
where $\delta(x)$ is the Dirac delta function (it equals $1$ when $x=0$, and zero otherwise).

Furthermore, it follows from Lemma \ref{Selberg} that
\begin{equation}\label{ApproxInterval}
\mathbf{1}_{\alpha,\beta}(x)= \textup{Im} \int_0^L G\left(\frac{u}{L}\right)e^{2\pi i ux}f_{\alpha,\beta}(u)\frac{du}{u} + O\left(\frac{\sin^2(\pi L(x-\alpha))}{(\pi L (x-\alpha))^2}+ \frac{\sin^2(\pi L(x-\beta))}{(\pi L (x-\beta))^2}\right).
\end{equation}
The result follows from the fact that $\mathbf{1}_{\mathcal{R}}(z)= \mathbf{1}_{a_1,a_2}(x)\mathbf{1}_{b_1,b_2}(y)$ together with \eqref{ApproxInterval} and  the identity
\begin{equation}\label{ComplexIdentity}
\textup{Im}(w_1)\textup{Im}(w_2)= \frac{1}{2}\textup{Re}(w_1\overline{w_2}-w_1w_2).
\end{equation}
\end{proof}
The last ingredient we need in order to establish Theorem \ref{discrepancy thm} is the following lemma.
\begin{lemma}\label{FourierDecay}
Let $\tfrac12 <\sigma\leq 1$. Let $u$ be a large positive real number, then
$$ \ex\Big(\exp\Big(i u \tmop{Re}\log \zeta(\sigma,X)\Big) \Big)\ll \exp\left(-\frac{u}{5\log u}\right),$$
and
$$ \ex\Big(\exp\Big(i u \tmop{Im}\log \zeta(\sigma,X)\Big) \Big)\ll \exp\left(-\frac{u}{5\log u}\right).$$
\end{lemma}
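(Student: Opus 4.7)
The plan is threefold: (i) use independence of the $X(p)$'s to write the characteristic function as an Euler product, (ii) bound each factor by the method of stationary phase, and (iii) combine the factors via the Prime Number Theorem.

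\emph{Euler product.} Since $\tmop{Re} \log \zeta(\sigma,X) = -\sum_p \log|1-X(p)/p^\sigma|$ and the $X(p)$'s are independent,
\[
\ex\bigl(\exp(iu\tmop{Re}\log\zeta(\sigma,X))\bigr) = \prod_p \phi_p(u), \qquad \phi_p(u) := \frac{1}{2\pi}\int_0^{2\pi} e^{-iu h_p(\theta)}\,d\theta,
\]
where $h_p(\theta) = \log|1-e^{i\theta}/p^\sigma| = \tfrac12 \log(1-2\alpha\cos\theta+\alpha^2)$ with $\alpha = 1/p^\sigma$.

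\emph{Stationary phase bound.} The phase $h_p$ has exactly two critical points on $[0,2\pi)$, at $\theta = 0$ and $\theta = \pi$, with
\[
h_p''(0) = \frac{\alpha}{(1-\alpha)^2}, \qquad h_p''(\pi) = -\frac{\alpha}{(1+\alpha)^2}.
\]
For $\sigma > 1/2$ and $p \geq 2$ we have $\alpha \leq 2^{-1/2}$, so both second derivatives are non-degenerate and of size $\asymp 1/p^\sigma$ uniformly. Applying stationary phase and summing the leading contributions from the two critical points yields the uniform bound
\[
|\phi_p(u)| \leq \sqrt{\frac{2 p^\sigma}{\pi u}} + O\bigl((u/p^\sigma)^{-1}\bigr), \qquad u \geq p^\sigma.
\]

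\emph{Combining over primes.} Set $X = (\pi u/2)^{1/\sigma}$ so that $\sqrt{2p^\sigma/(\pi u)} \leq 1$ exactly for $p \leq X$, and use the trivial bound $|\phi_p(u)| \leq 1$ for $p > X$. Taking logarithms, substituting $\log u = \sigma \log X + \log(2/\pi)$, and simplifying,
\[
\log \prod_{p \leq X} \sqrt{2 p^\sigma / (\pi u)} = \frac{\sigma}{2}\bigl(\theta(X) - \pi(X)\log X\bigr) = -\frac{\sigma}{2}\int_2^X \frac{\pi(t)}{t}\,dt
\]
by Abel summation, where $\theta(X) = \sum_{p\leq X}\log p$. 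By the Prime Number Theorem, $\int_2^X \pi(t)/t\,dt = (1+o(1))X/\log X$. Since $X \geq \pi u/2 \geq u$ for $\sigma \leq 1$ and $\log X \asymp \log u$, the above is bounded by $-\pi u/(4\log u) + o(u/\log u)$ in the extremal case $\sigma = 1$, and strictly stronger for $\sigma < 1$. Since $\pi/4 > 1/5$, the claimed bound $\exp(-u/(5\log u))$ follows for $u$ sufficiently large.

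\emph{Imaginary part and obstacle.} The same strategy applies to $\tmop{Im}\log\zeta(\sigma,X) = -\sum_p \arg(1-X(p)/p^\sigma)$: the phase $g_p(\theta) = \arg(1-\alpha e^{i\theta})$ has two non-degenerate critical points where $\cos\theta = \alpha$, with $|g_p''(\theta_0)| = \alpha/\sqrt{1-\alpha^2} \asymp 1/p^\sigma$. The single-prime bound is identical in form, and the PNT argument carries over unchanged. The main technical point is ensuring that the implied constants in the stationary phase estimate are uniform in $p$ and in $\sigma \in (1/2,1]$; this is guaranteed by the uniform lower bound $1 - 1/p^\sigma \geq 1 - 2^{-1/2}$, which controls all relevant derivatives of $h_p$ independently of the prime.
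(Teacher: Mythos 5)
Your approach is sound in outline and genuinely different from the paper's: where the paper Taylor-expands $-\log(1-X(p)/p^\sigma) = X(p)p^{-\sigma} + O(p^{-2\sigma})$, discards all primes outside $[\sqrt u, u/2]$, recognises each remaining factor as $J_0(u/p^\sigma)$, and then uses the elementary fact that $|J_0(x)|\leq e^{-1/2}$ for $x\geq 2$, you instead keep the exact phase $h_p(\theta)=\log|1-e^{i\theta}/p^\sigma|$, perform stationary phase on it, and sum the logarithms over a long range of primes. Your route avoids the Taylor-error bookkeeping entirely and, if pushed through carefully, would give a better numerical constant in the exponent (you get essentially $\pi/4$ rather than $1/5$); the paper's route is shorter and all constants are trivially uniform. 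Both hinge on the same two facts — the Euler-product factorisation of $\ex\exp(iu\tmop{Re}\log\zeta(\sigma,X))$ and the Prime Number Theorem to accumulate $\asymp u/\log u$ decaying factors — so the difference is in how the per-prime factor is bounded.

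There is, however, one concrete inconsistency and one gap to repair. First, you state the stationary-phase bound for $u\geq p^\sigma$, i.e.\ $p\leq u^{1/\sigma}$, but then apply it for all $p\leq X=(\pi u/2)^{1/\sigma}$, which is a strictly larger range since $\pi/2>1$; you should truncate at $u^{1/\sigma}$ (or prove the bound for $u\geq (2/\pi)p^\sigma$, which stationary phase certainly allows). Second — and this is the real issue — near the endpoint $p\approx X$ (equivalently $u/p^\sigma\asymp 1$) your error term $O((u/p^\sigma)^{-1})$ is $O(1)$, which is the same size as the claimed main term $\sqrt{2p^\sigma/(\pi u)}\approx 1$; so the inequality $|\phi_p(u)|\leq \sqrt{2p^\sigma/(\pi u)}$ that you plug into $\log\prod_{p\leq X}$ is not actually established in a neighbourhood of the endpoint. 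The fix is cheap: apply the stationary-phase bound only for $p\leq X/(\log X)^2$, say, and use $|\phi_p|\leq 1$ for the remaining primes; this sacrifices only a $o(u/\log u)$ amount in the exponent and leaves the conclusion intact. With that repair, and with the uniformity remark you already flag at the end carried out carefully (the non-degeneracy $|h_p''|\asymp p^{-\sigma}$ and the a priori bound $1-p^{-\sigma}\geq 1-2^{-1/2}$ do suffice), your argument is correct.
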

\begin{proof}
First, note that $\ex(e^{is \tmop{Re} X(p)})=\ex(e^{is \tmop{Im} X(p)})=J_0(s)$ for all $s\in \mathbb{R}$ and all primes $p$,  where $J_0(s)$ is the Bessel function of order $0$. We shall prove only the first inequality since the second can be derived similarly. We have
$$ \ex\Big(\exp\Big(iu \tmop{Re}\log \zeta(\sigma,X)\Big) \Big)= \prod_{p} \ex\left(\exp\left(-iu \tmop{Re}\log\left(1-\frac{X(p)}{p^{\sigma}}\right)\right)\right).$$
Therefore, we deduce that
\begin{align*} \left|\ex\Big(\exp\Big(iu \tmop{Re}\log \zeta(\sigma,X)\Big) \Big)\right| &\leq \prod_{\sqrt{u}\leq p\leq  u}\ex\left(\exp\left(\frac{i u}{p^{\sigma}}\tmop{Re} X(p) +O\left(\frac{u}{p^{2\sigma}}\right)\right)\right)\\
&= \exp\left(O\left(u^{3/2-\sigma}\right)\right)\prod_{\sqrt{u}\leq p\leq u/2}J_0\left(\frac{u}{p^{\sigma}}\right).
\end{align*}
Now, using that $|J_0(x)|\leq e^{-1/2}$ for all $x\geq 2$, along with the prime number theorem we obtain
$$
 \left|\ex\Big(\exp\Big(iu \tmop{Re}\log \zeta(\sigma,X)\Big) \Big)\right| \ll \exp\left(-\frac{1}{2}\pi(u/2)+ O\left(u^{3/2-\sigma}\right)\right)\ll \exp\left(-\frac{u}{5\log u}\right),
$$
as desired.
\end{proof}
\begin{proof}[Proof of Theorem \ref{discrepancy thm}]
We only consider the case where $\tfrac12<\sigma<1$, since the analogous result for $\sigma=1$ can be obtained along similar lines. To shorten our notation we let
$$\Psi_T(\R)= \mathbb{P}_{T} \Big (\log \zeta(\sigma + it) \in \mathcal R \Big ), \textup{ and } \Psi(\R)= \mathbb P\Big( \log \zeta(\sigma, X) \in \mathcal R \Big).$$

Let $\R$ be  a rectangle with  sides parallel to the coordinate axes, and $\widetilde{\R}= \R\cap  [-\log\log T, \log\log T]\times [-\log\log T, \log\log T]$. Then using the large deviation result \eqref{large deviation asymptotic}  we deduce that
$$ \Psi_T\big(\R\big)= \Psi_T\big(\widetilde{\R}\big)+O\left(\frac{1}{(\log T)^2}\right).$$  
Similarly, one has 
$$ \Psi(\R)= \Psi\big(\widetilde{\R}\big)+O\left(\frac{1}{(\log T)^2}\right).$$
Let $\s$ be the set of rectangles $\R \subset  [-\log\log T, \log\log T]\times [-\log\log T, \log\log T]$ with  sides parallel to the coordinate axes. Then, we deduce that 
$$ D_{\sigma}(T)= \sup_{\R\in \s} |\Psi_T(\R)-\Psi(\R)| + O\left(\frac{1}{(\log T)^2}\right).$$

Let  $\R$ be a rectangle in $\s$ and $L$  a positive real number to be chosen later. Then it follows from Lemma \ref{ApproxRectangle}  that
\begin{equation}\label{distribution}
\Psi_T\big(\R\big)= \frac{1}{T}\int_T^{2T} W_{L, \R}\big(\log\zeta(\sigma+it)\big) dt +O\Big(I_{T}(L,a_1)+ I_{T}(L,a_2)+J_{T}(L,b_1)+J_{T}(t,b_2)\Big)
\end{equation}
where
$$I_{T}(L,s)= \frac{1}{T}\int_{T}^{2T}\frac{\sin^2\big(\pi L(\tmop{Re} \log\zeta(\sigma+it)-s)\big)}{(\pi L(\tmop{Re} \log\zeta(\sigma+it)-s))^2}dt,$$
and
$$ J_{T}(L,s)= \frac{1}{T}\int_{T}^{2T}\frac{\sin^2\big(\pi L(\tmop{Im} \log\zeta(\sigma+it)-s)\big)}{(\pi L(\tmop{Im} \log\zeta(\sigma+it)-s))^2}dt.$$
It follows from Theorem \ref{characteristic thm} that there exists a positive constant $c=c(\sigma)$ such that for all $|u|, |v|\leq c(\log T)^{\sigma}$ we have 
\begin{equation}\label{characteristic}
\Phi_{\sigma,T}(u, v)= \Phi_{\sigma}^{\textup{rand}}(u,v)+ O\left(\frac{1}{(\log T)^5}\right).
\end{equation}
First, we handle the main term of \eqref{distribution}
\begin{equation}\label{MainTerm}
\begin{aligned}
\frac{1}{T}\int_T^{2T} W_{L, \R}\big(\log\zeta(\sigma+it)\big) dt
=\frac12\textup{Re}\int_0^L\int_0^L&G\left(\frac{u}{L}\right)G\left(\frac{v}{L}\right)\Big(\Phi_{\sigma,T}(2\pi u, -2\pi v)f_{a_1,a_2}(u)\overline{f_{b_1,b_2}(v)}\\
&-\Phi_{\sigma,T}(2\pi u, 2\pi v)f_{a_1,a_2}(u)
f_{b_1,b_2}(v)\Big)\frac{du}{u}\frac{dv}{v}.
\end{aligned}
\end{equation}
We choose $L= c(\log T)^{\sigma}$. Then inserting the estimate \eqref{characteristic} in equation \eqref{MainTerm} and using that
\begin{equation}\label{boundf}
|f_{\alpha,\beta}(u)|=\frac{1}{2}\left|\int_{2\pi \alpha u}^{2\pi \beta u} e^{-it}dt\right|\leq \pi u |\beta-\alpha|,
\end{equation}
we obtain
\begin{equation}\label{MainTerm2}
\begin{aligned}
\frac{1}{T}\int_T^{2T} W_{L, \R}\big(\log\zeta(\sigma+it)\big) dt
= & \ex\left(W_{L, \R}\big(\log \zeta(\sigma, X)\big)\right)+ O\left(\text{meas}_2(\R)\frac{L^2}{(\log T)^5}\right)\\
= & \ex\left(W_{L, \R}\big(\log \zeta(\sigma, X)\big)\right)+ O\left(\frac{1}{(\log T)^2}\right),\\
\end{aligned}
\end{equation}
where $\text{meas}_2$ denotes the two-dimensional Lebesgue measure.
Furthermore we infer from Lemma \ref{ApproxRectangle}
\begin{equation}\label{MainTerm3}
\begin{aligned}
\ex\big(W_{L, \R}\big(\log \zeta(\sigma, X)\big)\big)= &\ex\left(\mathbf{1}_{\mathcal{R}}\big(\log \zeta(\sigma, X)\big)\right) \\
& +O\Big(I_{\text{rand}}(L,a_1)+ I_{\text{rand}}(L,a_2)+J_{\text{rand}}(L,b_1)+J_{\text{rand}}(L,b_2)\Big),\\
\end{aligned}
\end{equation}
where
$$I_{\text{rand}}(L,s)= \ex\left(\frac{\sin^2\big(\pi L(\tmop{Re} \log \zeta(\sigma, X)-s)\big)}{(\pi L(\tmop{Re} \log \zeta(\sigma, X)-s))^2}\right),$$
and
$$ J_{\text{rand}}(L,s)= \ex\left(\frac{\sin^2\big(\pi L(\tmop{Im} \log \zeta(\sigma, X)-s)\big)}{(\pi L(\tmop{Im} \log \zeta(\sigma, X)-s))^2}\right).$$
Note that $\ex\left(\mathbf{1}_{\mathcal{R}}\big(\log \zeta(\sigma, X)\big)\right)=\pr\left(\log \zeta(\sigma,X)\in \R\right)$. Moreover, in order to bound $I_{\text{rand}}(L,s)$ and $J_{\text{rand}}(L,s)$ we use the following identity
\begin{equation}\label{trigo}
 \frac{\sin^2(\pi L x)}{(\pi Lx)^2}= \frac{2(1-\cos(2\pi Lx))}{L^2(2\pi x)^2}= \frac{2}{L^2}\int_0^L(L-v) \cos(2\pi xv)dv.
\end{equation}
Indeed, using \eqref{trigo} along with Lemma \ref{FourierDecay} we obtain
\begin{equation}\label{error1}
\begin{aligned}
 I_{\text{rand}}(L,s) &=  \ex\left(\tmop{Re}\int_0^L \frac{2(L-v)}{L^2}\exp\Big(2\pi iv \big(\tmop{Re} \log \zeta(\sigma,X)-s\big)\Big) dv\right)\\
 &= \tmop{Re} \int_{0}^L\frac{2(L-v)}{L^2} e^{-2\pi i v s} \Phi_{\sigma}^{\textup{rand}}(2\pi v, 0) dv\\
 &\ll \frac{1}{L} \left(1+\int_2^L \exp\left(-\frac{v}{\log v}\right) dv \right)\\
 & \ll \frac{1}{L}.
\end{aligned}
\end{equation}
uniformly for all $s\in \mathbb{R}$. Similarly, one obtains that $J_{\text{rand}}(L,s)\ll 1/L$. Therefore, inserting these estimates in \eqref{MainTerm3} and using \eqref{MainTerm2} we deduce
\begin{equation}\label{MainTerm4}
\frac{1}{T}\int_T^{2T} W_{L, \R}\big(\log\zeta(\sigma+it)\big) dt= \pr\left(\log \zeta(\sigma,X)\in \R\right) +O\left(\frac{1}{L}\right).
\end{equation}

Now it remains to bound the error term on the right hand side of \eqref{distribution}. Using the identity \eqref{trigo} along with equations \eqref{characteristic} and \eqref{error1} we obtain
\begin{equation*}
\begin{aligned}
 I_T(L,s) &= \tmop{Re} \frac{1}{T}\int_T^{2T}\int_0^L \frac{2(L-v)}{L^2}\exp\Big(2\pi iv \big(\tmop{Re} \log\zeta(\sigma+it)-s\big)\Big) dvdt\\
 &= \tmop{Re} \int_{0}^L\frac{2(L-v)}{L^2} e^{-2\pi i v s} \Phi_{\sigma,T}(2\pi v, 0) dv\\
 &= \tmop{Re} \int_{0}^L\frac{2(L-v)}{L^2} e^{-2\pi i v s} \Phi_{\sigma}^{\textup{rand}}(2\pi v, 0) dv +O\left(\frac{1}{(\log T)^5}\right)\\
 & \ll \frac{1}{L},
\end{aligned}
\end{equation*}
uniformly for all $s\in \mathbb{R}$.
Moreover, the bound $J_T(L,s)\ll 1/L$ can be obtained along the same lines. Therefore, combining these estimates with \eqref{distribution} and \eqref{MainTerm4} we deduce
$$ \Psi_T(\R)= \Psi(\R)+O\left(\frac{1}{(\log T)^{\sigma}}\right),$$
which completes the proof.
\end{proof}

\section{Large deviations: Proof of Theorem \ref{large thm}}

For $z\in \mathbb{C}$ we define
$$ M(z)= \log \ex(|\zeta(\sigma,X)|^z).$$
Further, let $\kappa$ be the unique positive solution to the equation
$M'(k)=\tau.$ One of the main ingredients in the proof of Theorem \ref{large thm} is the following proposition which is established using the saddle-point method.
\begin{proposition}\label{SaddlePoint}
Let $\tfrac12 < \sigma<1$.
Uniformly for $\tau\geq 1$ we have
$$\pr(\log|\zeta(\sigma,X)|>\tau)= \frac{\ex\left(|\zeta(\sigma, X)|^{\kappa}\right)e^{-\tau \kappa}}{k \sqrt{2\pi M''(\kappa)}}\left(1+O\left(\kappa^{1-\frac{1}{\sigma}}\log \kappa \right)\right).$$
\end{proposition}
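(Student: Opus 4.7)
The strategy is the classical saddle-point / Cram\'er-type analysis of the upper tail of $Y := \log|\zeta(\sigma, X)|$. The first step is to express the tail as a Mellin inversion contour integral,
\begin{equation*}
\pr(Y > \tau) = \frac{1}{2\pi \mathi}\int_{\kappa - \mathi\infty}^{\kappa + \mathi\infty} e^{M(z) - \tau z}\,\frac{dz}{z},
\end{equation*}
with the line of integration passing through the saddle point $\kappa$ of $M(z) - \tau z$. I would derive this identity by Fourier-inverting the density of $Y$ (whose existence follows from the Bohr--Jessen construction), then shifting the contour from the imaginary axis to $\tmop{Re}(z) = \kappa$; no residue is crossed because the only pole, at $z = 0$, lies to the left of the new contour, and the horizontal segments at height $\pm T \to \pm\infty$ are controlled by the decay of the characteristic function of $Y$ (essentially the $\kappa = 0$ case of the tail estimate below).

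Parametrizing $z = \kappa + \mathi t$ and using that $M'(\kappa) = \tau$, one gets
\begin{equation*}
\pr(Y > \tau) = \frac{e^{M(\kappa) - \tau\kappa}}{2\pi}\int_{-\infty}^{\infty} \frac{e^{M(\kappa + \mathi t) - M(\kappa) - \mathi t\tau}}{\kappa + \mathi t}\,dt.
\end{equation*}
The plan is to split this integral at $|t| = T_0$, with $T_0$ chosen just above $1/\sqrt{M''(\kappa)}$ but small enough that the cubic Taylor remainder stays controlled. On the central range $|t| \leq T_0$, Taylor expansion yields
\begin{equation*}
M(\kappa + \mathi t) - M(\kappa) - \mathi t\tau = -\tfrac{1}{2}t^2 M''(\kappa) + O\bigl(|t|^3 \sup_{|s| \leq T_0} |M'''(\kappa + \mathi s)|\bigr),
\end{equation*}
and combined with $1/(\kappa + \mathi t) = 1/\kappa + O(|t|/\kappa^2)$, this reduces the central portion to a Gaussian integral producing the main term $e^{M(\kappa) - \tau\kappa}/\bigl(\kappa \sqrt{2\pi M''(\kappa)}\bigr)$, with the cubic remainder contributing a relative error of order $M'''(\kappa)/M''(\kappa)^{3/2}$, which under the expected asymptotic sizes of the derivatives of $M$ works out to $O(\kappa^{1 - 1/\sigma}\log\kappa)$.

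For the tail $|t| > T_0$ what is needed is a bound on the tilted characteristic function
\begin{equation*}
\frac{|\ex(|\zeta(\sigma, X)|^\kappa e^{\mathi t Y})|}{\ex(|\zeta(\sigma, X)|^\kappa)} = \prod_p \frac{|\ex(|1 - X(p)/p^\sigma|^{-\kappa - \mathi t})|}{\ex(|1 - X(p)/p^\sigma|^{-\kappa})}.
\end{equation*}
For primes with $p^\sigma \gg \kappa + |t|$, a two-term Taylor expansion in $X(p)/p^\sigma$ controls each factor by $1 + O((\kappa + |t|)^2/p^{2\sigma})$, whose product over such primes is uniformly bounded. For smaller primes, an oscillatory integral argument in the spirit of Lemma \ref{FourierDecay}, exploiting the rapid variation of the phase $-t \log|1 - X(p)/p^\sigma|$ as $X(p)$ runs around the unit circle, should yield exponential decay in $|t|$ from the primes in a window such as $\sqrt{|t|} \leq p \leq |t|$, making the tail contribution negligible next to the main term.

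The main obstacle will be producing sufficiently sharp uniform asymptotics for $M(\kappa)$, $M''(\kappa)$, and $M'''(\kappa)$ as $\kappa \to \infty$, since these determine both the main term and, through the cubic Taylor remainder, the precise form of the relative error. Such asymptotics should be extractable from the analysis of the Euler product $\prod_p \ex(|1 - X(p)/p^\sigma|^{-z})$ already present in \cite{LamzouriLarge} combined with the prime number theorem, but keeping careful track of the logarithmic factors when balancing the truncation point $T_0$, the Gaussian main contribution, and the cubic remainder is what ultimately produces the stated relative error $O(\kappa^{1 - 1/\sigma}\log\kappa)$.
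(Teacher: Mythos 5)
Your overall strategy coincides with the paper's: a saddle--point analysis of $\pr(Y>\tau)$ by contour integration on the line $\tmop{Re}(s)=\kappa$, a Gaussian reduction on the central range using $M'(\kappa)=\tau$, and decay of the tilted characteristic function $\ex(|\zeta(\sigma,X)|^{\kappa+it})/\ex(|\zeta(\sigma,X)|^\kappa)$ on the tail, with the asymptotics of $M$, $M''$, $M'''$ supplied by the analysis of the Euler product (Proposition \ref{saddle1} and Lemma \ref{Bessel2}). Two points differ, one minor and one a genuine gap.

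The minor point: the paper never writes the unsmoothed Perron integral $\int e^{M(z)-\tau z}\,dz/z$. It instead uses a smoothed Perron formula (Lemma \ref{SmoothPerron} with $N=1$), sandwiching $\pr(Y>\tau)$ between two absolutely convergent integrals with kernel $(e^{\lambda s}-1)/(\lambda s^2)$, and takes $\lambda=\kappa^{-3}$ at the end. This avoids any appeal to existence and regularity of the density of $Y$, and it makes the truncation of the contour at $|t|=\kappa$ clean. Your unsmoothed formula is recoverable with enough care (the decay from Lemma \ref{Decay} makes the $s$--integral absolutely convergent), but the justification of the inversion itself is not "just Fourier inversion of the density"; the paper's route is both simpler and fully rigorous.

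The real gap is in your treatment of the tail $|t|>T_0$ with $T_0\approx M''(\kappa)^{-1/2}\asymp\kappa^{1-1/(2\sigma)}\sqrt{\log\kappa}$. You propose to bound the tilted characteristic function there by an oscillatory/Bessel-type argument "in the spirit of Lemma \ref{FourierDecay}," using cancellation from primes in a window such as $\sqrt{|t|}\leq p\leq|t|$. That mechanism does \emph{not} operate in the intermediate regime $T_0<|t|<\kappa$. For a prime with $p^\sigma\ll\kappa$, the tilted single-prime factor is
$\ex\bigl(|1-X(p)/p^\sigma|^{-\kappa-\mathi t}\bigr)/\ex\bigl(|1-X(p)/p^\sigma|^{-\kappa}\bigr)$,
and since $|1-X(p)/p^\sigma|^{-\kappa}$ is sharply concentrated near $X(p)=1$ when $\kappa/p^\sigma$ is large, the numerator is a Fourier transform of a nearly point mass in the $X(p)$ variable, so there is essentially no decay in $t$ from such primes. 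Conversely, the Taylor-expansion control $1+O((\kappa+|t|)^2/p^{2\sigma})$ requires $p^\sigma\gg\kappa$, leaving no window of primes that both carry cancellation in $t$ and survive the tilt when $|t|<\kappa$. This is exactly why the paper's Lemma \ref{Decay} is stated only for $|t|\geq\kappa$. The fix (and what the paper does) is to \emph{not} introduce $T_0$: run the Taylor expansion
$M(\kappa+\mathi t)-M(\kappa)-\mathi t\tau=-\tfrac12 t^2 M''(\kappa)+O\bigl(M'''(\kappa)|t|^3\bigr)$
on the whole range $|t|\leq\kappa$, noting that $M'''(\kappa)|t|^3\leq M'''(\kappa)\kappa\cdot t^2\ll M''(\kappa)t^2$ there so the Gaussian envelope $\exp(-\tfrac12 t^2 M''(\kappa))$ is preserved (up to a bounded factor) and truncates the integral at $|t|\asymp M''(\kappa)^{-1/2}$ by itself; then invoke Lemma \ref{Decay} only for $|t|>\kappa$. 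Finally, your claim that the cubic remainder evaluates to $M'''(\kappa)/M''(\kappa)^{3/2}\asymp\kappa^{1-1/\sigma}\log\kappa$ is not right numerically; Proposition \ref{saddle1} gives $M'''(\kappa)/M''(\kappa)^{3/2}\asymp\kappa^{-1/(2\sigma)}(\log\kappa)^{1/2}$, which is in fact smaller than the error $\kappa^{1-1/\sigma}\log\kappa$ stated in the proposition (the latter is only an upper bound), so the asserted equality should be replaced by a one-sided estimate.
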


\subsection{Preliminaries}

Let $\chi(y)=1$ if $y>1$ and be equal to $0$ otherwise. Then we have the following smooth analogue of Perron's formula, which is a slight variation of a formula of Granville and Soundararajan (see \cite{GranvilleSound}).

\begin{lemma}\label{SmoothPerron}
Let $\lambda>0$ be a real number and $N$ be a positive integer. For any $c>0$ we have for $y>0$
$$
0\leq \frac{1}{2\pi i}\int_{c-i\infty}^{c+i\infty} y^s \left(\frac{e^{\lambda s}-1}{\lambda s}\right)^N \frac{ds}{s} -\chi(y)\leq 
\frac{1}{2\pi i}\int_{c-i\infty}^{c+i\infty} y^s \left(\frac{e^{\lambda s}-1}{\lambda s}\right)^N \frac{1-e^{-\lambda N s}}{s}ds.
$$
\end{lemma}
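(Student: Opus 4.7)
The plan is to put both contour integrals in closed form by representing the kernel as a Laplace transform, applying Perron's formula pointwise, and then verifying the two inequalities directly.

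Write $\frac{e^{\lambda s}-1}{\lambda s}=\int_0^1 e^{\lambda ts}\,dt$ and expand the $N$-th power to obtain
$$
K(s):=\left(\frac{e^{\lambda s}-1}{\lambda s}\right)^{\!N}=\int_0^{N\lambda}\!e^{us}f_N(u)\,du,
$$
where $f_N$ is the density of $\lambda(U_1+\cdots+U_N)$ with $U_1,\dots,U_N$ i.i.d.\ uniform on $[0,1]$; in particular $f_N\ge 0$, $\int_0^{N\lambda}f_N=1$, and on the line $\operatorname{Re}(s)=c$ one has $|K(s)|\ll |s|^{-N}$, so both integrands in the statement are absolutely integrable for $N\ge 1$. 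Next I truncate the contour at height $U$, apply Fubini, and then let $U\to\infty$ under the $u$-integral using the effective Perron formula $\frac{1}{2\pi i}\int_{c-iU}^{c+iU}\frac{z^s}{s}\,ds=\chi(z)+O(z^c/(U|\log z|))$; the exceptional set $\{u:ye^u=1\}$ is $f_N$-null, so the limit passes through. This yields
$$
\frac{1}{2\pi i}\int_{c-i\infty}^{c+i\infty} y^s K(s)\frac{ds}{s}=\int_0^{N\lambda}\!f_N(u)\chi(ye^u)\,du=:F(y),
$$
and, after splitting $(1-e^{-N\lambda s})/s=1/s-e^{-N\lambda s}/s$ and substituting $y\mapsto ye^{-N\lambda}$ in the second piece, the upper-bound integral equals $F(y)-F(ye^{-N\lambda})$.

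With these two closed forms the rest is a short case check. For the lower bound $F(y)\ge\chi(y)$: if $y>1$ then $ye^u>1$ on $[0,N\lambda]$, so $F(y)=\int f_N=1=\chi(y)$; if $0<y\le 1$ then $F(y)\ge 0=\chi(y)$. For the upper bound, after cancelling $F(y)$, the claim reduces to $F(ye^{-N\lambda})\le\chi(y)$: for $y>1$ this is trivial since $F\le 1$, and for $y\le 1$ we have $ye^{u-N\lambda}\le 1$ throughout the support of $f_N$, whence $F(ye^{-N\lambda})=0=\chi(y)$.

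The one mild subtlety lies in justifying the interchange of the contour integral and the $u$-integral, since the inner Perron integral $\frac{1}{2\pi i}\int\frac{z^s}{s}\,ds$ converges only conditionally. The truncate-then-swap-then-pass-to-the-limit procedure sketched above (a slight variant of the computation in \cite{GranvilleSound}) handles this cleanly, and is the only step that requires genuine care.
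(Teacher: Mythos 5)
Your proposal is correct and takes essentially the same route as the paper: both represent $\bigl(\tfrac{e^{\lambda s}-1}{\lambda s}\bigr)^N$ as a Laplace transform of the uniform-sum distribution (the paper keeps the $N$-fold iterated integral $\lambda^{-N}\int_0^\lambda\!\cdots\!\int_0^\lambda e^{(t_1+\cdots+t_N)s}\,dt_1\cdots dt_N$, while you collapse it to $\int_0^{N\lambda}e^{us}f_N(u)\,du$), apply Perron's formula pointwise under the integral, and then verify the two inequalities by the same elementary case check on $y$. Your writeup is a bit more careful on two points that the paper handles implicitly — the explicit justification of the Fubini swap via truncation, and noting that the set where the Perron integral equals $1/2$ is $f_N$-null — but these are refinements of exposition rather than a different idea.
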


\begin{proof}
For any $y>0$ we have 
$$\frac{1}{2\pi i}\int_{c-i\infty}^{c+i\infty} y^s \left(\frac{e^{\lambda s}-1}{\lambda s}\right)^N\frac{ds}{s}
= \frac{1}{\lambda^N}\int_{0}^{\lambda}\cdots \int_0^{\lambda} \frac{1}{2\pi i} \int_{c-i\infty}^{c+i\infty}
\left(ye^{t_1+ \cdots+ t_n}\right)^s\frac{ds}{s} dt_1\cdots dt_N
$$
so that by Perron's formula we obtain
$$ 
\frac{1}{2\pi i}\int_{c-i\infty}^{c+i\infty} y^s \left(\frac{e^{\lambda s}-1}{\lambda s}\right)^N\frac{ds}{s}
= \begin{cases} = 1 & \text{ if } y\geq 1, \\ \in [0,1] & \text{ if } e^{-\lambda N } \leq y < 1,\\
 =0 & \text{ if } 0<y< e^{-\lambda N }. \end{cases}
$$
Therefore we deduce that 
\begin{equation}\label{indicator}
 \frac{1}{2\pi i}\int_{c-i\infty}^{c+i\infty} y^s e^{-\lambda N s} \left(\frac{e^{\lambda s}-1}{\lambda s}\right)^N \frac{ds}{s} \leq \chi(y)\leq \frac{1}{2\pi i}\int_{c-i\infty}^{c+i\infty} y^s \left(\frac{e^{\lambda s}-1}{\lambda s}\right)^N \frac{ds}{s}
\end{equation}
which implies the result.

\end{proof}

\begin{lemma}\label{Decay} Let $s=k+it$ where $k$ is a large positive real number. Then, in the range $|t|\geq k$ we have 

$$\ex\left(|\zeta(\sigma, X)|^s\right)\ll \exp\left(-|t|^{1/\sigma-1}\right) \ex\left(|\zeta(\sigma, X)|^k\right).$$

\end{lemma}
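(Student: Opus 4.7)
The plan is to exploit independence of the $\{X(p)\}_p$ and the Euler-product factorisation
$$\ex\big(|\zeta(\sigma,X)|^{s}\big) \;=\; \prod_p A_p(s), \qquad A_p(s):=\ex\big(|1-X(p)/p^\sigma|^{-s}\big),$$
valid for every complex $s$. Since $\log|1-X(p)/p^\sigma|$ is real, each factor satisfies $|A_p(k+it)|\leq A_p(k)$, so every Euler factor has modulus at most one. It therefore suffices to produce a set $S$ of primes and an absolute constant $c_0<1$ such that $|A_p(k+it)/A_p(k)|\leq c_0$ for all $p\in S$, with $|S|\gg|t|^{1/\sigma-1}$. (The case of $|t|$ bounded is immediate from the trivial factor-by-factor estimate.)

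I would take $S$ to consist of the primes in the window $[D|t|^{1/(2\sigma)},\,|t|^{1/\sigma}/D]$, where $D$ is a large absolute constant; the prime number theorem gives $|S|\gg|t|^{1/\sigma}/\log|t|$, far more than needed. Writing $X(p)=e^{i\theta_p}$ and $f_p(\theta)=-\log|1-e^{i\theta}/p^\sigma|=\cos\theta/p^\sigma+h_p(\theta)$ with $|h_p(\theta)|\ll 1/p^{2\sigma}$, the hypothesis $|t|\geq k$ combined with $p^{2\sigma}\geq D^{2\sigma}|t|$ on $S$ gives $|(k+it)\,h_p(\theta)|\ll D^{-2\sigma}$ uniformly in $\theta$. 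Expanding the outer exponential reduces the Euler factor to
$$A_p(s)\;=\;I_0(s/p^\sigma)\bigl(1+O(D^{-2\sigma})\bigr),$$
with $I_0$ the modified Bessel function of the first kind. Setting $u:=k/p^\sigma$ and $v:=|t|/p^\sigma$, the problem thus reduces to the uniform Bessel estimate
$$\bigl|I_0(u+iv)\bigr|/I_0(u)\;\leq\;c\;<\;1 \qquad\text{for } u\geq 0,\ v\geq D,\ u\leq v,$$
the constraint $u\leq v$ being forced by $k\leq|t|$.

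This uniform Bessel bound is the main obstacle. The starting point is the integral representation
$$\frac{I_0(u+iv)}{I_0(u)}\;=\;\int_0^{\pi} e^{iv\cos\theta}\,d\mu_u(\theta),\qquad d\mu_u(\theta)\;=\;\frac{e^{u\cos\theta}\,d\theta}{\pi\,I_0(u)},$$
which expresses the ratio as the characteristic function at frequency $v$ of $\cos\Theta$ under the probability measure $\mu_u$ on $[0,\pi]$. Strict inequality $|I_0(u+iv)|<I_0(u)$ holds pointwise for every $v\neq 0$. At $u=0$ the ratio is $|J_0(v)|\leq e^{-1/2}$ for $v\geq 2$, the very bound already used in the proof of Lemma~\ref{FourierDecay}; and as $(u,v)\to\infty$ within the region with $u\leq v$, the Laplace asymptotic $I_0(z)\sim e^z/\sqrt{2\pi z}$ yields $|I_0(u+iv)|/I_0(u)\sim\bigl(u^2/(u^2+v^2)\bigr)^{1/4}\leq 2^{-1/4}$. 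A compactness argument combining continuity with these two asymptotic regimes then produces a uniform $c<1$ on $\{u\geq 0,\ v\geq D,\ u\leq v\}$. With this in hand, taking $D$ large absorbs the Taylor error, giving $|A_p(k+it)/A_p(k)|\leq c_0<1$ for every $p\in S$, and
$$\left|\frac{\ex(|\zeta(\sigma,X)|^{k+it})}{\ex(|\zeta(\sigma,X)|^{k})}\right|\leq c_0^{|S|}\;\ll\;\exp\!\bigl(-c_1|t|^{1/\sigma}/\log|t|\bigr)\;\ll\;\exp\!\bigl(-|t|^{1/\sigma-1}\bigr)$$
for $|t|$ sufficiently large, which is the desired bound.
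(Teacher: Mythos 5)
Your strategy — Euler factor by factor, isolate a set of primes where each factor drops by a constant, count the primes — is genuinely different from the paper's and would in fact give a stronger intermediate bound. The paper restricts to the primes $p > y = |t|^{2/\sigma}$, for which $|s|/p^{\sigma} < 1$, so that the elementary expansion $\log I_0(z) = z^2/4 + O(|z|^4)$ applies; since $\tmop{Re}(s^2-k^2) = -t^2$, each such prime contributes a factor $\exp\!\big(-t^2/(4p^{2\sigma}) + O(t/p^{2\sigma} + t^4/p^{4\sigma})\big)$, and summing over $p>y$ with the prime number theorem gives $\exp\!\big(-c(\sigma) t^{2/\sigma-2}/\log t\big)$, which suffices. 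You instead work in the band $p \in [D|t|^{1/(2\sigma)}, |t|^{1/\sigma}/D]$, where the Bessel argument $s/p^\sigma$ is of moderate-to-large modulus, so each factor is uniformly bounded away from $1$; the payoff is the much stronger $\exp(-c_1|t|^{1/\sigma}/\log|t|)$ (and indeed $1/\sigma > 2/\sigma-2$ for $\sigma > 1/2$). The price is that you leave the easy perturbative regime of $I_0$.

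The gap is in the assertion of the uniform bound
$$\sup\Big\{\,|I_0(u+iv)|/I_0(u)\;:\;u\ge 0,\ v\ge D,\ u\le v\,\Big\}\;\le\;c\;<\;1,$$
which you justify by ``a compactness argument combining continuity with these two asymptotic regimes.'' Compactness alone does not apply: the region is noncompact, and the supremum of a function that is pointwise $<1$ over a noncompact set need not be $<1$. To close this you must show that the ratio is bounded away from $1$ near infinity \emph{uniformly}, i.e.\ (i) for $u$ in a bounded set and $v\to\infty$, $\big|\ex_{\mu_u}(e^{iv\cos\Theta})\big|\to 0$ uniformly (a stationary-phase estimate with two nondegenerate critical points of $\cos$); and (ii) for $u\to\infty$ with $u\le v$, $|I_0(u+iv)|/I_0(u)\to (u^2/(u^2+v^2))^{1/4}+o(1)\le 2^{-1/4}+o(1)$ \emph{uniformly}. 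Point (ii) is the subtle one: the classical asymptotic $I_0(z)\sim e^z/\sqrt{2\pi z}$ is uniform only for $|\arg z|\le \pi/2-\delta$, and your region lets $\arg(u+iv)$ approach $\pi/2$ (the regime $v\gg u$, or even $v\gg u^2$ where the Gaussian approximation of the Laplace integral degenerates). A careful argument does work (one can split according to whether $v/u^2$ is small or not, using a uniform Laplace/saddle estimate in the first case and a stationary-phase estimate with a slowly varying amplitude in the second, noting that in the second regime the ratio actually tends to $0$), but this is a nontrivial uniform asymptotic analysis that the paper avoids entirely by staying with small Bessel arguments. As written, the claim is asserted rather than proven, and this is the crux of the whole approach.

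Two minor remarks. In the reduction $A_p(s) = I_0(s/p^\sigma)\big(1+O(D^{-2\sigma})\big)$, what you actually get is $A_p(s) = I_0(s/p^\sigma) + O\big(D^{-2\sigma} I_0(u)\big)$ and $A_p(k) = I_0(u)\big(1+O(D^{-2\sigma})\big)$; dividing gives $A_p(s)/A_p(k) = I_0(s/p^\sigma)/I_0(u) + O(D^{-2\sigma})$, which is what the argument needs, so the conclusion stands, but the multiplicative form you wrote is slightly stronger than what the expansion delivers. Also, the paper takes $t>0$ without loss of generality; you work with $|t|$ directly, which is fine since $I_0$ has real Taylor coefficients so $|I_0(u-iv)| = |I_0(u+iv)|$.
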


\begin{proof}
For simplicity we suppose that $t>0$. First, note that
$$ \ex\left(|\zeta(\sigma, X)|^s\right)= \prod_{p}\ex\left(\left|1-\frac{X(p)}{p^{\sigma}}\right|^{-s}\right).$$
Therefore, for any $y\geq 2$ we have 
\begin{equation}\label{decay1}
\frac{\left|\ex\left(|\zeta(\sigma, X)|^s\right)\right|}{\ex\left(|\zeta(\sigma, X)|^k\right)}\leq \prod_{p>y}\frac{\left|\ex\left(\left|1-\frac{X(p)}{p^{\sigma}}\right|^{-k-it}\right)\right|}{\ex\left(\left|1-\frac{X(p)}{p^{\sigma}}\right|^{-k}\right)}
\end{equation}
Moreover, for $p>|s|^{1/(2\sigma)}$ we have
\begin{equation}\label{Bessel1}
\ex\left(\left|1-\frac{X(p)}{p^{\sigma}}\right|^{-s}\right)= \ex\left(\left(1-2\frac{\text{Re}X(p)}{p^{\sigma}}+ \frac{1}{p^{2\sigma}}\right)^{-s/2}\right)
= I_0\left(\frac{s}{p^{\sigma}}\right)\left(1+O\left(\frac{|s|}{p^{2\sigma}}\right)\right),
\end{equation}
where $I_0(z):=\sum_{n=0}^{\infty} (z/2)^{2n}/n!^2$ is the modified Bessel function of order $0$.
Let $y=t^{2/\sigma}.$ since $I_0(z)=1+z^2/4+ O(|z|^4)$ for $|z|\leq 1$, we deduce that for all primes $p>y$ 
$$ 
\frac{\ex\left(\left|1-\frac{X(p)}{p^{\sigma}}\right|^{-s}\right)}{\ex\left(\left|1-\frac{X(p)}{p^{\sigma}}\right|^{-k}\right)}
= \exp\left(\frac{s^2-k^2}{4p^{2\sigma}}+ O\left(\frac{t}{p^{2\sigma}}+\frac{t^4}{p^{4\sigma}}\right)\right).
$$ 
Since $\text{Re}(s^2-k^2)=-t^2$, it follows from the prime number theorem and equation \eqref{decay1} that
\begin{align*} 
\frac{\left|\ex\left(|\zeta(\sigma, X)|^s\right)\right|}{\ex\left(|\zeta(\sigma, X)|^k\right)}
&\leq \exp\left(-\frac{t^2}{4}\sum_{p>y}\frac{1}{p^{2\sigma}}+ O\left(t\sum_{p>y}\frac{1}{p^{2\sigma}}+t^4\sum_{p>y}\frac{1}{p^{4\sigma}}\right)\right)\\
&\leq \exp\left(-c(\sigma)\frac{t^{2/\sigma-2}}{\log t} + O\left(t^{2/\sigma-3}\right)\right),
\end{align*}
for some constant $c(\sigma)>0$. This implies the result.

\end{proof}

Let $f(u):=\log I_0(u).$ Then, a classical estimate (see for example Lemma 3.1 of \cite{Lamzouriarg}) asserts that $f(u) \asymp u^2$ if $0\leq u\leq 1$ and  $f(u)\asymp u$ if $u\geq 1$. Similarly, we have the following standard estimates
\begin{lemma}\label{Bessel2} We have
\begin{align*} 
 f'(u) &\asymp \begin{cases} u & \text{ if } 0\leq u\leq 1\\ 1 & \text{ if } u\geq 1. \end{cases}\\
 f''(u) &\asymp \begin{cases} 1 & \text{ if } 0\leq u\leq 1\\ u^{-1} & \text{ if } u\geq 1. \end{cases}\\
f'''(u) &\asymp \begin{cases} u & \text{ if } 0\leq u\leq 1\\ u^{-2} & \text{ if } u\geq 1. \end{cases}\\
\end{align*}
\end{lemma}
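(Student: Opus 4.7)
The plan is to reduce all three assertions to the analysis of a single function $g(u) := f'(u) = I_1(u)/I_0(u)$, using the modified Bessel equation $u I_0''(u) + I_0'(u) - u I_0(u) = 0$ together with the identity $I_0'(u) = I_1(u)$. From $I_0''(u)/I_0(u) = 1 - g(u)/u$ and the general identity $(I_0'/I_0)' = I_0''/I_0 - (I_0'/I_0)^2$, one obtains the Riccati equation
\[
f''(u) = g'(u) = 1 - \frac{g(u)}{u} - g(u)^2,
\]
and a further differentiation expresses $f'''(u) = g''(u)$ as a polynomial in $g$, $g'$, and $1/u$. Hence the three claimed $\asymp$-bounds reduce to two-sided estimates on $g(u)$ and its first two derivatives on $(0,1]$ and on $[1,\infty)$.

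For $0 \leq u \leq 1$ I would Taylor expand: $I_0(u) = 1 + u^2/4 + O(u^4)$ and $I_1(u) = u/2 + u^3/16 + O(u^5)$ yield $g(u) = u/2 - u^3/16 + O(u^5)$, and termwise differentiation gives $g'(u) = 1/2 + O(u^2)$ and $g''(u) = -3u/8 + O(u^3)$. All these series have infinite radius of convergence, so the two-sided bounds on $[0,1]$ follow immediately, giving $f'(u) \asymp u$, $f''(u) \asymp 1$, and $|f'''(u)| \asymp u$.

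For $u \geq 1$ I would use the classical uniform asymptotic expansion $I_\nu(u) \sim \frac{e^u}{\sqrt{2\pi u}} \sum_{k\geq 0}(-1)^k a_k(\nu)/u^k$, from which $g(u) = I_1(u)/I_0(u) = 1 - 1/(2u) + O(1/u^2)$, giving $f'(u) \asymp 1$. Plugging this back into the Riccati equation (or differentiating the expansion directly) then produces the large-$u$ behavior of $f''(u) = g'(u)$ and $f'''(u) = g''(u)$. A self-contained alternative is to substitute an ansatz $g(u) = 1 + c_1/u + c_2/u^2 + \cdots$ into the Riccati equation, solve recursively for the $c_j$, and verify that the resulting series is a genuine asymptotic expansion by a bootstrap on the remainder.

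The main obstacle is the large-$u$ regime: one must ensure that the formal matching in the Riccati equation actually yields two-sided $\asymp$-bounds on $g'$ and $g''$ rather than only one-sided upper bounds. Invoking the standard uniform asymptotics of $I_0$ and $I_1$ is the cleanest route; a more hands-on alternative is a sub/supersolution comparison argument for the Riccati equation $g' = 1 - g/u - g^2$ on $[1,\infty)$, using the fact that $g$ is monotonically increasing toward $1$ to pinch the decay rate of $1 - g(u)$ from both sides.
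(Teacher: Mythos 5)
Your reduction of all three bounds to the single Riccati equation $g'(u)=1-g(u)/u-g(u)^{2}$ for $g=f'=I_{1}/I_{0}$ is the natural route; the paper itself offers no proof, merely asserting the lemma as a ``standard estimate.'' The small-$u$ analysis is essentially fine: the Taylor series give the stated orders near $0$, and you should just add that $f''>0$ and $f'''<0$ on $(0,1]$ (for instance because $f''(u)$ is the variance of $\cos\theta$ under the tilted measure proportional to $e^{u\cos\theta}\,d\theta$, hence strictly positive, and similarly $f'''$ is a third central moment), after which continuity and compactness promote the local estimates to two-sided $\asymp$-bounds on all of $[0,1]$.

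The genuine gap is exactly where you write ``plugging this back into the Riccati equation $\dots$ produces the large-$u$ behavior'' without stating what that behavior is --- and if you actually do the computation, it does not match the lemma. From $g(u)=1-\tfrac{1}{2u}+O(u^{-2})$, or equivalently from $f(u)=u-\tfrac12\log(2\pi u)+\tfrac{1}{8u}+O(u^{-2})$ by direct differentiation, one finds
\[
f''(u)=g'(u)=\frac{1}{2u^{2}}+O(u^{-3}),\qquad f'''(u)=g''(u)=-\frac{1}{u^{3}}+O(u^{-4}),
\]
so $f''(u)\asymp u^{-2}$ and $|f'''(u)|\asymp u^{-3}$ for $u\ge 1$, not $u^{-1}$ and $u^{-2}$ as displayed. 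A probabilistic cross-check: under the measure proportional to $e^{u\cos\theta}d\theta$ one has $\theta\approx N(0,1/u)$ by Laplace's method, so $\mathrm{Var}[\cos\theta]\approx\mathrm{Var}[\theta^{2}/2]=1/(2u^{2})$ and the third central moment is $\asymp u^{-3}$. The lemma as printed therefore appears to contain a typo in the $u\ge 1$ entries for $f''$ and $f'''$. This does not break Proposition 8.4, since in $M''(k)\approx\sum_{p}p^{-2\sigma}f''(k/p^{\sigma})$ the primes with $p^{\sigma}\le k$ contribute $\asymp k^{1/\sigma-2}/\log k$ under either exponent (and likewise for $M'''$), but a completed version of your argument must state and prove the corrected decay rates rather than the ones in the statement.
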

Next, we have the following proposition from which we deduce an asymptotic formula for the saddle-point $\kappa$ in terms of $\tau$.
\begin{proposition}\label{saddle1} For large positive real numbers $k$, we have
\begin{equation}\label{Moments1}
M(k)= g_0(\sigma) \frac{k^{1/\sigma}}{\log k}\left(1+O\left(\frac{1}{\log k}\right)\right),
\end{equation}
where $$ g_0(\sigma):=\int_0^{\infty} \frac{f(u)}{u^{1/\sigma+1}}du,$$
and
\begin{equation}\label{Moments2}
M'(k)= g_1(\sigma)\frac{k^{1/\sigma-1}}{\log k}\left(1+O\left(\frac{1}{\log k}\right)\right).
\end{equation}
where 
$$ g_1(\sigma):=\int_0^{\infty} \frac{f'(u)}{u^{1/\sigma}}du.$$ Similarly we have 
$$ M''(k) \asymp_{\sigma} k^{1/\sigma-2}/\log k, \text{ and } M'''(k)\asymp_{\sigma}k^{1/\sigma-3}/\log k.$$
\end{proposition}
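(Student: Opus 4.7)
My plan is to reduce $M(k)$ to a sum $\sum_p F_p(k)$ over primes, where $F_p(k) := \log \ex(|1-X(p)/p^\sigma|^{-k})$, extract the main contribution by using the Bessel approximation from \eqref{Bessel1} on the dominant range of primes, and then evaluate the resulting sum by partial summation against the prime counting function.

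Set the cutoff $Y = k^{1/(2\sigma)}$. For $p > Y$, the estimate \eqref{Bessel1} with $s = k$ gives
\[
F_p(k) = f(k/p^\sigma) + O(k/p^{2\sigma}),
\]
and the total error summed over $p > Y$ is $O(k \cdot Y^{1-2\sigma}/\log Y)$, which is power-saving. For $p \leq Y$ the trivial bound $|1-X(p)/p^\sigma|^{-k} \leq (1-p^{-\sigma})^{-k}$ yields $F_p(k) \ll k/p^\sigma$ uniformly, so the small-prime contribution is $\ll k \cdot Y^{1-\sigma}/\log Y = O(k^{(1+\sigma)/(2\sigma)}/\log k)$, also power-saving compared with the target $k^{1/\sigma}/\log k$ since $\sigma < 1$. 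It therefore suffices to analyse $\sum_{p > Y} f(k/p^\sigma)$.

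By partial summation against $\pi(x)$ with a PNT error term (whose contribution is negligible), this sum equals $\int_Y^\infty f(k/x^\sigma)\,dx/\log x$ up to lower order, and the substitution $u = k/x^\sigma$ gives
\[
\frac{k^{1/\sigma}}{\log k} \int_0^{k^{1/2}} \frac{f(u)}{u^{1/\sigma+1}} \cdot \frac{1}{1 - \log u / \log k}\,du.
\]
The convergence of $g_0(\sigma) = \int_0^\infty f(u)/u^{1/\sigma+1}\,du$ uses exactly the range $1/2 < \sigma < 1$: near zero, Lemma~\ref{Bessel2} gives $f(u) \asymp u^2$ so the integrand is $\asymp u^{1-1/\sigma}$, integrable when $\sigma > 1/2$; at infinity, $f(u) \asymp u$ so the integrand is $\asymp u^{-1/\sigma}$, integrable when $\sigma < 1$. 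Expanding $1/(1-\log u/\log k) = 1 + O(\log u/\log k)$ and extending the truncation to $(0,\infty)$ with negligible error yields \eqref{Moments1} with relative error $O(1/\log k)$, the first-order correction being controlled by the analogous convergent integral with an extra $\log u$ factor.

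The formula \eqref{Moments2} for $M'(k)$ follows by differentiating $F_p(k)$ term-by-term: in the Bessel range one has $F_p'(k) = p^{-\sigma} f'(k/p^\sigma) + O(p^{-2\sigma})$, and the uniform bound $|F_p'(k)| \ll p^{-\sigma}$ (from $-\log|1-X(p)/p^\sigma| \leq \log(1/(1-p^{-\sigma})) \ll p^{-\sigma}$) handles small primes. The same change of variables produces the integral $g_1(\sigma) = \int_0^\infty f'(u)/u^{1/\sigma}\,du$, convergent in the stated range by Lemma~\ref{Bessel2}. The $\asymp$-bounds for $M''(k)$ and $M'''(k)$ follow from two and three further differentiations, producing integrals $\int_0^\infty f^{(n)}(u)\, u^{n-1-1/\sigma}\,du$ for $n = 2, 3$, which are finite and positive in the stated range by Lemma~\ref{Bessel2}. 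The principal technical obstacle is obtaining the error bound uniformly across the transition $p \sim k^{1/(2\sigma)}$ between the Bessel-asymptotic and small-prime regimes; the remainder of the argument is routine real-analytic bookkeeping.
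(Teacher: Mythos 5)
The paper gives no self-contained proof of this proposition: it simply records that \eqref{Moments1} ``follows from Proposition 3.2 of \cite{LamzouriLarge}'' and that the remaining assertions are ``proved along the same lines.'' Your proposal supplies a direct argument, and it is the natural one (presumably essentially what the cited reference does): decompose $M(k) = \sum_p F_p(k)$ with $F_p(k) = \log\ex(|1-X(p)/p^\sigma|^{-k})$, split at $Y = k^{1/(2\sigma)}$, use the Bessel expansion \eqref{Bessel1} for $p > Y$, use the trivial bound $0 \le F_p(k) \ll k p^{-\sigma}$ for $p \le Y$, then evaluate the dominant sum by partial summation against $\pi(x)$ and the substitution $u = k/x^\sigma$. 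All the power-saving estimates check out (small primes contribute $\ll k^{(1+\sigma)/(2\sigma)}/\log k$, the Bessel error $\ll k^{1/(2\sigma)}/\log k$, both $o(k^{1/\sigma}/(\log k)^2)$), and the expansion $1/(1 - \log u/\log k) = 1 + O(|\log u|/\log k)$ produces the claimed $O(1/\log k)$ relative error once one notes, as you do, that the weighted integrals carrying an extra $|\log u|$ also converge. Your convergence discussion for $g_0$ and $g_1$ correctly identifies the roles of $\sigma > 1/2$ at $u=0$ and $\sigma < 1$ at $u=\infty$.

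Two caveats worth addressing in a fully rigorous write-up. First, \eqref{Bessel1} is an $O$-estimate, and to obtain $F_p'(k) = p^{-\sigma} f'(k/p^\sigma) + O(\cdot)$ you cannot literally differentiate that error term; instead you should differentiate the exact identity
$$
\ex\!\left(|1-X(p)/p^\sigma|^{-k}\right) = \frac{1}{2\pi}\int_0^{2\pi} e^{(k/p^\sigma)\cos\theta}\, e^{k r_p(\theta)}\, d\theta, \qquad r_p(\theta) = \sum_{n\ge 2}\frac{\cos(n\theta)}{n p^{n\sigma}},
$$
in $k$ and bound the discrepancy between the tilted measure with and without the $r_p$ factor. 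You flag this (``the principal technical obstacle''), which is the right instinct; the resulting errors remain power-saving. Second, the claim that $\int_0^\infty f'''(u) u^{2-1/\sigma}\,du$ is ``positive'' is not correct: integrating by parts (the boundary terms vanish for $\tfrac12 < \sigma < 1$) gives
$$
\int_0^\infty f'''(u)\,u^{2-1/\sigma}\,du = -\left(2-\tfrac{1}{\sigma}\right)\int_0^\infty f''(u)\, u^{1-1/\sigma}\,du < 0,
$$
so $M'''(k)$ is eventually negative. What matters for the proposition, and for its use in Proposition~\ref{SaddlePoint}, is only that this integral is non-zero and that $|M'''(k)| \asymp_\sigma k^{1/\sigma-3}/\log k$, which does follow. (Incidentally, the stated large-$u$ asymptotics in Lemma~\ref{Bessel2} for $f''$ and $f'''$ do not match the expansion $I_0(u)\sim e^u/\sqrt{2\pi u}$, which gives $f''(u)\sim 1/(2u^2)$ and $f'''(u)\sim -1/u^3$; the integrals still converge in the stated range either way, but this explains the sign slip.)
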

\begin{proof}
The first estimate \eqref{Moments1} follows from Proposition 3.2 of \cite{LamzouriLarge}. The other estimates can be proved along the same lines.
\end{proof}

\begin{corolary}\label{saddle2} Let $\tau$ be a large real number and let $\kappa$ be the solution to $M'(k)=\tau$. Then
$$ \kappa= g_2(\sigma) \tau^{\sigma/(1-\sigma)}(\log\tau)^{\sigma/(1-\sigma)}\left(1+O\left(\frac{\log\log\tau}{\log \tau}\right)\right),$$ where 
$$ g_2(\sigma)= \left(\frac{\sigma}{(1-\sigma)g_1(\sigma)}\right)^{\sigma/(1-\sigma)}.$$
\end{corolary}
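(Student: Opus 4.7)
The plan is to invert the asymptotic formula for $M'(k)$ from Proposition \ref{saddle1} using a bootstrap (fixed-point) argument. Starting from the defining equation $M'(\kappa) = \tau$, Proposition \ref{saddle1} gives
\[
\tau = g_1(\sigma)\frac{\kappa^{1/\sigma-1}}{\log \kappa}\left(1+O\left(\frac{1}{\log\kappa}\right)\right).
\]
Since $1/\sigma - 1 = (1-\sigma)/\sigma$, raising this to the power $\sigma/(1-\sigma)$ and rearranging yields the implicit expression
\begin{equation*}
\kappa = \left(\frac{\tau\,\log\kappa}{g_1(\sigma)}\right)^{\!\sigma/(1-\sigma)}\!\left(1+O\!\left(\frac{1}{\log\kappa}\right)\right).
\end{equation*}

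Next I would handle the $\log\kappa$ appearing on the right. First, from the above display (or from \eqref{Moments2} directly) one sees that $\kappa\to\infty$ as $\tau\to\infty$, and that $\kappa$ grows polynomially in $\tau$; taking logarithms of both sides of the implicit equation gives
\[
\log\kappa = \frac{\sigma}{1-\sigma}\bigl(\log\tau + \log\log\kappa - \log g_1(\sigma)\bigr) + O\!\left(\frac{1}{\log\kappa}\right),
\]
from which one immediately obtains the crude estimate $\log\kappa = \tfrac{\sigma}{1-\sigma}\log\tau + O(\log\log\tau)$, so in particular $\log\kappa \asymp \log\tau$. Re-inserting this into the previous display refines it to
\[
\log\kappa = \frac{\sigma}{1-\sigma}\log\tau\left(1+O\!\left(\frac{\log\log\tau}{\log\tau}\right)\right).
\]

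Finally I would substitute this refined estimate back into the implicit formula for $\kappa$. Since $(1+x)^{\sigma/(1-\sigma)} = 1+O(x)$ for $x$ small, we obtain
\[
\kappa = \left(\frac{\sigma}{(1-\sigma)g_1(\sigma)}\right)^{\!\sigma/(1-\sigma)}\!\tau^{\sigma/(1-\sigma)}(\log\tau)^{\sigma/(1-\sigma)}\left(1+O\!\left(\frac{\log\log\tau}{\log\tau}\right)\right),
\]
which is precisely the claimed formula with the constant $g_2(\sigma)$ as defined. The dominant error term $(\log\log\tau)/\log\tau$ comes from the refinement of $\log\kappa$ in terms of $\log\tau$, which swamps the $1/\log\kappa$ error coming from Proposition \ref{saddle1}.

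The only mildly delicate step is the bootstrap: one must confirm that $\kappa\to\infty$ (which follows from monotonicity of $M'$, guaranteed by the positivity of $M''$ in Proposition \ref{saddle1}) before taking $\log\kappa$ and iterating. Apart from this brief verification, the argument is a routine inversion, so I would expect no genuine obstacle — the proof is essentially a few lines of algebraic manipulation of the asymptotic \eqref{Moments2}.
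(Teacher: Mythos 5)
Your proposal is correct. The paper states this corollary without supplying a proof (treating it as a routine consequence of Proposition \ref{saddle1}), and the bootstrap inversion you outline is precisely the expected argument: one reads off $\log\kappa\asymp\log\tau$ from the crude inversion, refines to $\log\kappa=\tfrac{\sigma}{1-\sigma}\log\tau\bigl(1+O(\log\log\tau/\log\tau)\bigr)$, and substitutes back, the dominant error coming from the $\log\log\kappa$ correction rather than from the $1/\log\kappa$ term in \eqref{Moments2}.
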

Combining Proposition \ref{SaddlePoint}, Proposition \ref{saddle1} and Corollary \ref{saddle2} we recover the following result, which was obtained by the first author in \cite{LamzouriLarge}.
\begin{corolary}\label{EstimateRandom}
 Let $\tfrac12<\sigma<1$. There exists a constant $A(\sigma)>0$ such that uniformly for $\tau\geq 2$ we have
$$
\pr(\log |\zeta(\sigma, X)|>\tau)=\exp\left(-A(\sigma)\tau^{\frac{1}{(1-\sigma)}}(\log\tau)^{\frac{\sigma}{(1-\sigma)}}
\left(1+o(1)\right)\right).
$$
\end{corolary}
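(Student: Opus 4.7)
The plan is to take logarithms in Proposition \ref{SaddlePoint} and substitute the asymptotic expansions from Proposition \ref{saddle1} and Corollary \ref{saddle2}, reducing the problem to an explicit calculation of the leading-order behavior of $M(\kappa)-\tau\kappa$.

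First, taking logarithms in Proposition \ref{SaddlePoint} gives
$$
\log\pr(\log|\zeta(\sigma,X)|>\tau)=M(\kappa)-\tau\kappa-\log\kappa-\tfrac12\log(2\pi M''(\kappa))+O\!\left(\kappa^{1-1/\sigma}\log\kappa\right).
$$
Since by Corollary \ref{saddle2} one has $\kappa\asymp (\tau\log\tau)^{\sigma/(1-\sigma)}$, it follows immediately that $\log\kappa=O(\log\tau)$, and the estimate $M''(\kappa)\asymp\kappa^{1/\sigma-2}/\log\kappa$ from Proposition \ref{saddle1} yields $\log M''(\kappa)=O(\log\tau)$ as well. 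Also $1-1/\sigma<0$, so $\kappa^{1-1/\sigma}\log\kappa=o(1)$. All three error contributions are thus of size $O(\log\tau)$, which is negligible compared to the target main term of order $\tau^{1/(1-\sigma)}(\log\tau)^{\sigma/(1-\sigma)}$.

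Second, I extract the main term. Using $\tau=M'(\kappa)$ and both asymptotics of Proposition \ref{saddle1},
$$
M(\kappa)-\tau\kappa=\bigl(g_0(\sigma)-g_1(\sigma)\bigr)\frac{\kappa^{1/\sigma}}{\log\kappa}\Bigl(1+O\!\left(\tfrac{1}{\log\kappa}\right)\Bigr).
$$
The key algebraic identity is $g_1(\sigma)=g_0(\sigma)/\sigma$, obtained by integration by parts in the definition of $g_1$: the boundary terms $f(u)u^{-1/\sigma}$ vanish at both endpoints since $f(u)\asymp u^2$ near $0$ (so $f(u)u^{-1/\sigma}\asymp u^{2-1/\sigma}\to 0$ as $1/\sigma<2$) and $f(u)\asymp u$ at infinity (so $f(u)u^{-1/\sigma}\asymp u^{1-1/\sigma}\to 0$ as $\sigma<1$). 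Consequently $g_0(\sigma)-g_1(\sigma)=-\frac{1-\sigma}{\sigma}g_0(\sigma)<0$, which is what produces the negative exponential rate.

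Third, I convert from $\kappa$ to $\tau$ using Corollary \ref{saddle2}, which gives
$$
\kappa^{1/\sigma}=g_2(\sigma)^{1/\sigma}\tau^{1/(1-\sigma)}(\log\tau)^{1/(1-\sigma)}(1+o(1)),\qquad \log\kappa=\tfrac{\sigma}{1-\sigma}\log\tau\,(1+o(1)),
$$
so that $\kappa^{1/\sigma}/\log\kappa=\frac{1-\sigma}{\sigma}g_2(\sigma)^{1/\sigma}\tau^{1/(1-\sigma)}(\log\tau)^{\sigma/(1-\sigma)}(1+o(1))$. Combining everything yields
$$
\log\pr(\log|\zeta(\sigma,X)|>\tau)=-A(\sigma)\,\tau^{1/(1-\sigma)}(\log\tau)^{\sigma/(1-\sigma)}(1+o(1)),\qquad A(\sigma)=\tfrac{(1-\sigma)^2}{\sigma^2}g_0(\sigma)g_2(\sigma)^{1/\sigma}.
$$
There is no real obstacle here: the argument is a bookkeeping combination of the three previously established results. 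The only point requiring a touch of care is verifying the integration-by-parts identity $g_1=g_0/\sigma$ (which makes the leading constant positive and explicit) and checking that all subleading terms are swamped by the main order $\tau^{1/(1-\sigma)}(\log\tau)^{\sigma/(1-\sigma)}$.
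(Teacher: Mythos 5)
Your proof is correct and fills in the details that the paper only gestures at: the paper states that the corollary follows by "combining Proposition~\ref{SaddlePoint}, Proposition~\ref{saddle1} and Corollary~\ref{saddle2}" but does not write out the computation, which is exactly what you carry out. One detail worth flagging as a genuine contribution beyond the paper's level of exposition: the integration-by-parts identity $g_1(\sigma)=g_0(\sigma)/\sigma$ (with the boundary-term verification using $f(u)\asymp u^2$ near $0$ and $f(u)\asymp u$ at infinity) is needed to see that $g_0-g_1\neq 0$, without which the claim that the subtraction $M(\kappa)-\tau\kappa$ has relative error $O(1/\log\kappa)$ would not follow from the two separate $(1+O(1/\log\kappa))$ expansions; you correctly identify this as the one nontrivial point and resolve it cleanly, and as a bonus it pins down the constant $A(\sigma)=\tfrac{(1-\sigma)^2}{\sigma^2}g_0(\sigma)g_2(\sigma)^{1/\sigma}$ explicitly.
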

\subsection{Proof of Proposition \ref{SaddlePoint} and Theorem \ref{large thm}}

\begin{proof}[Proof of Proposition \ref{SaddlePoint}]
Let $0<\lambda<1/(2\kappa)$ be a real number to be chosen later. Using Lemma \ref{SmoothPerron} with $N=1$ we obtain
\begin{equation}\label{approximation1}
\begin{aligned}
0&\leq \frac{1}{2\pi i}\int_{\kappa-i\infty}^{\kappa+i\infty}\ex\left(|\zeta(\sigma,X)|^s\right)e^{-\tau s}\frac{e^{\lambda s}-1}{\lambda s}\frac{ds}{s}-\pr(\log |\zeta(\sigma, X)|>\tau)\\
&\leq \frac{1}{2\pi i}\int_{\kappa-i\infty}^{\kappa+i\infty} \ex\left(|\zeta(\sigma,X)|^s\right)e^{-\tau s} \frac{\left(e^{\lambda s}-1\right)}{\lambda s} \frac{\left(1-e^{-\lambda s}\right)}{s}ds.
\end{aligned}
\end{equation}
Since $\lambda\kappa<1/2$ we have
$|e^{\lambda s}-1|\leq 3 \text{ and } |e^{-\lambda s}-1|\leq 2$. 
Therefore, using Lemma \ref{Decay} we obtain
\begin{equation}\label{error12}
 \int_{\kappa-i\infty}^{\kappa-i\kappa}+ \int_{\kappa+i\kappa}^{\kappa+i\infty}\ex\left(|\zeta(\sigma,X)|^s\right)e^{-\tau s}\frac{e^{\lambda s}-1}{\lambda s}\frac{ds}{s} \ll \frac{e^{-\kappa^{1/\sigma-1}}}{\lambda \kappa} \ex\left(|\zeta(\sigma,X)|^{\kappa}\right)e^{-\tau\kappa},
\end{equation}
and similarly
\begin{equation}\label{error2}
\int_{\kappa-i\infty}^{\kappa-i\kappa}+ \int_{\kappa+i\kappa}^{\kappa+i\infty}\ex\left(|\zeta(\sigma,X)|^s\right)e^{-\tau s} \frac{\left(e^{\lambda s}-1\right)}{\lambda s} \frac{\left(1-e^{-\lambda s}\right)}{s}ds \ll \frac{e^{-{\kappa}^{1/\sigma-1}}}{\lambda \kappa} \ex\left(|\zeta(\sigma,X)|^{\kappa}\right)e^{-\tau\kappa}.
\end{equation}
Furthermore, if $|t|\leq \kappa$ then $\left|(1-e^{-\lambda s})(e^{\lambda s}-1)\right|\ll \lambda^2|s|^2$. Hence we derive 
$$
\int_{\kappa-i\kappa}^{\kappa+i\kappa} \ex\left(|\zeta(\sigma,X)|^s\right)e^{-\tau s} \frac{\left(e^{\lambda s}-1\right)}{\lambda s} \frac{\left(1-e^{-\lambda s}\right)}{s}ds \ll \lambda\kappa\ex\left(|\zeta(\sigma,X)|^{\kappa}\right)e^{-\tau \kappa}.
$$ 
Therefore, combining this estimate with equations \eqref{approximation1}, \eqref{error12} and \eqref{error2} we deduce that
\begin{equation}\label{approximation2}
\begin{aligned}
\pr(\log |\zeta(\sigma, X)|>\tau) &- \frac{1}{2\pi i}\int_{\kappa-i\kappa}^{\kappa+i\kappa}\ex\left(|\zeta(\sigma,X)|^s\right)e^{-\tau s}\frac{e^{\lambda s}-1}{\lambda s^2} ds \\
&\ll \left(\lambda\kappa+\frac{e^{-\kappa^{1/\sigma-1}}}{\lambda \kappa}\right)\ex\left(|\zeta(\sigma,X)|^{\kappa}\right)e^{-\tau\kappa}.\\
\end{aligned}
\end{equation}
On the other hand, in the region $|t|\leq \kappa$ we have 
$$ 
\log\ex\left(|\zeta(\sigma, X)|^{\kappa+it}\right)= \log\ex\left(|\zeta(\sigma,X)|^{\kappa}\right)+it M'(\kappa)-\frac{t^2}{2}M''(\kappa)+ O\left(M'''(\kappa)|t|^3\right).
$$
Also, note that
$$ \frac{e^{\lambda s}-1}{\lambda s^2}= \frac{1}{\kappa}\left(1-i\frac{t}{\kappa}+ O\left(\lambda \kappa+\frac{t^2}{\kappa^2}\right)\right).$$
Hence, using that $M'(\kappa)=\tau$ we obtain 
\begin{align*}
&\ex\left(|\zeta(\sigma,X)|^s\right)e^{-\tau s}\frac{e^{\lambda s}-1}{\lambda s^2}\\
= &\frac{1}{\kappa}\ex\left(|\zeta(\sigma,X)|^{\kappa}\right)e^{-\tau\kappa}\exp\left(-\frac{t^2}{2}M''(\kappa)\right) 
\left(1-i\frac{t}{\kappa}+O\left(\lambda\kappa+ \frac{t^2}{\kappa^2}+ M'''(\kappa)|t|^3\right)\right)\\
\end{align*}
Therefore, we obtain
\begin{align*}
&\frac{1}{2\pi i}\int_{\kappa-i\kappa}^{\kappa+i\kappa}\ex\left(|\zeta(\sigma,X)|^s\right)e^{-\tau s}\frac{e^{\lambda s}-1}{\lambda s^2} ds\\
=& \frac{1}{\kappa}\ex\left(|\zeta(\sigma,X)|^{\kappa}\right)e^{-\tau\kappa} \frac{1}{2\pi} \int_{-\kappa}^{\kappa}\exp\left(-\frac{t^2}{2}M''(\kappa)\right)
\left(1+ O\left(\lambda\kappa+ \frac{t^2}{\kappa^2}+ M'''(\kappa)|t|^3\right)\right)dt
\end{align*}
since the integral involving $it/{\kappa}$ vanishes. Further, we have 
$$ 
\frac{1}{2\pi} \int_{-\kappa}^{\kappa}\exp\left(-\frac{t^2}{2}M''(\kappa)\right)dt= \frac{1}{\sqrt{2\pi M''(\kappa)}}\left(1+O\left(\exp\left(-\frac12\kappa^2
M''(\kappa)\right)\right)\right),
$$
and 
$$ \int_{-\kappa}^{\kappa}|t|^n\exp\left(-\frac{t^2}{2}M''(\kappa)\right)dt\ll \frac{1}{M''(\kappa)^{(n+1)/2}}.$$
Thus, using Proposition \ref{saddle1} we deduce that
\begin{equation}\label{main}
\begin{aligned}
&\frac{1}{2\pi i}\int_{\kappa-i\kappa}^{\kappa+i\kappa}\ex\left(|\zeta(\sigma,X)|^s\right)e^{-\tau s}\frac{e^{\lambda s}-1}{\lambda s^2} ds\\
=& \frac{\ex\left(|\zeta(\sigma,X)|^{\kappa}\right)e^{-\tau\kappa}}{\kappa\sqrt{2\pi M''(\kappa)}}
\left(1+ O\left(\lambda\kappa+ \kappa^{1-\frac{1}{\sigma}}\log \kappa\right)\right).
\end{aligned}
\end{equation}
Finally, it follows from Proposition \ref{saddle1} that $\kappa\sqrt{M''(\kappa)}\asymp_{\sigma} \kappa^{1/(2\sigma)}(\log\kappa)^{-1/2}.$ Thus, combining the estimates \eqref{approximation2} and \eqref{main} and choosing $\lambda= \kappa^{-3}$ completes the proof.
\end{proof}

\begin{proof}[Proof of Theorem \ref{large thm}]
As before,  $\kappa$ denotes the unique solution to $M'(k)=\tau$. Let $N$ be a positive integer and  $0<\lambda<\min\{1/(2\kappa), 1/N\}$ be a real number to be chosen later. 

Let $A=10$, $\mathcal{E}(T)$, and  $b_4=b_4(\sigma, 10)$ be as in Theorem \ref{complex theorem}. Let $Y=(b_4(\log T)^{\sigma})/2$. Note that, if $T$ is large enough then by Corollary \ref{saddle2}  we have $\kappa\leq Y$. Let $s$ be  a complex number with $\tmop{Re}(s)=\kappa$ and $|\tmop{Im}(s)|\leq Y$.  Then, it follows from Theorem \ref{complex theorem} that
\begin{equation}\label{asymp complex moments}
\frac{1}{T} \int_{[T,2T]\setminus\mathcal{E}(T)} |\zeta(\sigma+it)|^sdt  
= \ex\left(|\zeta(\sigma,X)|^s\right) +O\left(\frac{\ex\left(|\zeta(\sigma,X)|^{\kappa}\right)}{(\log T)^{10}}\right).
\end{equation}
Define
$$
I(\sigma,\tau)= \frac{1}{2\pi i}\int_{\kappa-i\infty}^{\kappa+i\infty}\ex\left(|\zeta(\sigma,X)|^s\right)e^{-\tau s}\left(\frac{e^{\lambda s}-1}{\lambda s}\right)^N\frac{ds}{s}
$$
and 
$$ 
J_T(\sigma,\tau)= \frac{1}{2\pi i}\int_{\kappa-i\infty}^{\kappa+i\infty}\left(\frac{1}{T} \int_{[T,2T]\setminus\mathcal{E}(T)}|\zeta(\sigma+it)|^s dt\right)e^{-\tau s}\left(\frac{e^{\lambda s}-1}{\lambda s}\right)^N\frac{ds}{s}.
$$
 Then, using equation \eqref{indicator} we obtain
\begin{equation}\label{Mellin1}
 \pr(\log |\zeta(\sigma, X)|>\tau)\leq I(\sigma,\tau)\leq \pr(\log |\zeta(\sigma, X)|>\tau-\lambda N),
\end{equation}
and 
\begin{equation}\label{Mellin2}
 \pr_T\Big(\log|\zeta(\sigma+it)|>\tau\Big)+O\big(\delta(T)\big)\leq J_T(\sigma,\tau)\leq \pr_T\Big(\log|\zeta(\sigma+it)|>\tau-\lambda N\Big)+O\big(\delta(T)\big),
\end{equation}
where 
$$
\delta(T)=\exp\left(-c_0(\sigma)\frac{\log T}{\log\log T}\right),
$$
for some positive constant $c_0(\sigma)$, by equation \eqref{bound measure}. 

Further, using that $|e^{\lambda s}-1|\leq 3$ we obtain 
\begin{equation}\label{tail1}
\int_{\kappa-i\infty}^{\kappa-iY}+ \int_{\kappa+iY}^{\kappa+i\infty}\ex\left(|\zeta(\sigma,X)|^s\right)e^{-\tau s}\left(\frac{e^{\lambda s}-1}{\lambda s}\right)^N\frac{ds}{s}\ll \left(\frac{3}{\lambda Y}\right)^N\ex\left(|\zeta(\sigma,X)|^{\kappa}\right)e^{-\tau \kappa}.
\end{equation}
Similarly, using \eqref{asymp complex moments} we get
\begin{equation}\label{tail2}
\begin{aligned}
\int_{\kappa-i\infty}^{\kappa-iY}+ \int_{\kappa+iY}^{\kappa+i\infty}&\left(\frac{1}{T} \int_{[T,2T]\setminus\mathcal{E}(T)}|\zeta(\sigma+it)|^s dt\right)e^{-\tau s}\left(\frac{e^{\lambda s}-1}{\lambda s}\right)^N\frac{ds}{s}\\
&\ll \left(\frac{3}{\lambda Y}\right)^N\ex\left(|\zeta(\sigma,X)|^{\kappa}\right)e^{-\tau \kappa}.
\end{aligned}
\end{equation}
Further, note that $|(e^{\lambda s}-1)/\lambda s|\leq 3$, which is easily seen by looking at the cases $|\lambda s|\leq 1$ and $|\lambda s|>1.$ Therefore,  combining equations  \eqref{asymp complex moments}, \eqref{tail1} and \eqref{tail2}  we obtain
\begin{equation}\label{difference1}
J_T(\sigma, \tau)- I(\sigma, \tau)\ll \ex\left(|\zeta(\sigma,X)|^{\kappa}\right)e^{-\tau \kappa}\left(\frac{3^N Y}{(\log T)^{10}}+ \left(\frac{3}{\lambda Y}\right)^N\right).
\end{equation}
Furthermore, it follows from Proposition \ref{SaddlePoint} and Proposition \ref{saddle1} that
\begin{equation}\label{order}
 \pr(\log |\zeta(\sigma, X)|>\tau)\asymp_{\sigma} \frac{\sqrt{\log\kappa}}{\kappa^{1/(2\sigma)}}\ex\left(|\zeta(\sigma,X)|^{\kappa}\right)e^{-\tau \kappa}
\asymp_{\sigma} \frac{(\log\tau)^{(\sigma+1)/(2\sigma)}}{\tau^{1/(2(1-\sigma))}}\ex\left(|\zeta(\sigma,X)|^{\kappa}\right)e^{-\tau \kappa}.
\end{equation}
Thus, choosing $N=[\log\log T]$ and $\lambda= e^{10}/Y$ we deduce that 
\begin{equation}\label{difference2}
J_T(\sigma, \tau)- I(\sigma, \tau)\ll \frac{1}{(\log T)^{5}} \pr(\log |\zeta(\sigma, X)|>\tau).
\end{equation}
On the other hand, it follows from Corollary \ref{EstimateRandom} that 
\begin{equation}\label{shift} 
\begin{aligned}
\pr(\log |\zeta(\sigma, X)|>\tau\pm \lambda N)&= \pr(\log |\zeta(\sigma, X)|>\tau)\exp(O(\lambda N(\tau\log\tau)^{\frac{\sigma}{1-\sigma}}))\\
&= \pr(\log |\zeta(\sigma, X)|>\tau)\left(1+O\left(\frac{(\tau\log \tau)^{\frac{\sigma}{1-\sigma}}\log\log T}{(\log T)^{\sigma}}\right)\right).\\
\end{aligned}
\end{equation}
Combining this last estimate with \eqref{Mellin1}, \eqref{Mellin2}, and \eqref{difference2} we obtain
\begin{align*}
\pr_T\Big(\log|\zeta(\sigma+it)|>\tau\Big)&
\leq J_T(\sigma,\tau)+O\big(\delta(T)\big) \\
&\leq I(\sigma,\tau)+ O\left(\frac{\pr(\log |\zeta(\sigma, X)|>\tau)}{(\log T)^{5}}+ \delta(T)\right)\\
&\leq \pr(\log |\zeta(\sigma, X)|>\tau)\left(1+O\left(\frac{(\tau\log \tau)^{\frac{\sigma}{1-\sigma}}\log\log T}{(\log T)^{\sigma}}\right)\right)+ O\big(\delta(T)\big),
\end{align*}
and 
\begin{align*}
\pr_T\Big(\log|\zeta(\sigma+it)|>\tau\Big)&
\geq J_T(\sigma,\tau+\lambda N)+O\big(\delta(T)\big) \\
&\geq I(\sigma,\tau+\lambda N)+ O\left(\frac{\pr(\log |\zeta(\sigma, X)|>\tau)}{(\log T)^{5}}+ \delta(T)\right)\\
&\geq \pr(\log |\zeta(\sigma, X)|>\tau)\left(1+O\left(\frac{(\tau\log \tau)^{\frac{\sigma}{1-\sigma}}\log\log T}{(\log T)^{\sigma}}\right)\right)+ O(\delta(T)).\\
\end{align*}
The result follows from these estimates together with the fact that  $\pr(\log |\zeta(\sigma, X)|>\tau)\gg (\delta(T))^{1/2}$ in our range of $\tau$, by Corollary \ref{Decay}.

\end{proof}

\section{Distribution of $a$-points: Proof of Theorem 1.3}

\subsection{Preliminaries}

To shorten our notation we let $\log_2T=\log\log T$. Let $S(T)$ be the set of points $T \leq t \leq 2T$ such that
 $$\max\left\{\big|\log|\zeta(\sigma+it)|\big|, \big|\arg\zeta(\sigma+it)\big|\right\}<\log_2T \text{ and } \Big|\log|\zeta(\sigma+it)|-\log|a|\Big|>\delta,$$
where $\delta=1/(\log T)^{\sigma}$.
Similarly let $F$ be the event, 
$$ \max\left\{\big|\log|\zeta(\sigma, X)|\big|, \big|\arg\zeta(\sigma,X)\big|\right\}<\log_2T \text{ and } \Big|\log|\zeta(\sigma,X)|-\log|a|\Big|>\delta.
$$
Then we have the following lemma.

\begin{lemma} \label{initial}
Let $\tfrac12<\sigma<1$ be fixed.
We have,
$$
\frac{1}{T}
\int_{T}^{2T} \log |\zeta( \sigma + it) - a| dt = \frac{1}{T} \int_{t \in S(T)}
\log |\zeta(\sigma + it) - a| dt + O \bigg ( \frac{(\log_2 T)^{2}}{(\log T)^{\sigma}} \bigg ).
$$
and
$$
\mathbb E(\log |\zeta(\sigma, X) - a|) = \mathbb{E} ( \mathbf{1}_{F} \cdot \log |\zeta(\sigma,X) - a|)
+ O \bigg ( \frac{(\log_2 T)^2}{(\log T)^{ \sigma}} \bigg ). 
$$
\end{lemma}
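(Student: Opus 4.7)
\smallskip

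The plan is to control the integral (and expectation) over the bad sets $[T,2T]\setminus S(T)$ and $F^c$ by combining a measure estimate with the $L^{2k}$-bound of Proposition~\ref{apoint prop}. I will treat the $\zeta$-side in detail; the random-model statement follows by the same scheme. Decompose $B(T):=[T,2T]\setminus S(T)= B_1(T)\cup B_2(T)\cup B_3(T)$, where $B_1(T)$ captures the points at which $|\log|\zeta(\sigma+it)||\ge \log_2 T$, $B_2(T)$ the points at which $|\arg\zeta(\sigma+it)|\ge \log_2 T$, and $B_3(T)$ the points at which $|\log|\zeta(\sigma+it)|-\log|a||\le\delta$ with $\delta=(\log T)^{-\sigma}$.

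For $B_1(T)$ and $B_2(T)$, Theorem~\ref{large thm} applied with $\tau=\log_2 T$ gives $\textup{meas}(B_1(T)\cup B_2(T))\ll T\exp(-c(\log_2 T)^{1/(1-\sigma)})$, which is far smaller than $T/(\log T)^\sigma$. For $B_3(T)$, Theorem~\ref{discrepancy thm} yields
\[
\frac{\textup{meas}(B_3(T))}{T}\le \mathbb{P}\big(|\log|\zeta(\sigma,X)|-\log|a||\le\delta\big)+O\big((\log T)^{-\sigma}\big),
\]
and the remaining probability is $O(\delta)=O((\log T)^{-\sigma})$ because $\log|\zeta(\sigma,X)|$ possesses a bounded density on $\mathbb{R}$ (a classical consequence of the Jessen--Wintner / Borchsenius--Jessen analysis for $\sigma>\tfrac12$). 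Hence $\textup{meas}(B(T))/T\ll (\log T)^{-\sigma}$.

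With this measure bound in hand, apply H\"older's inequality with exponents $2k$ and $2k/(2k-1)$, followed by Proposition~\ref{apoint prop}:
\[
\frac{1}{T}\bigg|\int_{B(T)}\log|\zeta(\sigma+it)-a|\,dt\bigg|\le \bigg(\frac{\textup{meas}(B(T))}{T}\bigg)^{1-\frac{1}{2k}}\bigg(\frac{1}{T}\int_T^{2T}|\log|\zeta(\sigma+it)-a||^{2k}dt\bigg)^{\frac{1}{2k}}\ll (Ck)^2\bigg(\frac{1}{(\log T)^{\sigma}}\bigg)^{1-\frac{1}{2k}}.
\]
Choosing $k=\log_2 T$ makes $(\log T)^{\sigma/(2k)}=e^{\sigma/2}=O(1)$ and $(Ck)^2\ll (\log_2 T)^2$, producing the target error $O((\log_2 T)^2/(\log T)^\sigma)$. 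For the random-model assertion, I would run the same argument: $\mathbb{P}(F^c)\ll (\log T)^{-\sigma}$ by the large-deviation analogue for $\log|\zeta(\sigma,X)|$ and $\arg\zeta(\sigma,X)$ together with the density bound, and the corresponding $L^{2k}$ estimate $\mathbb{E}(|\log|\zeta(\sigma,X)-a||^{2k})\ll (Ck)^{4k}$ follows from boundedness of the joint density of $(\textup{Re}\,\zeta(\sigma,X),\textup{Im}\,\zeta(\sigma,X))$ near $a$, which gives $\mathbb{P}(|\zeta(\sigma,X)-a|<r)\ll r^2$.

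The main subtlety is the appeal to the classical density/regularity of $\zeta(\sigma,X)$: local boundedness of the density of $\log|\zeta(\sigma,X)|$ near $\log|a|$ is essential to beat the otherwise trivial estimate $\textup{meas}(B_3(T))\ll T$, and similarly on the random side. Once this ingredient is in place, everything else is a routine H\"older argument driven by Theorems~\ref{discrepancy thm}--\ref{large thm} and Proposition~\ref{apoint prop}.
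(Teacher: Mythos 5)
Your argument is correct and follows the paper's proof: H\"older's inequality with exponent $2k$, Proposition~\ref{apoint prop} for the $L^{2k}$-bound $\ll (Ck)^2$, the discrepancy bound of Theorem~\ref{discrepancy thm} plus absolute continuity of $\log\zeta(\sigma,X)$ to control $\textup{meas}([T,2T]\setminus S(T))/T\ll (\log T)^{-\sigma}$, and the choice $k=\log_2 T$. The one small wrinkle in your exposition is citing Theorem~\ref{large thm} to bound $B_1(T)$, since that theorem only treats the upper tail $\log|\zeta(\sigma+it)|>\tau$; the paper sidesteps this by instead applying Theorem~\ref{discrepancy thm} to all three pieces of the complement at once (as you already do for $B_3$), reducing each to the corresponding random-model probability, which is then negligible by the rapid decay of the Jessen--Wintner density for $\log\zeta(\sigma,X)$.
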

\begin{proof}
Note
\begin{align*}
& \bigg |
\frac{1}{T}
\int_{t \notin S(T)} \log |\zeta(\sigma + it) - a| dt \bigg | \leq \\ & \leq \bigg ( \frac{1}{T} \cdot \text{meas} \{ T \leq t \leq 2T: t \notin S(T) \}  \bigg )^{1 - 1/2k} \cdot \bigg ( \frac{1}{T} \int_{T}^{2T} | \log |\zeta(\sigma + it) - a||^{2k} \, dt \bigg )^{1/2k}.
\end{align*}
According to Proposition \ref{apoint prop}, 
$$ \left(\frac{1}{T}\int_T^{2T}(\log|\zeta(\sigma+it)-a|)^{2k}dt\right)^{1/2k} \ll k^2$$ 
while by Theorem \ref{discrepancy thm} we have 
$$
\text{meas} \{ T \leq t \leq 2T: t \notin S(T) \} \ll \mathbb{P} \big ( | \log |\zeta(\sigma, X)| - \log |a| | < \delta\big ) + O((\log T)^{-\sigma}).
$$
The probability distribution $\mathbb{P} ( \log \zeta(\sigma, X) \in \cdot)$ is absolutely continuous, and therefore
the above expression is $\ll \delta + (\log T)^{-\sigma} \ll(\log T)^{-\sigma}  $ . 
Choosing $k = \log_2 T$ leads to the desired estimate
$$
\bigg |
\int_{t \notin S(T)} \log |\zeta(\sigma + it) - a| dt \bigg | \ll \frac{(\log_2 T)^{2}}{(\log T)^{\sigma}} 
$$
and hence the claim. The proof of the second statement is similar.
\end{proof}

We let $S_1(T)$ be the set of points $t\in S(T)$ such that $\log|\zeta(\sigma+it)|>\log|a|+\delta$, and 
$S_2(T)=S(T)\setminus S_1(T)$. Similarly, $F_1$ is the sub-event of $F$ where $\log|\zeta(\sigma,X)|>\log|a|+\delta$ and $F_2=F\setminus F_1.$ Moreover, we define
\begin{align*}
\Phi_1(u,v)&= \frac{1}{T} \text{meas}\{t\in S_1(T) : \log|\zeta(\sigma+it)|\leq u \text{ and } \arg\zeta(\sigma+it)\leq v\}\\
\tilde{\Phi}_1(u,v)&= \mathbb{P}\big(F_1 \text{ and } \log |\zeta(\sigma, X)|\leq u \text{ and } \arg\zeta(\sigma, X)\leq v\big).
\end{align*} 
Also, let
\begin{align*}
\Psi(u)&= \frac{1}{T} \text{meas}\{t\in S_1(T) : \log|\zeta(\sigma+it)|\leq u\}\\
\tilde{\Psi}(u)&= \mathbb{P}\big(F_1 \text{ and } \log|\zeta(\sigma, X)|\leq u\big).
\end{align*}

Let $g(u,v):= \log(e^{u+iv}-a)$ and $h(u,v):= \tmop{Re} (g(u,v))$. Note that $h$ is twice differentiable in the region of $\mathbb{R}^2$ where $\big| u-\log|a|\big|>\delta$. 

We are now going to show that 
$$
\int_{t \in S(T)} \log |\zeta(\sigma + it) - a| dt \text{ and }
\mathbb{E}[\mathbf{1}_{F} \cdot \log |\zeta(\sigma, X) - a| ]
$$
match up to a small error term. For this we will need to integrate by parts. We establish the three necessary lemmas below. 
\begin{lemma}\label{IntParts} We have
\begin{align*}
 &\frac{1}{T}\int_{t\in S_1(T) }\log|\zeta(\sigma+it)-a|dt\\
=& \int_{-\log_2T}^{\log_2T}\int_{\log|a|+\delta}^{\log_2 T} \Phi_1(u,v)
\frac{\partial^2 h(u,v)}{\partial u\partial v}dudv- \frac{\textup{meas}(S_1(T))}{T} h(\log_2T,\log_2T)\\
& + \frac{1}{T}\int_{t\in S_1(T) }\Big(h\big(\log_2T, \arg\zeta(\sigma+it)\big)+h\big(\log|\zeta(\sigma+it)|,\log_2T\big)\Big)dt,\\
\end{align*} 
and 
\begin{align*}
 &\ex\big(\mathbf{1}_{F_1}\cdot\log|\zeta(\sigma, X)-a|\big)\\
=& \int_{-\log_2T}^{\log_2T}\int_{\log|a|+\delta}^{\log_2 T} \tilde{\Phi}_1(u,v)
\frac{\partial^2 h(u,v)}{\partial u\partial v}dudv- \pr(F_1)h(\log_2T,\log_2T)\\
& + \ex\Big(\mathbf{1}_{F_1}\cdot h\big(\log_2T, \arg\zeta(\sigma,X)\big)\Big) + \ex\Big(\mathbf{1}_{F_1}\cdot h\big(\log|\zeta(\sigma,X)|,\log_2T\big)\Big).\\
\end{align*} 

\end{lemma}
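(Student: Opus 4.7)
The idea is to recognize the integrals on the left-hand side as Stieltjes integrals of the smooth function $h(u,v)=\operatorname{Re}\log(e^{u+iv}-a)$ against a bivariate distribution function, and then perform two-dimensional integration by parts. Because $S_1(T)$ excludes the zone $|{\log|\zeta|-\log|a|}|\le \delta$, the pole of $\log(e^{u+iv}-a)$ at $u+iv \equiv \log a$ is avoided, so on the relevant domain $(u,v)\in[\log|a|+\delta,\log_2 T]\times[-\log_2 T,\log_2 T]$ the function $h$ is smooth and the mixed partial $\partial^2 h/(\partial u \partial v)$ is continuous. Using the identity $\log|\zeta(\sigma+it)-a|=h(\log|\zeta(\sigma+it)|,\arg\zeta(\sigma+it))$, one rewrites
\[
\frac{1}{T}\int_{t\in S_1(T)}\log|\zeta(\sigma+it)-a|\,dt
=\int_{-\log_2 T}^{\log_2 T}\int_{\log|a|+\delta}^{\log_2 T} h(u,v)\, d_u d_v \Phi_1(u,v).
\]

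Next, I would carry out integration by parts first in the variable $u$ and then in $v$ (the order does not matter). Two key vanishings simplify the boundary terms: by construction $\Phi_1(\log|a|+\delta, v)=0$ for every $v$ (since on $S_1(T)$ we have $\log|\zeta|>\log|a|+\delta$), and $\Phi_1(u,-\log_2 T)=0$ (since $\arg\zeta > -\log_2 T$ on $S_1(T)$). After the first integration by parts in $u$, the sole surviving boundary contribution is the term at $u=\log_2 T$, namely $\int_{-\log_2 T}^{\log_2 T} h(\log_2 T, v)\,\partial_v\Phi_1(\log_2 T, v)\,dv$, plus a double integral $-\int\int (\partial h/\partial u)\,\partial_v \Phi_1 \,du\,dv$. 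A second integration by parts in $v$ applied to each of these two pieces again sees the vanishing of $\Phi_1$ at $v=-\log_2 T$, producing: (i) the double integral $\int\int \Phi_1 \,\partial^2 h/(\partial u\,\partial v)\,du\,dv$; (ii) a boundary term at $v=\log_2 T, u=\log_2 T$ contributing $h(\log_2 T,\log_2 T)\,\Phi_1(\log_2 T,\log_2 T)$ with multiplicity, where $\Phi_1(\log_2 T,\log_2 T)=\operatorname{meas}(S_1(T))/T$; and (iii) boundary integrals along the edges $u=\log_2 T$ and $v=\log_2 T$.

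The last observation, which repackages (iii) into the form stated in the lemma, is that these edge integrals are themselves one-dimensional Stieltjes integrals against the marginal distributions of $\arg\zeta(\sigma+it)$ and $\log|\zeta(\sigma+it)|$ on $S_1(T)$. Concretely, $d_v\Phi_1(\log_2 T,v)$ is the marginal law of $\arg\zeta$ on $S_1(T)$, and $d_u\Phi_1(u,\log_2 T)$ is the marginal law of $\log|\zeta|$ on $S_1(T)$, so pairing them against $h(\log_2 T,v)$ and $h(u,\log_2 T)$ respectively gives exactly $\frac{1}{T}\int_{t\in S_1(T)} h(\log_2 T,\arg\zeta(\sigma+it))\,dt$ and $\frac{1}{T}\int_{t\in S_1(T)} h(\log|\zeta(\sigma+it)|,\log_2 T)\,dt$. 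Collecting signs (the two edge boundary terms enter with plus signs and the corner term with a minus sign, after the two iterated integrations by parts) yields precisely the first identity in the lemma.

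The proof of the second identity for the random model is structurally identical: one replaces $\frac{1}{T}\int_{t\in S_1(T)}$ by $\mathbb{E}[\mathbf{1}_{F_1}\cdot]$ and $\Phi_1$ by $\tilde{\Phi}_1$, using that $\tilde{\Phi}_1(\log|a|+\delta,v)=0$ and $\tilde{\Phi}_1(u,-\log_2 T)=0$ on the event $F_1$. The main obstacle to be careful with is the bookkeeping of the boundary terms: one must verify that the corner contribution at $u=v=\log_2 T$ appears with the correct sign and that the two edges produce the symmetric sum $h(\log_2 T,\arg\zeta)+h(\log|\zeta|,\log_2 T)$ stated in the lemma, rather than e.g.\ some asymmetric variant. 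Apart from this bookkeeping, everything is a direct application of Fubini together with one-variable integration by parts against a function of bounded variation, which is justified since $\Phi_1$ is a right-continuous non-decreasing function in each argument and $h$ is $C^2$ on the compact domain in question.
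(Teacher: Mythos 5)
Your proposal is correct and is essentially the paper's own argument run in the reverse direction: the paper starts from the double integral $\int\!\!\int \Phi_1\,\partial^2 h/(\partial u\,\partial v)$, unpacks $\Phi_1$ by Fubini, and applies the fundamental theorem of calculus twice to produce the four-term identity, whereas you start from $\frac1T\int_{S_1(T)}\log|\zeta-a|$, write it as a Stieltjes integral against $d_u d_v\Phi_1$, and do two-dimensional integration by parts. The two routes are the same computation with the same vanishing boundary conditions at $u=\log|a|+\delta$ and $v=-\log_2 T$, so the sign/boundary bookkeeping you flag as the main hazard is exactly what the paper's Fubini-first order of operations sidesteps.
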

\begin{proof}
We only prove the first identity since the second can be obtained along similar lines. We have
\begin{align*}
&\int_{-\log_2T}^{\log_2T}\int_{\log|a|+\delta}^{\log_2 T} \Phi_1(u,v)
\frac{\partial^2 h(u,v)}{\partial u\partial v}dudv\\
=& \frac{1}{T} \int_{-\log_2T}^{\log_2T}\int_{\log|a|+\delta}^{\log_2 T} \int_{\substack{t\in S_1(T)\\ \log|\zeta(\sigma+it)|\leq u\\ \arg\zeta(\sigma+it)\leq v} }
\frac{\partial^2 h(u,v)}{\partial u\partial v}dtdudv\\
=& \frac{1}{T} \int_{t\in S_1(T) }
\int_{\arg\zeta(\sigma+it)}^{\log_2T}\int_{\log|\zeta(\sigma+it)|}^{\log_2 T} 
\frac{\partial^2 h(u,v)}{\partial u\partial v}dudvdt\\
=&\frac{1}{T} \int_{t\in S_1(T) }
\int_{\arg\zeta(\sigma+it)}^{\log_2T} 
\frac{\partial }{\partial v}h\big(\log_2T,v\big)-
\frac{\partial}{\partial v}h\big(\log|\zeta(\sigma+it)|,v\big)dvdt\\
=& \frac{1}{T}\int_{t\in S_1(T) }\log|\zeta(\sigma+it)-a|dt + \frac{1}{T} \int_{t\in S_1(T) } h\big(\log_2T,\log _2T\big)dt \\
&-\frac{1}{T}\int_{t\in S_1(T) }\Big(h\big(\log_2T, \arg\zeta(\sigma+it)\big)+h\big(\log|\zeta(\sigma+it)|,\log_2T\big)\Big)dt.
\end{align*}
\end{proof}
\begin{lemma}\label{ParInt1} 
Let $\tfrac12<\sigma<1$ be fixed.
We have 
$$\frac{1}{T}\int_{t\in S_1(T) }h\big(\log|\zeta(\sigma+it)|,\log_2T\big)dt=\ex\Big(\mathbf{1}_{F_1} \cdot h\big(\log|\zeta(\sigma, X)|, \log_2T\big)\Big)+O\left(\frac{\log_2T}{(\log T)^{\sigma}}\right),$$
and 
$$ \frac{1}{T}\int_{t\in S_1(T) }h\big(\log_2T, \arg\zeta(\sigma+it)\big)dt = \ex\Big(\mathbf{1}_{F_1} \cdot h\big(\log_2T, \arg\zeta(\sigma, X)\big)\Big) +O\left(\frac{\log_2T}{(\log T)^{\sigma}}\right).$$
\end{lemma}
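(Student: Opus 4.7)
The plan is to reduce each identity to a one-dimensional integration-by-parts against the distribution functions for which Theorem \ref{discrepancy thm} provides control. For the first identity, the integrand $h(\log|\zeta(\sigma+it)|,\log_2T)$ depends on $t$ only through $\log|\zeta(\sigma+it)|$, so with
\[
\Psi(u)=\frac1T\tmop{meas}\{t\in S_1(T):\log|\zeta(\sigma+it)|\leq u\},
\qquad
\tilde\Psi(u)=\pr\big(F_1\text{ and }\log|\zeta(\sigma,X)|\leq u\big),
\]
I can write
\[
\frac1T\int_{t\in S_1(T)}h(\log|\zeta(\sigma+it)|,\log_2T)\,dt=\int_{\log|a|+\delta}^{\log_2T}h(u,\log_2T)\,d\Psi(u),
\]
and analogously for the $\mathbb E[\mathbf1_{F_1}\cdots]$ side using $\tilde\Psi$. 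Both $\Psi(u)$ and $\tilde\Psi(u)$ are measures of axis-parallel rectangles (namely $\log|a|+\delta<\log|\zeta|\leq u$, $-\log_2T<\log|\zeta|$, $|\arg\zeta|<\log_2T$), so Theorem \ref{discrepancy thm} gives $|\Psi(u)-\tilde\Psi(u)|\ll (\log T)^{-\sigma}$ uniformly in $u$.

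Integrating by parts (the boundary term at $u=\log|a|+\delta$ vanishes since $\Psi(\log|a|+\delta)=\tilde\Psi(\log|a|+\delta)=0$), the difference becomes
\[
h(\log_2T,\log_2T)\bigl[\Psi(\log_2T)-\tilde\Psi(\log_2T)\bigr]-\int_{\log|a|+\delta}^{\log_2T}\bigl[\Psi(u)-\tilde\Psi(u)\bigr]\frac{\partial h(u,\log_2T)}{\partial u}\,du.
\]
Since $|h(\log_2T,\log_2T)|\ll\log_2T$, the boundary contribution is $\ll\log_2T/(\log T)^{\sigma}$. For the integral, a direct computation gives
\[
\frac{\partial h(u,v)}{\partial u}=\tmop{Re}\!\Big(\frac{e^{u+iv}}{e^{u+iv}-a}\Big),
\qquad
\Big|\frac{\partial h(u,\log_2T)}{\partial u}\Big|\leq\frac{e^u}{e^u-|a|}
\]
for $u>\log|a|+\delta$, so antidifferentiating yields
\[
\int_{\log|a|+\delta}^{\log_2T}\Big|\frac{\partial h(u,\log_2T)}{\partial u}\Big|\,du\leq\log(e^{\log_2T}-|a|)-\log(|a|(e^\delta-1))\ll\log_2T,
\]
using $\log(1/\delta)=\sigma\log_2T$. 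Multiplied by the discrepancy $(\log T)^{-\sigma}$ this gives the claimed error.

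For the second identity I will run the same argument but integrate by parts in $v=\arg\zeta$ against the one-dimensional distribution
$\Psi_2(v)=\frac1T\tmop{meas}\{t\in S_1(T):\arg\zeta(\sigma+it)\leq v\}$ and its random counterpart $\tilde\Psi_2(v)$, which differ by $\ll (\log T)^{-\sigma}$ since the defining set is again an axis-parallel rectangle. The boundary terms at $v=\pm\log_2T$ are handled as before (the lower one vanishes because $|\arg\zeta|<\log_2T$ in $S_1(T)\cup F_1$), and for the density one notes that at $u=\log_2T$ one has $|a|e^{-u}=|a|/\log T=o(1)$, whence
\[
\Big|\frac{\partial h(\log_2T,v)}{\partial v}\Big|=\Big|\tmop{Im}\!\Big(\frac{e^{\log_2T+iv}}{e^{\log_2T+iv}-a}\Big)\Big|\leq\frac{1}{1-|a|/\log T}\leq 2
\]
for $T$ large. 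Integrating this bound over $v\in[-\log_2T,\log_2T]$ gives $\ll\log_2T$, and combining with the discrepancy bound produces the same $O(\log_2T/(\log T)^{\sigma})$ estimate.

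The only real point of care is to verify that the sets $S_1(T)$ and $F_1$ are described by rectangles in the $(\log|\zeta|,\arg\zeta)$ plane so that Theorem \ref{discrepancy thm} applies directly to the one-dimensional distribution functions $\Psi,\tilde\Psi$ (and $\Psi_2,\tilde\Psi_2$); after that, all that is needed is the antidifferentiation of $e^u/(e^u-|a|)$, which is what forces the parameter $\delta=(\log T)^{-\sigma}$ to be coordinated with the discrepancy bound so that the resulting $\log(1/\delta)\ll\log_2T$ loss is absorbable.
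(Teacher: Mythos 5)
Your proof is correct and follows essentially the same route as the paper: rewrite each side as a Stieltjes integral against a one-dimensional distribution function, integrate by parts, invoke Theorem~\ref{discrepancy thm} (after noting the relevant level sets are axis-parallel rectangles in the $(\log|\zeta|,\arg\zeta)$ plane), and then bound $\int|\partial_u h|\,du\ll\log_2 T$ by antidifferentiating $e^u/(e^u-|a|)$, which is equivalent to the paper's change of variable $x=u-\log|a|$. The only cosmetic difference is that you observe the lower boundary term vanishes outright, whereas the paper simply bounds it by $\log(1/\delta)\ll\log_2 T$; both are fine.
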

\begin{proof}
 We only prove the first estimate since the second is similar. We have  
\begin{align*}
 \frac{1}{T}\int_{t\in S_1(T) }h\big(\log|\zeta(\sigma+it)|,\log_2T\big)dt
= \int_{\log|a|+\delta}^{\log_2T} h(u,\log_2T)d\Psi(u).
\end{align*}
Integrating by parts, the right-hand side equals
\begin{align*}
 &\Big[\Psi(u)h(u,\log_2T)\Big]_{\log|a|+\delta}^{\log_2T}- \int_{\log|a|+\delta}^{\log_2T}h'(u,\log_2T)\Psi(u)du\\
& \qquad \qquad \qquad \qquad = \Big[\tilde{\Psi}(u)h(u,\log_2T)\Big]_{\log|a|+\delta}^{\log_2T}- \int_{\log|a|+\delta}^{\log_2T}h'(u,\log_2T)\tilde{\Psi}(u)du + E_5\\
& \qquad \qquad \qquad \qquad = \ex\Big(\mathbf{1}_{F_1} \cdot h\big(\log|\zeta(\sigma, X)|, \log_2T\big)\Big)+E_5
\end{align*}
where 
\begin{equation}\label{error13}
 E_5\ll \frac{1}{(\log T)^{\sigma}}\left(\log_2T+ \int_{\log|a|+\delta}^{\log_2T}|h'(u,\log_2T)|du\right),
\end{equation}
which follows from the discrepancy estimate $ \Psi(u)-\tilde{\Psi}(u)\ll (\log T)^{-\sigma}$, along with the bounds
$h(\log_2T,\log_2T)\ll \log_2T$ and $h(\log|a|+\delta, \log_2T)\ll \log(1/\delta)\ll \log_2T.$
Now, we have 
$$ |h'(u,\log_2T)|= |\tmop{Re} (g'(u, \log_2T))|\leq |g'(u,\log_2T)|\leq \frac{e^u}{|e^{u}-|a||}.$$
Further, by making the change of variable $x= u-\log|a|$, we get
$$
\int_{\log|a|+\delta}^{\log_2T}|h'(u,\log_2T)|du \ll \int_{\delta}^{2\log_2T}\frac{e^x}{e^x-1}dx
\ll \int_{\delta}^{1}\frac{dx}{x} + \log_2T \ll \log_2 T.
$$
Inserting this estimate in \eqref{error13} completes the proof.
\end{proof}

\bigskip

\begin{lemma}\label{ParInt2} Let $\tfrac12<\sigma<1$ be fixed. We have
\begin{align*}
\int_{-\log_2T}^{\log_2T}\int_{\log|a|+\delta}^{\log_2 T} \Phi_1(u,v)
\frac{\partial^2 h(u,v)}{\partial u\partial v}dudv=& 
\int_{-\log_2T}^{\log_2T}\int_{\log|a|+\delta}^{\log_2 T} \tilde{\Phi}_1(u,v)
\frac{\partial^2 h(u,v)}{\partial u\partial v}dudv \\
&+O \left(\frac{(\log_2 T)^2}{(\log T)^{\sigma}}\right).
\end{align*}

\end{lemma}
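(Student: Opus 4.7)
The plan is to rewrite the difference of the two integrals as
$$\int_{-\log_2T}^{\log_2T}\int_{\log|a|+\delta}^{\log_2 T}\bigl(\Phi_1(u,v)-\tilde\Phi_1(u,v)\bigr)\frac{\partial^2 h(u,v)}{\partial u\partial v}\,du\,dv,$$
and to estimate it by the product of (i) a uniform discrepancy bound on $|\Phi_1-\tilde\Phi_1|$ and (ii) the $L^1$ norm of the singular weight $\partial^2 h/\partial u\partial v$ on the rectangle of integration.

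For (i) I would observe that both $\Phi_1(u,v)$ and $\tilde\Phi_1(u,v)$ are simply the probability that $\log\zeta(\sigma+it)$, respectively $\log\zeta(\sigma,X)$, lies in the axis-parallel rectangle
$$\bigl(\log|a|+\delta,\min(u,\log_2 T)\bigr)\times\bigl(-\log_2 T,\min(v,\log_2 T)\bigr).$$
Theorem \ref{discrepancy thm} then yields $|\Phi_1(u,v)-\tilde\Phi_1(u,v)|\ll(\log T)^{-\sigma}$ uniformly on the integration domain.

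For (ii), differentiating $g(u,v)=\log(e^{u+iv}-a)$ gives $\partial_u\partial_v g=-iae^{u+iv}/(e^{u+iv}-a)^2$, hence
$$\left|\frac{\partial^2 h(u,v)}{\partial u\partial v}\right|\le\frac{|a|\,e^u}{|e^{u+iv}-a|^2}.$$
Setting $r=e^u$ and $\phi=v-\arg a$, one has $|e^{u+iv}-a|^2=(r-|a|)^2+4r|a|\sin^2(\phi/2)$. I would split the $u$-integral at $u=\log(2|a|)$: on the upper range $|e^{u+iv}-a|\ge r/2$ forces the integrand to be $\ll e^{-u}$, contributing $\ll\log_2 T$. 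On the lower range, the substitution $r=|a|(1+\rho)$ with $\rho$ lying (approximately) in $[\delta,1]$ yields an integrand $\ll(\rho^2+\sin^2(\phi/2))^{-1}$. The function $\sin(\phi/2)$ has $O(\log_2 T)$ zeros in $|\phi|\le\log_2 T+|\arg a|$, and near each zero a direct computation (or polar change of variables) gives
$$\int_\delta^1\int_{-\pi}^\pi\frac{d\phi\,d\rho}{\rho^2+\phi^2}\ll\int_\delta^1\frac{d\rho}{\rho}\ll\log(1/\delta)\ll\log_2 T.$$
Summing over the $O(\log_2 T)$ singularities shows $\iint|\partial^2 h/\partial u\partial v|\,du\,dv\ll(\log_2 T)^2$, and combining with (i) finishes the proof.

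The main obstacle is the $L^1$ estimate of the mixed partial: the weight $\partial^2 h/\partial u\partial v$ has a non-integrable singularity on the curves $e^{u+iv}=a$, and one has to exploit the cutoff $u>\log|a|+\delta$ (with $\delta=(\log T)^{-\sigma}$) to keep the $\rho$-integral convergent. This cutoff is exactly what produces the factor $\log(1/\delta)\asymp\log_2 T$, which, together with the $O(\log_2 T)$ length of the $v$-range containing $O(\log_2 T)$ singularities, accounts for the $(\log_2 T)^2$ loss in the error term.
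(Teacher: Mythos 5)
Your proof is correct and follows essentially the same approach as the paper: factor the difference through the uniform discrepancy bound $|\Phi_1(u,v)-\tilde{\Phi}_1(u,v)|\ll(\log T)^{-\sigma}$ (valid because the sets defining $\Phi_1,\tilde{\Phi}_1$ are axis-parallel rectangles, so Theorem \ref{discrepancy thm} applies), and then estimate the $L^1$ norm of $\partial^2 h/\partial u\,\partial v$ by combining the $O(\log_2 T)$ angular singularity zones coming from $2\pi$-periodicity with the $\log(1/\delta)\asymp\log_2 T$ coming from the cutoff $u>\log|a|+\delta$. The only difference is bookkeeping: the paper keeps the full $u$-range and handles it with a single change of variables in the $v$-integral, whereas you split at $u=\log(2|a|)$ and substitute $r=|a|(1+\rho)$; both yield the same $(\log_2 T)^2$ bound.
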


\begin{proof}
By the discrepancy estimate 
$ \Phi_1(u,v)-\tilde{\Phi}_1(u,v)\ll (\log T)^{-\sigma}$, we obtain that 
\begin{align*}
\int_{-\log_2T}^{\log_2T}\int_{\log|a|+\delta}^{\log_2 T} \Phi_1(u,v)
\frac{\partial^2 h(u,v)}{\partial u\partial v}dudv
=&\int_{-\log_2T}^{\log_2T}\int_{\log|a|+\delta}^{\log_2 T} \tilde{\Phi}_1(u,v)
\frac{\partial^2 h(u,v)}{\partial u\partial v}dudv\\
&+ O\left( \frac{1}{(\log T)^{\sigma}} \int_{\log|a|+\delta}^{\log_2 T} \int_{-\log_2T}^{\log_2T}
\left|\frac{\partial^2 h(u,v)}{\partial u\partial v}\right|dvdu\right).
\end{align*}
Note that
$$\left|\frac{\partial^2 h(u,v)}{\partial u\partial v}\right|= \left|\tmop{Re}\frac{\partial^2 g(u,v)}{\partial u\partial v} \right|\leq \left|\frac{\partial^2 g(u,v)}{\partial u\partial v}\right|\ll \frac{e^u}{|e^{u+iv}-a|^2},$$
and $|e^{u+iv}-a|^2= e^{2u}+|a|^2-2\tmop{Re}(a e^{u-iv})= (e^u-|a|)^2+2|a|e^u\big(1-\cos(v-\arg a)\big).$
We split the range of integration over $v$ into intervals $[-\pi+ 2\pi k+\arg a, \pi+2\pi k +\arg a]$ with 
$|k|\leq (\log_2 T)/\pi$. Since the integrand is non-negative, we deduce that 
\begin{align*}
 \int_{-\log_2T}^{\log_2T}
\left|\frac{\partial^2 h(u,v)}{\partial u\partial v}\right|dv 
&\leq e^{u}\sum_{|k|\leq \log_2T} \int_{-\pi+2\pi k+\arg a}^{\pi+2\pi k +\arg a} \frac{1}{(e^u-|a|)^2+2|a|e^u\big(1-\cos(v-\arg a)\big)}dv \\
&\ll e^u \log_2 T \int_0^{\pi} \frac{1}{(e^u-|a|)^2+2|a|e^u(1-\cos v)}dv,
\end{align*}
by a simple change of variable and since the integrand is an even function of $v$. Furthermore, using that 
$1-\cos v\geq v^2/10$ for $0\leq v\leq \pi$ we obtain that 
$$ \int_0^{\pi} \frac{1}{(e^u-|a|)^2+2|a|e^u(1-\cos v)}dv \leq \int_{0}^{\pi} 
\frac{1}{(e^u-|a|)^2+|a|e^u v^2/5}dv.$$
Now, by making the change of variable 
$$ y= \frac{\sqrt{|a|}e^{u/2}}{\sqrt{5}(e^u-|a|)}v,$$
we derive
$$ \int_{0}^{\pi} 
\frac{1}{(e^u-|a|)^2+|a|e^u v^2/5}dv\ll \frac{e^{-u/2}}{e^u-|a|}\int_0^{\infty}\frac{1}{1+y^2}dy \ll \frac{e^{-u/2}}{e^u-|a|}.
$$
Combining these estimates we deduce that
$$
\int_{\log|a|+\delta}^{\log_2 T} \int_{-\log_2T}^{\log_2T}
\left|\frac{\partial^2 h(u,v)}{\partial u\partial v}\right|dvdu\ll
 \log_2 T \int_{\log|a|+\delta}^{\log_2 T} \frac{e^{u/2}}{e^u-|a|}du\ll \log_2 T \int_{\delta}^{2\log_2 T} \frac{e^{x/2}}{e^x-1}dx
$$
by making the change of variable $x= u-\log|a|$ and since the integrand is positive. The lemma follows upon noting that
$$  
\int_{\delta}^{2\log_2 T} \frac{e^{x/2}}{e^x-1}dx \ll \int_{\delta}^1\frac{1}{x}dx + \int_1^{\log_2T} e^{-x/2}dx 
\ll \log_2T.
$$
\end{proof}
\subsection{Proofs of Theorems \ref{aux} and  \ref{apoint thm}}
\begin{proof}[Proof of Theorem \ref{aux}]
In view of Lemma \ref{initial} we only need to prove that
$$\frac{1}{T}\int_{t\in S(T) }\log|\zeta(\sigma+it)-a|dt= \ex\big(\mathbf{1}_{F}\cdot\log|\zeta(\sigma, X)-a|\big) + O \left(\frac{(\log_2 T)^2}{(\log T)^{\sigma}}\right).$$
Recall that $S(T)=S_1(T) \cup S_2(T)$ and $F=F_1 \cup F_2$. Combining the discrepancy estimate $$ \frac{\text{meas}(S_1(T))}{T}-\mathbb{P}(F_1)\ll (\log T)^{-\sigma}$$ with Lemmas \ref{IntParts}, \ref{ParInt1} and \ref{ParInt2}, we obtain
$$\frac{1}{T}\int_{t\in S_1(T) }\log|\zeta(\sigma+it)-a|dt= \ex\big(\mathbf{1}_{F_1}\cdot\log|\zeta(\sigma, X)-a|\big) + O \left(\frac{(\log_2 T)^2}{(\log T)^{\sigma}}\right).$$

Finally, using a similar approach one obtains
$$\frac{1}{T}\int_{t\in S_2(T) }\log|\zeta(\sigma+it)-a|dt=\ex\big(\mathbf{1}_{F_2}\cdot\log|\zeta(\sigma, X)-a|\big) + O \left(\frac{(\log_2 T)^2}{(\log T)^{\sigma}}\right),$$
where instead of Lemma \ref{IntParts} we use
\begin{align*}
 &\frac{1}{T}\int_{t\in S_2(T) }\log|\zeta(\sigma+it)-a|dt\\
=& \int_{-\log_2T}^{\log_2T}\int_{-\log_2 T}^{\log|a|-\delta} \Phi_2(u,v)
\frac{\partial^2 h(u,v)}{\partial u\partial v}dudv- \frac{\textup{meas}(S_2(T))}{T} h(-\log_2T,-\log_2T)\\
& + \frac{1}{T}\int_{t\in S_2(T) }\Big(h\big(-\log_2T, \arg\zeta(\sigma+it)\big)+h\big(\log|\zeta(\sigma+it)|,-\log_2T\big)\Big)dt,\\
\end{align*} 
with 
$$ \Phi_2(u,v)= \frac{1}{T} \text{meas}\{t\in S_2(T) : \log|\zeta(\sigma+it)|\geq u \text{ and } \arg\zeta(\sigma+it)\geq v\}.$$
\end{proof}

For the proof of Theorem \ref{apoint thm} we need an auxiliary lemma.
\begin{lemma}\label{Diff}
Let $a \neq 0$.
The function
$$
f_{a}(\sigma) := \mathbb{E}[\log |\zeta(\sigma,X) - a|]
$$
is twice differentiable in $\sigma$ for
$\tfrac 12 < \sigma < 1$. 
\end{lemma}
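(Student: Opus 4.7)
The plan is to express $f_a(\sigma)$ as an integral against the density of $\log\zeta(\sigma,X)$ on $\mathbb{R}^2$, and then differentiate twice under the integral sign. The density $\phi_\sigma$ exists by Fourier inversion: the joint characteristic function $\Phi_\sigma^{\textup{rand}}(u,v)$ decays super-polynomially in $(u,v)$, since the Bessel-function argument of Lemma \ref{FourierDecay}, applied to the auxiliary variable $|u-iv|$ via the identity $u\,\tmop{Re}(w)+v\,\tmop{Im}(w)=\tmop{Re}((u-iv)w)$, gives
\[
\Phi_\sigma^{\textup{rand}}(u,v) \ll \exp\!\left(-\frac{|u-iv|}{5\log(2+|u-iv|)}\right).
\]
Hence $\phi_\sigma\in C^\infty(\mathbb{R}^2)$ and $f_a(\sigma) = \int_{\mathbb{R}^2}\log|e^{x+iy}-a|\,\phi_\sigma(x,y)\,dx\,dy$.

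The next step is to show that $\phi_\sigma$ is $C^2$ in $\sigma\in(\tfrac12,1)$. I would differentiate the product representation of $\Phi_\sigma^{\textup{rand}}$ termwise: since $\sigma$ enters only through the bounded factors $p^{-\sigma}$, each derivative introduces only a factor $O(\log p)$ per prime, which after summation yields
\[
|\partial_\sigma^k\Phi_\sigma^{\textup{rand}}(u,v)| \ll_k (1+|u|+|v|)^{Ck}\,|\Phi_\sigma^{\textup{rand}}(u,v)|, \qquad k=1,2,
\]
uniformly on compact subsets of $(\tfrac12,1)$. Combined with the Fourier decay above, these bounds are integrable in $(u,v)$, so Fourier inversion gives $\partial_\sigma^k\phi_\sigma$ as continuous, bounded functions of $(x,y)$. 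The rapid decay of $\partial_\sigma^k\phi_\sigma$ in $(x,y)$ follows from Corollary \ref{EstimateRandom} applied to the derivatives, or equivalently from the fact that $\partial_\sigma^k\Phi_\sigma^{\textup{rand}}$ still extends analytically in $(u,v)$ to a tube of any width with similar Fourier decay.

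Finally, since $\log|e^{x+iy}-a|$ has only logarithmic (locally integrable) singularities at the points where $e^{x+iy}=a$ and grows at most linearly in $|x|$ at infinity, while $\partial_\sigma^2\phi_\sigma$ is bounded and super-polynomially decaying in $(x,y)$ uniformly for $\sigma$ on compacta, dominated convergence licenses two applications of $\partial_\sigma$ under the integral, yielding
\[
f_a''(\sigma) = \int_{\mathbb{R}^2}\log|e^{x+iy}-a|\,\partial_\sigma^2\phi_\sigma(x,y)\,dx\,dy,
\]
which is continuous in $\sigma$. The main technical obstacle will be the decay control in the second step: a priori, differentiation in $\sigma$ could spoil the delicate Bessel-function-driven Fourier decay of Lemma \ref{FourierDecay}. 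The key observation, which must be verified carefully, is that the extra $\log p$ factors produced by $\sigma$-differentiation contribute only polynomial corrections in $|u|+|v|$ that are easily absorbed by the super-polynomial decay already in hand; once that is established, the remaining interchanges are standard.
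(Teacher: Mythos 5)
The paper does not give a proof at all; Lemma \ref{Diff} is disposed of by citing Theorem 14 of Borchsenius--Jessen \cite{BorchseniusJessen}. Your proposal therefore necessarily goes a different route, namely Fourier inversion plus differentiation under the integral, which is a perfectly reasonable strategy. The setup, and the observation that $u\,\tmop{Re}(w)+v\,\tmop{Im}(w)=\tmop{Re}((u-iv)w)$ reduces the joint decay to the one-dimensional Bessel-function argument of Lemma \ref{FourierDecay}, are both correct.

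However, the intermediate inequality you state,
$$
|\partial_\sigma^k\Phi_\sigma^{\textup{rand}}(u,v)| \ll_k (1+|u|+|v|)^{Ck}\,|\Phi_\sigma^{\textup{rand}}(u,v)|,
$$
is false as written, and this is not a cosmetic defect. The characteristic function is an infinite product over primes, with the $p$-th factor asymptotic (for $p$ large relative to $|u-iv|$) to $J_0\bigl(|u-iv|/p^\sigma\bigr)$, and $J_0$ has real zeros. At a value of $(\sigma,u,v)$ where one of the factors vanishes, $\Phi_\sigma^{\textup{rand}}(u,v)=0$ while its $\sigma$-derivative generically does not, so no inequality of the form $|\partial_\sigma\Phi|\ll P(|u|,|v|)\,|\Phi|$ can hold. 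What you actually want (and can prove) is that $|\partial_\sigma^k\Phi_\sigma^{\textup{rand}}(u,v)|$ by itself still has super-polynomial decay in $(u,v)$, uniformly on $\sigma$-compacta: differentiating the product termwise replaces a single factor $\phi_q$ by $\phi_q'$, and the super-polynomial decay from the remaining primes in $[\sqrt{|u-iv|},|u-iv|/2]$ survives the removal of one factor. Even there one must be a little careful about the prime sum: the naive bound $|\phi_q'|\ll |u-iv|\,(\log q)/q^\sigma$ gives a divergent sum over $q$ when $\sigma<1$, and one needs the better estimate $\phi_q'(\sigma)=O(|u-iv|^2(\log q)/q^{2\sigma})$ valid for $q\gg|u-iv|^{1/\sigma}$ (coming from $J_0(x)=1-x^2/4+O(x^4)$) to make $\sum_q|\phi_q'|$ converge, while the finitely many small primes contribute only a polynomial in $|u-iv|$. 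Once the intermediate step is stated in terms of direct decay of $\partial_\sigma^k\Phi_\sigma^{\textup{rand}}$ rather than a ratio against $\Phi_\sigma^{\textup{rand}}$, the rest of your argument — rapid decay of $\partial_\sigma^k\phi_\sigma$, local integrability of the log singularity, dominated convergence — goes through.
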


\begin{proof}
See Theorem 14 of \cite{BorchseniusJessen}.
\end{proof}

\begin{proof}[Proof of Theorem \ref{apoint thm}]

Let $\frac12<\sigma<1$ and $\rho_a = \beta_a + i \gamma_a$ denote an $a$-point of $\zeta(s)$. We know that there is $\sigma_0=\sigma_0(a)$ such that $\beta_a<\sigma_0$ for all $a$-points $\rho_a$. By Littlewood's lemma
(see equation (9.9.1) of Titchmarsh \cite{Titchmarsh}), we have 
\begin{equation}\label{Littlewood}
\begin{aligned}
\int_{\sigma}^{\sigma_0}\bigg(\sum_{\substack{ \beta_a >u \\ T \leq \gamma_a \leq 2T}} 1\bigg) du
=& \frac{1}{2\pi} \int_{T}^{2T} \log |\zeta(\sigma + it) - a| dt - \frac{1}{2\pi} \int_{T}^{2T} \log |\zeta(\sigma_0 + it) - a| dt\\
&+\frac{1}{2\pi}\int_{\sigma}^{\sigma_0}
\Big(\arg\big(\zeta(\alpha+2iT)-a\big)-\arg\big(\zeta(\alpha+iT)-a\big)\Big)d\alpha. 
\end{aligned}
\end{equation}
Furthermore, a standard application of the argument principle shows that (see for example equation (8.4) of Tsang's Thesis \cite{Tsang}) 
$$ \int_{\sigma}^{\sigma_0}
\Big(\arg\big(\zeta(\alpha+2iT)-a\big)-\arg\big(\zeta(\alpha+iT)-a\big)\Big)d\alpha\ll_a \log T.
$$
Let $0<h<\min(\sigma-\frac12, 1-\sigma)$. Inserting this last estimate in equation \eqref{Littlewood} and using Theorem \ref{aux} we obtain
$$
\int_{\sigma}^{\sigma+h}\bigg(\sum_{\substack{ \beta_a >u \\ T \leq \gamma_a \leq 2T}} 1\bigg)  du=
\frac{T}{2\pi} \cdot \bigg(\mathbb{E} [ \log |\zeta(\sigma, X) - a|] - \mathbb{E} [ \log |\zeta(\sigma+h, X) - a|]\bigg)+ O \bigg (\frac{T (\log_2 T)^{2}}{(\log T)^{\sigma}} 
\bigg).
$$
Recall that $f_a(\sigma) = \mathbb{E} [\log |\zeta(\sigma, X) - a|]$ is twice differentiable in $\sigma$ by Lemma \ref{Diff}. Hence, we derive
\begin{align*}
\frac{1}{h} \int_{\sigma}^{\sigma + h} \bigg(\sum_{\substack{ \beta_a >u \\ T \leq \gamma_a \leq 2T}} 1\bigg) du & = \frac{T}{2\pi} \cdot \bigg ( \frac{f(\sigma) - f(\sigma + h)}{h} \bigg ) + O \bigg ( \frac{T (\log_2 T)^{2}}{
(\log T)^{\sigma}} \cdot \frac{1}{h} \bigg ) \\
& = - \frac{T}{2\pi} \cdot f'(\sigma) + O \bigg ( h T + \frac{T (\log_2 T)^{2}}{(\log T)^{\sigma}} \cdot \frac{1}{h} \bigg ).
\end{align*}
Therefore,
$$
\sum_{\substack{\beta_a \geq \sigma + h \\ T \leq \gamma_a \leq 2T}} 1 \leq
- \frac{T}{2\pi} \cdot f'(\sigma) +  O \bigg ( h T + \frac{T (\log_2 T)^{2}}{(\log T)^{\sigma}} \cdot \frac{1}{h} \bigg )
\leq \sum_{\substack{\beta_a \geq \sigma \\ T \leq \gamma_a \leq 2T}} 1.
$$
We substitute $\sigma - h$  for $\sigma$ and use $f'(\sigma - h) = f'(\sigma) + O(h)$ to conclude that also
$$
\sum_{\substack{\beta_a \geq \sigma \\ T \leq \gamma_{a} \leq 2T}} 1
\leq - \frac{ T}{2\pi} \cdot f'(\sigma) + O \bigg ( h T + \frac{T (\log_2 T)^{2}}{(\log T)^{\sigma}} \cdot \frac{1}{h} \bigg ).
$$
We pick $h = (\log_2 T) \cdot (\log T)^{-\sigma/2}$ to conclude that
$$
\sum_{\substack{\beta_a \geq \sigma \\ T \leq \gamma_a \leq 2T}} 1 = 
- \frac{T}{2\pi} \cdot f'(\sigma) + O \bigg ( \frac{T \log_2 T}{(\log T)^{\sigma/2}} \bigg ).
$$
From this the claim follows. 
\end{proof}

\section{Appendix: Lower bounds for the discrepancy}

According to \cite{Ivic},
$$
\int_{T}^{2T} |\zeta(\sigma + it)|^2 dt = \zeta(2\sigma) T
+ (2\pi)^{2\sigma - 1} \cdot \frac{\zeta(2 - 2\sigma)}{2 - 2\sigma}
\cdot ( 2^{2 - 2\sigma} - 1) T^{2 - 2\sigma} + O(T^{1 - \sigma}). 
$$
We notice that
$$
\mathbb{E}(|\zeta(\sigma,X)|^{2}) = \zeta(2\sigma).
$$
Therefore, if $D_{\sigma}(T) = O(T^{1 - 2\sigma - \delta})$ for some $\delta > 0$, then
by integration by parts
$$
\int_{T}^{2T} |\zeta(\sigma + it)|^2 dt = \zeta(2\sigma) T + O(T^{2 - 2\sigma - \delta})
$$
which contradicts the previous equation. Therefore $D_{\sigma}(T) = \Omega(T^{1 - 2\sigma - \varepsilon})$.
We notice that the term $T^{2 - 2\sigma}$ arises from the $\chi$ factors in the approximate
functional equation. Therefore the observed discrepancy $D_{\sigma}(T) = \Omega(T^{1 - 2\sigma - \varepsilon})$ ultimately arises because the probabilistic model $\zeta(\sigma, X)$ does not take into
account the $\chi$ factors in the approximate functional equation (or equivalently because
independence is ruined for the harmonics $n^{it}$ and $m^{it}$ with $n,m$ close to $T$).

As to the second assertion, if we have that $D_{\sigma}(T) = O(T^{1 - 2\sigma + \varepsilon})$, then
again an integration by parts shows that
$$
\int_{T}^{2T} \log |\zeta(\sigma + it)| dt = T \cdot \mathbb{E}[\log |\zeta(\sigma, X)|] + O(T^{2 - 2\sigma
+ \varepsilon}).
$$
Since $\log |\zeta(\sigma,X)|$ is symmetric we have $\mathbb{E}[\log |\zeta(\sigma, X)|] = 0$. 
By Littlewood's lemma we conclude that
$$
\sum_{\substack{\beta > \sigma \\ T  \leq \gamma \leq 2T}} (\beta - \sigma) = O(T^{2-2\sigma + \varepsilon}).
$$
From this it follows that the number of zeros of $\zeta(s)$ in the region $\beta > \sigma + \varepsilon$
is $\ll T^{2 - 2\sigma + \varepsilon}$ as desired. 

\section*{Acknowledgments}

We would like to thank Yoonbok Lee for discussions regarding
the generalization of our result to Epstein zeta-functions and linear
combinations of $L$-functions. 

\bibliographystyle{amsplain}
\bibliography{Paper}

\end{document}